\def\fddto{\xrightarrow{\textit{f.d.d.}}}
\newcommand{\ind}{{\bf 1}}
\def\inddd#1{{\ind}_{\left\{#1\right\}}} 
\newcommand{\proba}{\mathbb P}
\newcommand{\esp}{{\mathbb E}}
\newcommand{\argmin}{{\rm{argmin}}}
\newcommand{\eqnh}{\begin{eqnarray*}}
\newcommand{\eqne}{\end{eqnarray*}}
\newcommand{\eqnhn}{\begin{eqnarray}}
\newcommand{\eqnen}{\end{eqnarray}}
\newcommand{\equh}{\begin{equation}}
\newcommand{\eque}{\end{equation}} 
\def \ds {\displaystyle} 
\def\summ#1#2#3{\sum_{#1 = #2}^{#3}}
\def\prodd#1#2#3{\prod_{#1 = #2}^{#3}}
\newcommand{\eqd}{\stackrel{d}{=}}
\def\topp#1{^{(#1)}}
\def\abs#1{\left|#1\right|}
\def\sabs#1{|#1|} 
\def\ccbb#1{\left\{#1\right\}} 
\def\pp#1{\left(#1\right)}
\def\spp#1{(#1)}
\def\bb#1{\left[#1\right]}
\def\mmid{\;\middle\vert\;}
\def\floor#1{\left\lfloor #1 \right\rfloor}
\def\aa#1{\left\langle #1\right\rangle}
\def\vv#1{{\boldsymbol #1}}
\def\qmand{\quad\mbox{ and }\quad}
\def\qmwith{\quad\mbox{ with }\quad}
\def\mfa{\mbox{ for all }}
\def\wt#1{\widetilde{#1}}
\def\what#1{\widehat{#1}}
\def\limn{\lim_{n\to\infty}}
\def\weakto{\Rightarrow} 
\def\Z{{\mathbb Z}}
\def\R{{\mathbb R}}
\def\Rd{{\mathbb R^d}} 
\def\N{{\mathbb N}} 
\def\BB{{\mathbb B}}
\def \lb {\left(}
\def \rb {\right)} 
\def\qp#1{\lb #1;q\rb_\infty}
\def\qps#1{\lb #1;q\rb}
\def\qpp#1{\left(#1;q\right)_\infty}
\renewcommand{\i}{{\rm i}}
\renewcommand{\d}{{\rm d}}
\newcommand{\calH}{{\mathcal H}}
\newcommand{\calZ}{\mathcal{Z}} 
\def\<{\langle}
\def\>{\rangle} 
\def\p{\mathsf p} 
\def\q{{\mathsf q}}
\def\A{{\mathsf a}}
\def\C{{\mathsf c}}
\newcommand{\PhiB}{\Phi^{\BB}} 
\newcommand{\vvx}{{\vv x}}
\newcommand{\vvc}{{\vv c}}
\newcommand{\vvu}{{\vv u}}
\newcommand{\vvt}{{\vv t}}
\newcommand{\vvz}{{\vv z}} 
\def \lee {\left[}
\def \ree {\right]} 
\newcommand{\erfc}{{\rm erfc}}
\newcommand{\rmc}{{\rm c}}
\newcommand{\rmd}{{\rm d}}
\begin{document} 
\sloppy

\section{Introduction and main result}  

\subsection{Preface} 
Asymmetric simple exclusion processes with open boundaries (open ASEP) serve as  paradigmatic models 
for nonequilibrium systems with open boundaries and for Kardar--Parisi--Zhang (KPZ) universality. Over the past 50 years, extensive studies have been dedicated to understanding its stationary measures,
 encompassing a wide range of asymptotic 
  behaviors 
  concerning  particle densities, 
  fluctuations, correlation functions, and large deviations. See \citep{blythe07nonequilibrium,derrida06matrix,derrida07nonequilibrium,corwin22some} and more references therein for early developments. A significant portion of these studies is based on the powerful matrix product ansatz approach, first introduced by Derrida, Evans, Hakim, and Pasquier in 1993 \citep{derrida93exact}. This method is notably related to the Askey--Wilson polynomials \citep{uchiyama04asymmetric,corteel11tableaux} and processes \citep{bryc17asymmetric,wang24askey}. It is widely known that the asymptotic behaviors of open ASEP exhibit a phase diagram involving three phases: maximal current, low density, and high density. The phase diagram can also be divided into the fan region and the shock region, based on the behavior of the system after running for an intermediate length of time.

In this paper, we continue recent studies on the second-order limit fluctuations of stationary measures of open ASEP. These fluctuations are believed to exhibit universal phenomena in the KPZ class for nonequilibrium systems with open boundaries.  
The {\em open KPZ fixed point} is currently a conjectural object that plays a role analogous to the KPZ fixed point 
(see \citet{matetski21KPZ}) when restricted to an interval. 
The work by \citet{barraquand22steady} postulated the stationary measures of the (conjectural) open KPZ fixed point, which are expected to arise as the scaling limits of stationary measures of all models in the KPZ class on an interval. 
These postulated stationary measures of open KPZ fixed point 
 depend on two boundary parameters, $\A$ and $\C$ (both in $\R$), and can be defined as the sum of two independent processes 
 indexed by $t\in[0,1]$:
 $\BB+\eta\topp{\A,\C}$ where
 $\BB$ is a Brownian motion and $\eta\topp{\A,\C}$ is a stochastic process. The process $\eta\topp{\A,\C}$ has many representations: 
its law can be characterized as a change of measure of the law of a standard Brownian motion with respect to an explicit Radon--Nikodym derivative, or via explicit joint density formulas. 
In the cases when 
 $\A+\C\ge 0$ and $A_nC_n<1$ (corresponding to the fan region of open ASEP)
a recent work by \citet{bryc23asymmetric} established the convergence of the fluctuations of 
stationary measures of open ASEP
 to 
$\BB+\eta\topp{\A,\C}$.

The contribution of this paper is the extension of the limit theorem in \citep{bryc23asymmetric} from the fan region to the shock region, and hence provide a complete picture regarding the limit fluctuations for height function of stationary measures in the entire phase diagram.

\subsection{Asymmetric simple exclusion process with open boundaries}
An open ASEP
of size $n\in\N = \{1,2,\dots\}$
 is a continuous-time irreducible Markov process on state space $\{0,1\}^n$ with five parameters
\equh\label{eq:conditions open ASEP}
\alpha,\beta>0,\quad \gamma,\delta\geq 0,\quad 0\leq q<1,
\eque
which models the evolution of particles on the lattice $\left\{1,\dots,n\right\}$. Particles move 
to the left nearest neighbor with rate $q$ and to the  right nearest neighbor with rate $1$, if the target site is empty. Moreover, particles enter the system  from left at site $1$  with rate $\alpha$ and from right at site $n$ with rate $\delta$, provided that the site is empty. A particle at site $1$  leaves the system 
  with rate $\gamma$, and a particle at site $n$ leaves the system 
   with rate $\beta$. These jump rates are summarized in Figure \ref{fig:openASEP}.  When $q=\gamma=\delta=0$, particles can only move to the right, and the system is referred to as the totally asymmetric simple exclusion process with open boundaries (open TASEP).
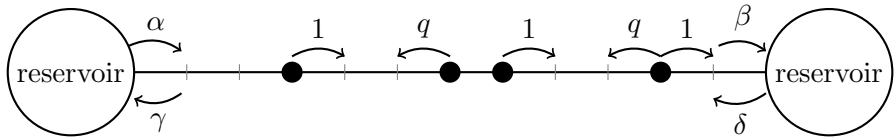
\begin{figure}[ht]
\centering
\begin{tikzpicture}[scale=0.7]
\draw[thick] (-1.2, 0) circle(1.2);
\draw (-1.2,0) node{reservoir};
\draw[thick] (0, 0) -- (12, 0);
\foreach \x in {1, ..., 12} {
	\draw[gray] (\x, 0.15) -- (\x, -0.15);
}
\draw[thick] (13.2,0) circle(1.2);
\draw(13.2,0) node{reservoir};
\fill[thick] (3, 0) circle(0.2);
\fill[thick] (6, 0) circle(0.2);
\fill[thick] (7, 0) circle(0.2);
\fill[thick] (10, 0) circle(0.2);
\draw[thick, ->] (3, 0.3)  to[bend left] node[midway, above]{$1$} (4, 0.3);
\draw[thick, ->] (6, 0.3)  to[bend right] node[midway, above]{$q$} (5, 0.3);
\draw[thick, ->] (7, 0.3) to[bend left] node[midway, above]{$1$} (8, 0.3);
\draw[thick, ->] (10, 0.3) to[bend left] node[midway, above]{$1$} (11, 0.3);
\draw[thick, ->] (10, 0.3) to[bend right] node[midway, above]{$q$} (9, 0.3);
\draw[thick, ->] (-0.1, 0.5) to[bend left] node[midway, above]{$\alpha$} (0.9, 0.4);
\draw[thick, <-] (0, -0.5) to[bend right] node[midway, below]{$\gamma$} (0.9, -0.4);
\draw[thick, ->] (12, -0.4) to[bend left] node[midway, below]{$\delta$} (11, -0.5);
\draw[thick, <-] (12, 0.4) to[bend right] node[midway, above]{$\beta$} (11.1, 0.5);
\end{tikzpicture}
\caption{Jump rates in the open ASEP.}
\label{fig:openASEP}
\end{figure}

Throughout the paper we assume condition \eqref{eq:conditions open ASEP} and work with the following parameterization. With
 \[
\kappa_{\pm}(x,y):=\frac{1}{2x}\pp{1-q-x+y\pm\sqrt{(1-q-x+y)^2+4xy}}, \quad \mbox{for }\; x>0\mbox{ and }y\geq 0,
\] 
we set
\equh\label{eq:defining ABCD}
A=\kappa_+(\beta,\delta),\quad B=\kappa_-(\beta,\delta),\quad C=\kappa_+(\alpha,\gamma),\quad D=\kappa_-(\alpha,\gamma).
\eque
One can check that \eqref{eq:defining ABCD} gives a bijection between \eqref{eq:conditions open ASEP} and 
\equh\label{eq:conditions qABCD}
A,C\geq0,\quad -1<B,D\leq 0,\quad 0\leq q<1.
\eque
We will assume \eqref{eq:conditions open ASEP} and consequently \eqref{eq:conditions qABCD} throughout the paper. 
The quantities 
$A/(1+A)$ and 
$1/(1+C)$
 have nice
physical interpretations as the `effective densities' near the left and right boundaries of the system.

It has been known since \citep{derrida93exact} that the phase diagram of open ASEP involves only two boundary parameters $A,C$, and exhibits three phases:
\begin{itemize}
        \item  (maximal current phase) $A<1$, $C<1$,
        \item  (high density phase) $A>1$, $A>C$,
        \item  (low density phase) $C>1$, $C>A$.
    \end{itemize}
The boundary $A=C>1$ between the high and low density phases is called the coexistence line.

There are also two regions on the phase diagram distinguished by \citep{derrida02exact,derrida03exact}:
    \begin{itemize}
        \item (fan region) $AC<1$,
        \item  (shock region) $AC>1$.
    \end{itemize}
See Figure \ref{Fig3} for an illustration.

 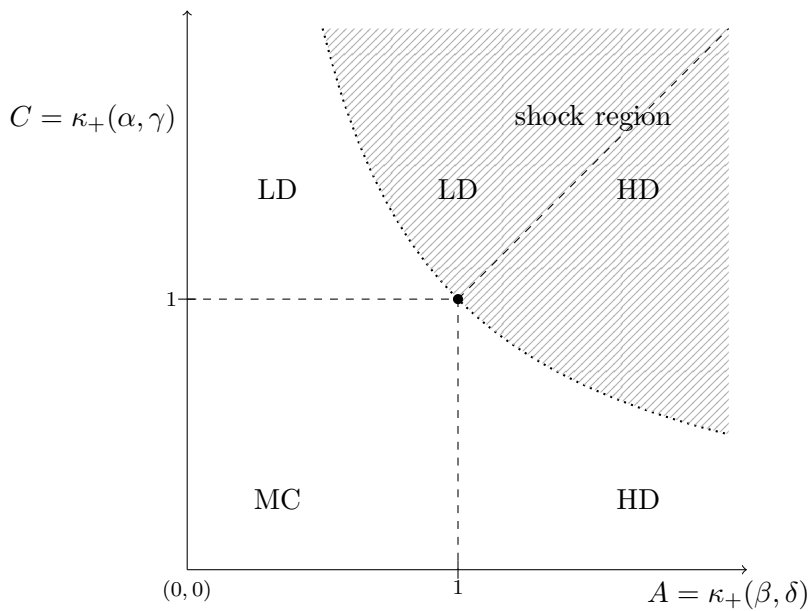
\begin{figure}[tb]
\centering
  \begin{tikzpicture}[scale=1]

\draw[scale = 1,domain=6.5:11,smooth,variable=\x,dotted,thick] plot ({\x},{1/((\x-7)*1/3+2/3)*3+5});

\fill[pattern=north east lines, pattern color=gray!60] (11,11)--(6.5,11) 
-- plot [domain=6.5:11]  ({\x},{1/((\x-7)*1/3+2/3)*3+5});

 \draw[->] (5,5) to (5,11.2);
 \draw[->] (5.,5) to (11.2,5);
   \draw[-, dashed] (5,8) to (8,8);
   \draw[-, dashed] (8,8) to (8,5);
   \draw[-, dashed] (8,8) to (11,11);
   \node [left] at (5,8) {\scriptsize$1$};
   \node[below] at (8,5) {\scriptsize $1$};
     \node [below] at (11,5) {$A = \kappa_+(\beta,\delta)$};
   \node [left] at (5,10) {$C = \kappa_+(\alpha,\gamma)$};

\node[above] at (9.5,9.75) {shock region};

  \draw[-] (8,4.9) to (8,5.1);
   \draw[-] (4.9,8) to (5.1,8);

 \node [below] at (5,5) {\scriptsize$(0,0)$};
    \node [above] at (6,9) {LD};
    \node[above] at (8,9) {LD};
    \node [below] at (10,6) {HD};  
      \node [above] at (10,9) {HD};

 \node [below] at (6,6) {MC}; 

\draw [fill=black] (8,8) circle [radius=0.05];
\end{tikzpicture}
\caption{
Phase        diagram for the open ASEP with maximal current (MC), low density (LD), high density (HD) phases, and with shaded  shock region $AC>1$.
Under Assumption \ref{assump:0} 
and $\A+\C<0$, parameters converge from within the shock region to the triple point (1,1) as $n\to\infty$.
}\label{Fig3}
\end{figure}

\subsection{Main Result}\label{subsec:main results}
Now we explain the setup of our limit theorem. We consider a sequence of open ASEP, each with size $n\in\N$, 
fixed $q\in[0,1)$ and changing parameters $A_n, B_n, C_n,D_n$ (that is, the parameters $\alpha,\beta,\gamma,\delta$ 
now depend on $n$, and we use the transformation \eqref{eq:defining ABCD}). 
We will be interested  the convergence to the ``triple point” in the phase diagram, 
that is, when 
$(A_n,C_n)\to (1,1)$, the point
at the intersection of three phases for the open ASEP. For parameters $B_n$ and $D_n$ we will assume that they converge respectively to constants $B$ and $D$ in $(-1,0]$.
We shall impose the following assumptions.
\begin{assumption}\label{assump:0}
We assume $A_n,C_n\ge 0$, $B_n,D_n\in(-1,0]$ for all $n\geq 1$,  
\begin{align*}
\limn A_n = 1 & \mbox{ with }  \limn \sqrt n(1-A_n) = \A\in
(-\infty,\infty),  
\\
\limn C_n = 1& \mbox{ with }  \limn \sqrt n(1-C_n) = \C\in (-\infty,\infty).  
\end{align*}
We also assume that
\[ 
    \limn B_n=B\in(-1,0]  \qmand   \limn D_n=D\in(-1,0].
\] 
\end{assumption}

\begin{remark}
Under Assumption \ref{assump:0}, we observe that when $\A+\C > 0$, we have $A_nC_n < 1$ for sufficiently large $n$, i.e., the open ASEP is in the fan region. Similarly, when $\A+\C < 0$, we have $A_nC_n > 1$ for sufficiently large $n$, i.e., the open ASEP is in the shock region. When $\A+\C = 0$, $A_nC_n$ may fluctuate around $1$ for sufficiently large $n$, meaning the open ASEP can alternate between the fan and shock regions.
\end{remark}

For each $n$ fixed, consider open ASEP with parameters $A_n,B_n,C_n,D_n,q$ as above. There exists a unique stationary measure, denoted by $\mu_n$, of the open ASEP on the configuration space $\{0,1\}^n$, and we let $(\tau_{n,1},\dots,\tau_{n,n})\in\{0,1\}^n$ denote a random vector with law $\mu_n$. Consider the (centered) height function:
\equh\label{eq:h_n}
h_n(x):=\summ k1{\floor{nx}}(2\tau_{n,k}-1), \quad x\in[0,1].
\eque

Next, we introduce the stochastic processes that arise in the limit. 
For all $\A,\C\in\R$, we let $\eta\topp{\A,\C}= (\eta\topp{\A,\C}_t)_{t\in[0,1]}$
denote a stochastic process 
such that 
 its law on $C([0,1])$, denoted by $\mathbb{P}_{\eta\topp{\A,\C}}$, is determined by the following Radon--Nikodym representation:
\equh\label{eq:RN1}
\frac{\d \mathbb{P}_{\eta\topp{\A,\C}}}{\d \mathbb{P}_{\rm Bm}}(\vv\omega) = \frac{\ds e^{(\A+\C)\min_{t\in[0,1]}\omega_t - \A\omega_1}}{\ds \esp_{\rm Bm}\lee e^{(\A+\C)\min_{t\in[0,1]}\omega_t - \A\omega_1}\ree},\quad \vv\omega= \lb\omega_t \rb_{t\in[0,1]}\in C([0,1]),
\eque
where $\mathbb{P}_{\rm Bm}$ denotes the law on $C([0,1])$ of a standard Brownian motion, and $\mathbb{E}_{\rm Bm}$ denotes the expectation with respect to $\mathbb{P}_{\rm Bm}$.
The normalization constant above
can be evaluated explicitly (see Appendix \ref{app:normalization}):
\equh\label{eq:calH}
\mathbb{E}_{\rm Bm}\lee e^{(\A+\C)\min_{t\in[0,1]}\omega_t - \A\omega_1}\ree=\calH(\A,\C):= \begin{cases}
\displaystyle \frac{\A H(\A/\sqrt 2)-\C H(\C/\sqrt 2)}{\A-\C}, & \mbox{ if } \A\ne \C,\\\\
\displaystyle (1+\A^2)H\pp{\frac {\A}{\sqrt 2}}-\sqrt{\frac2\pi}\A, & \mbox{ if } \A=\C,
\end{cases}
\eque
where we denote: 
\[ 
H(x) =e^{x^2}\erfc(x) \qmwith \erfc (x) = \frac 2{\sqrt\pi}\int_x^\infty e^{-t^2}\d t,\quad x\in\R.
\]

The following theorem provides a complete picture of limit fluctuations under Assumption~\ref{assump:0}. 
Our contribution here is the proof for the case $\A+\C<0$, corresponding to the shock region.
\begin{theorem}\label{thm:1}
Under Assumption \ref{assump:0},
with $\A,\C\in\R$,
we have 
\equh\label{eq:fdd}
\frac1{\sqrt n}\lb h_n(x)\rb_{x\in[0,1]}\weakto \frac1{\sqrt 2}\lb\BB_x + \eta\topp{\A/\sqrt 2,\C/\sqrt 2}_x\rb_{x\in[0,1]}\quad\mbox{ as }n\to\infty
\eque
as processes in $D([0,1])$, the space of c\`adl\`ag functions with Skorokhod topology \citep{billingsley99convergence}.
Here $\BB$ is a standard Brownian motion independent from the process $\eta\topp{\A/\sqrt 2,\C/\sqrt 2}$.
\end{theorem}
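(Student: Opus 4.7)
Since the fan region case $\A+\C \ge 0$ is already covered by \citet{bryc23asymmetric}, only the shock region $\A+\C<0$ demands a new argument. The plan is threefold: (i) establish finite-dimensional convergence of $n^{-1/2}h_n$ by computing joint Laplace transforms via the Askey--Wilson signed-measure representation of \citet{wang24askey}; (ii) identify the resulting limit with the Laplace transform of $(\BB+\eta\topp{\A/\sqrt 2,\C/\sqrt 2})/\sqrt 2$ through a new duality formula for the stationary measure of the open KPZ fixed point in the shock region, derived from the two-parameter identity of \citet{bryc24two}; and (iii) promote finite-dimensional to process convergence in $D([0,1])$ via tightness.

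For stage (i), the matrix product ansatz represents, for test points $0 \le x_1 < \cdots < x_d \le 1$ and test exponents, the joint Laplace transform of $(h_n(x_i)/\sqrt n)_{i=1}^d$ as an integral against an Askey--Wilson signed measure $\nu_n$ indexed by $A_n,B_n,C_n,D_n,q$. Under Assumption~\ref{assump:0} with $\A+\C<0$, the critical scaling of $A_n,C_n$ near $1$ activates atomic components of $\nu_n$ that are absent in the fan region. I would localize the integral in the appropriate $1/\sqrt n$-neighborhoods of the boundary and rescale the integration variables; the absolutely continuous part produces a Gaussian density in the limit, while the atoms contribute factors involving $\min_{t\in[0,1]}\omega_t$ and $\omega_1$. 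The analysis is considerably more delicate than its fan-region counterpart because the continuous and atomic parts of $\nu_n$ contribute at the same order and partially cancel, so the limit must be extracted by careful uniform integrability and dominated convergence arguments.

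Stage (ii) is the conceptual core of the proof and where I expect the main obstacle to lie. In the fan region, \citep{bryc23asymmetric} identifies the candidate limit as $(\BB+\eta\topp{\A/\sqrt 2,\C/\sqrt 2})/\sqrt 2$ by a direct Brownian change-of-measure argument based on \eqref{eq:RN1}; this route fails in the shock region, because the Askey--Wilson asymptotics output Gaussian functionals that do not align with the change-of-measure template of \eqref{eq:RN1}. I would instead establish a new duality formula expressing the joint Laplace transform of $\eta\topp{\A,\C}$ at finitely many points in terms of a Brownian functional compatible with the output of stage (i); the two-point identity of \citet{bryc24two} is the key input. The normalization constant $\calH(\A/\sqrt 2,\C/\sqrt 2)$ from \eqref{eq:calH} is expected to emerge naturally as the ratio of the two Gaussian expressions on either side of the duality.

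Finally, tightness in $D([0,1])$ would follow from uniform second-moment bounds on increments $n^{-1/2}(h_n(y)-h_n(x))$ extractable from the same Askey--Wilson representation; since $h_n$ is a sum of $\pm 1$-valued steps, the standard Billingsley criterion for $D([0,1])$ applies. Combined with the finite-dimensional convergence from stages (i)--(ii), this yields \eqref{eq:fdd}.
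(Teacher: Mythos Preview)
Your three-stage outline is broadly aligned with the paper's argument, but two of the stages misidentify the actual mechanism, and one of them is a genuine gap.

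\textbf{Stage (ii), the duality formula.} You describe this as deriving a new duality identity for the Laplace transform of $\eta\topp{\A,\C}$ using a ``two-point identity'' from \citet{bryc24two}. That is not what happens, and the paper is explicit that a direct proof of the duality formula eludes them except when $d=1$. The actual argument is \emph{soft}: after stage~(i), the limit Laplace transform is an explicit expression $\Phi^{\BB}(\vvx,\vvc)\cdot\calH(\A/\sqrt2,\C/\sqrt2)^{-1}\sum_{\ell=0}^d\Phi_{d,\ell}(\vvx,\vvc)$ that depends on $\A,\C$ \emph{only}, not on $B_n,D_n,q$. Now \citet{bryc24two} already proved the full Skorokhod convergence \eqref{eq:fdd} for open TASEP (the special choice $q=0$, $B_n=D_n=0$, $A_n=e^{-\A/\sqrt n}$, $C_n=e^{-\C/\sqrt n}$). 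Running stage~(i) on this TASEP sequence yields the same expression $\sum_\ell\Phi_{d,\ell}$, and since we independently know the limit for TASEP, the expression must equal the target Laplace transform. This identifies the limit for \emph{all} parameter sequences satisfying Assumption~\ref{assump:0}. The result from \citet{bryc24two} used here is their full convergence theorem (via a two-\emph{line} representation of the TASEP stationary measure), not a two-point identity. Your description of ``a new duality formula \ldots\ derived from'' some identity misses this indirection, which is the conceptual core of the paper's contribution.

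\textbf{Stage (iii), tightness.} You propose extracting second-moment increment bounds from the Askey--Wilson representation. This is not what the paper does, and it is unclear that it would work: in the shock region the Askey--Wilson measures are signed, so turning the integral representation into moment bounds is not straightforward. The paper instead uses the stochastic-sandwiching coupling of \citet{gantert23mixing} (as adapted in \citep[Proposition~2.4]{bryc23asymmetric}): one couples $(\tau_{n,j})$ between two i.i.d.\ Bernoulli sequences with parameters tending to $1/2$, and tightness of the sandwiched height function follows from Donsker's theorem for the outer sequences. The paper remarks explicitly that this argument is independent of the Askey--Wilson machinery. Your proposed route is at best a different and harder approach; as stated it is a gap.

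A minor point on stage~(i): the continuous and atomic parts do not ``partially cancel''; rather, the integral decomposes over trajectories that either start on the atom and later jump to the continuous domain, or stay continuous throughout, giving the sum $\sum_{\ell=0}^d\Phi_{d,\ell}$ indexed by the jump time. Also, the analysis first excludes the cases $\A=\C$ and $\A\C=0$ (needed for the integral representation of the denominator $Z_n$), and these are recovered at the end by a continuity/sandwiching argument.
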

In the fan region $\A+\C>0$ and also when $\A+\C=0$, 
\eqref{eq:fdd} was established in \citep{bryc23asymmetric}. 
 Here we use a slightly different parameterization, and our $\eta\topp{\A/\sqrt 2,\C/\sqrt 2}$ here is $\eta\topp{\A,\C}$ therein. In fact, $\A = \infty$ and/or $\C = \infty$ was also investigated therein (with a limit theorem established for convergence of finite-dimensional distributions only); see Remark \ref{rem:ABCD} for a discussion of these cases. 
Note also that in \citep{bryc23asymmetric}, when $\A+\C = 0$ the proof is restricted to the fan region ($A_nC_n<1$) although the same argument also works for the shock region ($A_nC_n>1$). 
For the entire phase diagram $\A,\C\in\R$ but restricted to open TASEP ($q=\gamma_n=\delta_n=0$), Theorem \ref{thm:1} was proved by \citet{bryc24two}; see Remark \ref{rmk:limit process} for more details. Our method is in the same spirit as the one in \citep{bryc23asymmetric}, but completely different from the one in \citep{bryc24two}.

The main technique in the proof is an integral representation of the joint generating function of the stationary measure $\mu_n$ of open ASEP; see Section \ref{sec:AWP}. 
This representation, introduced by \citet{bryc17asymmetric} (based on earlier works 
 \citep{derrida93exact, uchiyama04asymmetric, bryc10askey}), was originally restricted to the fan region only. It is worth noting that in this region, the integral representation has a probabilistic interpretation as the expectation of a functional of the so-called Askey–Wilson Markov process, introduced by \citet{bryc10askey}. Recently, 
this representation was extended in \citep{wang24askey}
 to the shock region, involving the so-called Askey–Wilson signed measures. Although the probabilistic interpretation is no longer valid in the shock region, the asymptotic analysis remains similar. 
 
 In particular, this integral representation yields a 
  formula for the Laplace transform of the finite-dimensional distribution of the height function:

\begin{equation}\label{eq:Laplace transform intro} 
\esp\lee\exp\pp{-\summ k1d \frac{c_k}{\sqrt{n}}  h_n(x_k) }\ree,\quad c_1,\dots,c_d\in\R,
\end{equation}
and this formula is convenient for asymptotic analysis (see Theorem \ref{thm:AW rep}). 

The majority of the proof is devoted to calculating the limit Laplace transform described above, which is provided in Section \ref{sec:proof}. The analysis becomes more involved than in the fan region \citep{bryc23asymmetric}, and consequently the limit of equation \eqref{eq:Laplace transform intro} is much more complicated.  
In particular, the limit expression of \eqref{eq:Laplace transform intro} cannot be read immediately as the Laplace transform of the corresponding finite-dimensional distribution of the limit process on the right-hand side of \eqref{eq:fdd}. 
This step, summarized in Proposition \ref{prop:duality},
 is referred to as the establishment of the so-called {\em duality formula} for the Laplace transforms of Markov processes (see, for example, \citep{bryc23markov, bryc23asymmetric} and Remark \ref{rem:duality}).
 Unexpectedly, unlike in several other cases we have encountered in the past, we are unable to prove 
the duality formula in this case 
 by a direct calculation,
  except when $d=1$ 
  (see Proposition \ref{prop:d=1}).
   Instead, we provide a soft argument 
   for Proposition \ref{prop:duality} that relies crucially on the recent development by \citet{bryc24two}. 
Further comments regarding the duality formula can be found in Appendix \ref{sec:duality}. 

At last, for tightness it suffices to adapt a coupling argument in \citep[Proposition 2.4]{bryc23asymmetric}. This coupling argument is quite general and is independent from the approach by Askey--Wilson signed measures.

We conclude the introduction with a few remarks.

\begin{remark}\label{rmk:limit process}
The
process $\eta\topp{\A,\C}$ was introduced by \citet{barraquand22steady}. It was predicted therein that the right-hand side of \eqref{eq:fdd} is the stationary measure of the (conjectural) open KPZ fixed point, 
 and that it arises as the scaling limit of stationary measures of all models in the KPZ universality class with two open boundaries. We will present two alternative representations of the process $\eta^{(\A,\C)}$ in Section \ref{sec:eta}. In particular, Proposition \ref{prop:two line} demonstrates that for the two processes:
\[
\pp{ \frac1{\sqrt 2}\pp{\eta_x\topp{\A/\sqrt 2,\C/\sqrt 2}+\BB_x}_{x\in[0,1]}, \frac1{\sqrt 2}\pp{\eta_x\topp{\A/\sqrt2,\C/\sqrt2}-\BB_x}_{x\in[0,1]}},
\]
their joint law, a probability measure on $C([0,1])^2$, also has a representation in terms of a Radon--Nikodym derivative with respect to the law of two independent Brownian motions (see \eqref{eq:RN2}). This representation is often referred to as a two-line/two-layer
 representation. 
The discrete counterpart of the two-line representation was recently established for open TASEP by \citet{bryc24two}, and it led, via a much simpler proof, to Theorem \ref{thm:1} restricted to open TASEP ($\A,\C\in\R$, $q=\gamma_n=\delta_n = 0$). 
   At this moment, it is not clear to us whether the two-line representation of open TASEP can be extended to open ASEP with full range of parameters, while the methods in \citep{bryc23asymmetric} and the present paper (based on the Askey--Wilson polynomials) allow us to prove limit theorems for all legitimate parameters.  
   The representation in \citep{bryc24two} was actually inspired by another very recent result on some integrable polymer models on a diagonal strip by \citet{barraquand24stationary}, where a two-line representation was introduced and played a key role in showing the convergence of  stationary measures of these models to the stationary measures of open KPZ fixed point and of open KPZ equation. 
\end{remark}

\begin{remark}
Limit fluctuations for stationary measures of open ASEP were first studied with all parameters $A,B,C,D,q$ fixed and with system size $n\to\infty$. The first result was due to \citet{derrida04asymmetric} who investigated a restricted area within the fan region. 
The complete description of limit fluctuations of in all three phases 
except on the coexistence line
has been provided by \citep{bryc19limit} (fan region) and \citep{wang24askey} (shock region) together, and we refer to \citep{wang24askey} for more details. 
 \end{remark}

\begin{remark}\label{rem:ABCD}With $A_n\to 1$ and $C_n\to 1$ at appropriate rates and $q\in[0,1)$ fixed, the limit theorem in \citep{bryc23asymmetric} in the fan region also allows $\A = \infty$ and/or $\C = \infty$. The process in the limit $\eta\topp{\A,\infty}$ is a randomized Brownian meander when $\A\in\R$ (a standard Brownian meander when $\A = 0$) and a Brownian excursion when $\A = \infty$. For our case, with $\A+\C<0$, we choose to not include the case that $\A$ and/or $\C = -\infty$.
In these cases, although the centering in the definition of the height function $h_n$ given by \eqref{eq:h_n} may have to be modified accordingly for the limit theorem to hold, we expect  
the limit fluctuation to be a
 Brownian motion, as long as the parameters remain off the coexistence line. More precisely, we expect the right-hand side of \eqref{eq:fdd} to be (a scalar multiple of) a Brownian motion; in particular, the process $\eta$ does not appear in the limit.  

The constraints that $B_n,D_n, B,D>-1$ is for convenience only. The parameters $B_n$ and $D_n$ were allowed to approach $-1$ (with controlled rates of convergence) in the studies in \citep{bryc23asymmetric} of fan region, and the proof remained essentially the same except that there were a few more terms to be taken care of in the intermediate steps only. It is well understood that the parameters $B_n$ and $D_n$ do not have any influence in the limit. Our result remains to hold with the same limits, if $B,D=-1$ are allowed.
\end{remark}

\begin{remark}
When allowing all the five parameters $A,B,C,D,q$ to change appropriately as $n\to\infty$, and in particular, when $A_n\to1, C_n\to 1$, and $q_n\to 1$ at appropriate rates, it is proven by \cite[Theorem 1.2 (1)]{corwin24stationary} (which builds on earlier works \cite{corwin18open,parekh19KPZ}) that the limiting stationary measures of open ASEP become the stationary measures of   the {\em open KPZ equation with Neumann type boundary conditions}. The open KPZ equation, introduced by \citet{corwin18open}, serves as the counterpart to the KPZ equation introduced by \citet{hairer13solving} (defined on the full line) when restricted to an interval. 
This limit regime is technically the most involved case, and again only the fan region has been studied. The groundbreaking work in this setup is due to \citet{corwin24stationary} who first computed the limit Laplace transform of stationary measures. Their results were later complemented by \citet{barraquand22steady} and \citet{bryc23markov} who provided more accessible descriptions of the stationary measures (see also \citet{bryc22markov} and a nice survey paper by \citet{corwin22some}). Most recently, \citet{himwich24convergence} showed that the assumptions in \citep{corwin24stationary} can be relaxed (but always restricted to the fan region).
We believe that the method developed in \citep{wang24askey} can be adapted to attack the shock region in this setup. This is left for a future study.  
\end{remark}

\subsection{Outline of the rest of the paper} 
Section \ref{sec:AWP}  reviews Askey--Wilson signed measures and the integral representation of open ASEP. 
Section \ref{sec:eta} presents two alternative representations of $\eta\topp{\A,\C}$, along with the fact that its Laplace transform is continuous in $\A,\C$. Section \ref{sec:proof} provides the proof of Theorem \ref{thm:1}.

\section{Askey--Wilson signed measures and open ASEP}\label{sec:AWP}
In this section we review the background
 of the Askey--Wilson signed measures, 
and their role in the analysis of stationary measures of open ASEP.
More details can be found in \citep{bryc10askey,bryc17asymmetric,wang24askey}. 

Askey--Wilson signed measures depend on five parameters $a,b,c,d,q$, under the following assumptions. 
\begin{assumption}\label{assump:AW}
Suppose $q\in[0,1)$, and $a,b,c,d\in\mathbb C$ are such that
\begin{enumerate}[(i)]
\item $abcd\notin\{q^{-\ell}:\ell\in\N_0\}$, where we denote $\N_0=\{0,1,2,\dots\}$,
\item for any two distinct $\mathfrak e,\mathfrak f\in\{a,b,c,d\}$ such that $|\mathfrak e|, |\mathfrak f|\ge 1$, we have $\mathfrak e/\mathfrak f\notin\{q^\ell:\ell\in\Z\}$,
\item $a,b$ are real, and $c,d$ are either real or form a complex conjugate pair, and $ab<1, cd<1$. \end{enumerate}
\end{assumption}
We write 
\[
(a,b,c,d)\in\Omega_q ,
\]
 if $a,b,c,d,q$ satisfy the above assumption. In such a case, we let $\nu(\d y;a,b,c,d,q)$ denote the Askey--Wilson signed measure. This is a finite signed measure of total mass one and with compact support in $\R$, which is of mixed type
 \[
\nu(\d y;a,b,c,d,q)=f(y;a,b,c,d,q)\d y+\sum_{z\in F(a,b,c,d,q)}p(z)\delta_z(\d y).
\]
The absolutely continuous part is supported on $[-1,1]$ and the discrete part is supported on a finite or empty set $F=F(a,b,c,d,q)$.
For certain choices of parameters, the measure can be only discrete or only continuous.
The density function of the continuous part of $\nu(\d y;a,b,c,d,q)$ is  
\begin{equation}\label{eq:f}
  f(y;a,b,c,d,q)=\frac{\ds\qp{q,ab,ac,ad,bc,bd,cd}}{\ds2\pi\qp{abcd}\sqrt{1-y^2}}\,\left|\frac{\ds\qp{e^{2i\theta_y}}}
{\ds\qp{ae^{i\theta_y},be^{i\theta_y},ce^{i\theta_y},de^{i\theta_y}}}\right|^2\inddd{|y|< 1},
\end{equation}
where $\theta_y$ is such that $y=\cos\,\theta_y$.
Here and below, for $\alpha\in\mathbb C$,  $n\in\N_0\cup\{\infty\}$ and $|q|<1$ we
use the
$q$-Pochhammer symbol:
\begin{equation*}
\qps{\alpha}_n=\prod_{j=0}^{n-1}\,(1-\alpha q^j), \quad \qps{a_1,\cdots,a_k}_n =\prodd j1k\qps{a_j}_n.
\end{equation*}
The set $F=F(a,b,c,d,q)$  of atoms of $\nu(\d y;a,b,c,d,q)$ is   non-empty if  there is a real parameter $\mathfrak e\in\{a,b,c,d\}$  with
$|\mathfrak e|\geq 1$, and in this case $F$ contains the following atoms:
\[
  y\topp{\mathfrak e}_j=\frac12\pp{\mathfrak eq^j+\frac1{\mathfrak eq^j}} \mbox{ for $j=0,1,\dots$ such that $|\mathfrak eq^j|\ge 1$}.
\]
We refer to all these  as the {\em atoms generated by $\mathfrak e$}.

The corresponding masses on the atoms are, if $\mathfrak e = a$,
\[
    \begin{split}
        p\lb y_0\topp a;a,b,c,d,q\rb& =\frac{\qp{a^{-2},bc,bd,cd}}{\qp{b/a,c/a,d/a,abcd}},\;  
 \\
p\lb y_j\topp a;a,b,c,d,q\rb& =p(y_0;a,b,c,d,q)\frac{\ds\qps{a^2,ab,ac,ad}_j\,(1-a^2q^{2j})}{\ds\qps{q,qa/b,qa/c,qa/d}_j(1-a^2)}\left(\frac{\ds q}{\ds abcd}\right)^j,\quad
j\ge 1, 
    \end{split}
\]
and similar formulas with $a$ and $\mathfrak e$ swapped when $\mathfrak e\in\{b,c,d\}$.  Note that the possible number of parameters $a,b,c,d$ that generate atoms are $0, 1,2$ (see the 
Condition
 (iii) of Assumption \ref{assump:AW}). 
We also mention that the
expression for
$p(y_j\topp a;a,b,c,d,q)$  given here only applies for $a,b,c,d\ne 0$, and takes a different form otherwise. 
See \citep{wang24askey} for more details.

Next, with $(A,B,C,D,q)$ fixed we shall work with certain Askey--Wilson signed measures denoted by $\pi_t(\d x)$ and $P_{s,t}(x,\d y)$ for suitably chosen $s,t>0$. The choice of $s,t$ is delicate in the case of $AC>1$, which we summarize in the following lemma.
\begin{lemma}\label{lem:time interval}
    Assume $A,B,C,D,q$ satisfy \eqref{eq:conditions qABCD}. If $AC<1$, set $\mathbb T = \mathbb T(A,B,C,D,q) := (0,\infty)$. If $AC>1$, assume further $ABCD\notin\{q^{-\ell}:\ell\in\N_0\}$ and set
\[\mathbb{T}=\mathbb{T}(A,B,C,D,q):=\lb\max
    \ccbb{\sqrt q,D^2}
    ,\min
    \ccbb{\frac1{\sqrt q},\frac1{B^2}}
    \rb\setminus\ccbb{\frac CAq^{\ell}:\ell\in\Z}.
    \]
    \begin{enumerate}[(i)]
    \item
    For all $t\in\mathbb{T}$ we have 
\[
\lb A\sqrt t,B\sqrt t, \frac C{\sqrt t}, \frac D{\sqrt t}\rb  \in\Omega_q,
\]
and we set
\[ 
\pi_t(\d y) \equiv \pi_t\topp{A,B,C,D,q}(\d y):=\nu\pp{\d y;A\sqrt t,B\sqrt t, \frac C{\sqrt t}, \frac D{\sqrt t},q},\quad t\in\mathbb{T}.
\] 
Note that the support of $\pi_t$ is 
\[
\mathbb U_t\equiv \mathbb U_t(A,B,C,D,q):= [-1,1]\cup F\pp{A\sqrt t,B\sqrt t, \frac C{\sqrt t}, \frac D{\sqrt t},q}. 
\]
\item Moreover, for all $s,t\in\mathbb T$, $s<t$ and $x\in\mathbb{U}_s$, we have
\begin{equation}\label{eq:transition well defined}
\lb A\sqrt{t},B\sqrt t,\sqrt{\frac st}\lb x+\sqrt{x^2-1}\rb,\sqrt{\frac st}\lb x-\sqrt{x^2-1}\rb\rb\in\Omega_q.
\end{equation}
    \end{enumerate}

    \end{lemma}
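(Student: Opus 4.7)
The plan is to verify the three conditions of Assumption~\ref{assump:AW} defining $\Omega_q$ for the two tuples in parts (i) and (ii). Reality and the complex-conjugate clause are immediate from $A,C\ge 0$, $B,D\in(-1,0]$ and, for part (ii), from the identity $c'd'=s/t$: when $x\in[-1,1]$ the pair $c',d'=\sqrt{s/t}(x\pm\sqrt{x^2-1})$ is a complex conjugate pair, while when $x$ is an atom of $\pi_s$ the pair is real. The product bounds $ab<1$, $cd<1$ reduce to $ABt\le 0<1$ and $CD/t\le 0<1$ in part (i), and to $c'd'=s/t<1$ in part (ii). The condition $abcd\notin\{q^{-\ell}:\ell\in\N_0\}$ becomes $ABCD\notin\{q^{-\ell}:\ell\in\N_0\}$ in part (i) (imposed as a hypothesis when $AC>1$, automatic when $AC<1$ via $|B|,|D|<1\Rightarrow|ABCD|<1$) and $ABs\notin\{q^{-\ell}:\ell\in\N_0\}$ in part (ii) (automatic since $ABs\le 0$).

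The substance lies in the ratio condition: for distinct parameters of modulus $\ge 1$, no ratio lies in $q^\Z$. The sign pattern $A,C\ge 0$ and $B,D\le 0$ renders every cross-sign ratio non-positive, hence not in $q^\Z$. For part (i), the only remaining same-sign ratio of concern is $A\sqrt t/(C/\sqrt t)=At/C$, since the analogous $B\sqrt t/(D/\sqrt t)=Bt/D$ is ruled out: $|BD|<1$ prevents both from having modulus $\ge 1$. In the $AC<1$ case, simultaneous $|A\sqrt t|,|C/\sqrt t|\ge 1$ would force $AC\ge 1$, so the check is vacuous. In the $AC>1$ case, the bounds $t<1/B^2$ and $t>D^2$ force $|B\sqrt t|,|D/\sqrt t|<1$, so $B\sqrt t$ and $D/\sqrt t$ drop out of the check altogether, and the exclusion $t\notin\{Cq^\ell/A:\ell\in\Z\}$ is precisely the statement $At/C\ne q^\ell$.

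For part (ii), the pair $c',d'$ is analyzed via the support of $\pi_s$. If $x\in[-1,1]$, $|c'|=|d'|=\sqrt{s/t}<1$ and the check is vacuous. If $x$ is an atom generated by a parameter $\mathfrak e$ of $\pi_s$ with $|\mathfrak e|\ge 1$, then $\{c',d'\}=\{\sqrt{s/t}\,\mathfrak eq^j,\sqrt{s/t}/(\mathfrak eq^j)\}$ for some $j\ge 0$ with $|\mathfrak eq^j|\ge 1$, and the ``small'' element has modulus $\le\sqrt{s/t}<1$; hence only the ``large'' one survives alongside $a',b'$ in the ratio check. In the $AC>1$ case, the $\mathbb T$ bounds force the generator to lie in $\{A\sqrt s,\,C/\sqrt s\}$, and the surviving ratio against $a'=A\sqrt t$ simplifies to $t/(sq^j)$ when $\mathfrak e=A\sqrt s$ and to $At/(Cq^j)$ when $\mathfrak e=C/\sqrt s$.

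The main obstacle is showing these surviving ratios lie outside $q^\Z$, which is where the remaining ingredients of $\mathbb T$ are used. The first, equal to $q^\ell$, would force $t/s\in q^\Z$; but the bounds $\sqrt q<s<t<1/\sqrt q$ force $t/s\in(1,1/q)$, an interval disjoint from $q^\Z$. The second, equal to $q^\ell$, would force $t\in\{Cq^\ell/A:\ell\in\Z\}$, explicitly excluded from $\mathbb T$. Thus each ingredient of $\mathbb T$ (the $D^2$ and $1/B^2$ bounds, the $\sqrt q$ and $1/\sqrt q$ bounds, and the $\{Cq^\ell/A\}$ exclusion) plays a specific role, and the heart of the proof is the bookkeeping that matches each surviving ratio to exactly one of these features.
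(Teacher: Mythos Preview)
Your proposal is correct and follows essentially the same approach as the paper's proof: verifying conditions (i)--(iii) of Assumption~\ref{assump:AW} by exploiting the sign pattern to dismiss cross-sign ratios, using the $\mathbb T$-bounds $D^2<t<1/B^2$ to force $|B\sqrt t|,|D/\sqrt t|<1$, and then splitting the atom analysis in part (ii) into the three cases $x\in[-1,1]$, $x$ generated by $A\sqrt s$, and $x$ generated by $C/\sqrt s$, with the $\sqrt q$-bounds handling the second case via $s/t\in(q,1)$ and the exclusion $t\notin\{Cq^\ell/A\}$ handling the third. The only minor point is that in part (ii) you should make explicit (as the paper does) that $|b'|=|B\sqrt t|<1$ so that the only surviving ratio to check is against $a'=A\sqrt t$; you note this for part (i) but leave it implicit for part (ii).
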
 
The case $AC<1$ is well-known; see \citep{bryc17asymmetric}. The proof of the case $AC>1$ is postponed towards the end of this section. Note that in the case $AC>1$, the set $\mathbb T$ is not the largest set possible, although it serves our purpose: in particular, 
 $\mathbb T$ contains an open interval containing $1$ when $A\ne C$ (which is key to our analysis later). It is also worth noticing that $1\notin\mathbb T$ when $A=C>1$ (and in this case we eventually need a different argument).  

The second part of the previous lemma (Equation \eqref{eq:transition well defined}) guarantees that the following Askey--Wilson signed measures are well-defined.
For $s<t$, $s,t\in\mathbb T$ and $x\in \mathbb U_s$, set
\[ 
\begin{split}
    P_{s,t}(x,\d y)& \equiv P_{s,t}\topp{A,B,C,D,q}(x,\d y)\\
&:=\nu\pp{\d y;A\sqrt{t},B\sqrt t,\sqrt{\frac st}\lb x+\sqrt{x^2-1}\rb,\sqrt{\frac st}\lb x-\sqrt{x^2-1}\rb,q},
\end{split}
\] 
and $P_{s,t}(x,\d y) := 0$ with $x\notin\mathbb U_s$. 
When $|x|<1$, expression $x\pm \sqrt{x^2-1}$ is understood as $e^{\pm \i\theta_x}$ with  $\cos\theta_x = x$. Set $P_{s,s}(x,\d y) := \delta_x(\d y)$ if $x\in\mathbb U_s$ and $P_{s,s}(x,\d y)=0$ otherwise. 
It is known from \cite[Lemma 2.14]{wang24askey} that for any $s\leq t$, $\mathbb U_t$ contains the support of $P_{s,t}(x,\d y)$.

Moreover, when $AC<1$, it is known that the Askey--Wilson signed measures $\pi_t(\d y)$ and $P_{s,t}(x,\d y)$ become probability measures, and it was shown in \citep{bryc10askey}
 that there exists a time-inhomogeneous Markov process $(Y_t)_{t>0}$ such that 
\[
\proba(Y_t\in \d y) = \pi_t(\d y) \qmand \proba(Y_t\in \d y\mid Y_s = x) = P_{s,t}(x,\d y).
\]
Then, by standard notation in Markov processes, 
we also let
\equh\label{eq:pi joint}
\pi_{t_1,\dots,t_d}(\d y_1,\dots,\d y_d) := \pi_{t_1}(\d y_1)P_{t_1,t_2}(y_1,\d y_2)\cdots P_{t_{d-1},t_d}(y_{d-1},\d y_d)
\eque
to 
denote
 the 
finite-dimensional
 distribution of the process $Y$ at times $0< t_1\le \cdots\le t_d$.
When $AC>1$, the interpretation of $\pi_t$ and $P_{s,t}$ as laws of Markov processes no longer holds since they are no longer non-negative measures. But,
for any $t_1,\dots,t_d\in\mathbb{T}$ satisfying $0< t_1\le \cdots\le t_d$, equation \eqref{eq:pi joint} remains to define a signed measure on $\R^d$, 
which
 will play a crucial role in our analysis.

At the heart of our analysis of the stationary measures of open ASEP is the following 
identity
 from \citep{bryc17asymmetric,wang24askey}.
\begin{theorem}\label{thm:AW rep}
Consider an open ASEP with parameters $\alpha,\beta>0,\gamma,\delta\ge 0$ and $q\in[0,1)$. We have seen that the parameterization $A,B,C,D$ in~\eqref{eq:defining ABCD} satisfy \eqref{eq:conditions qABCD}. We recall that 
$(\tau_{n,1},\dots,\tau_{n,n})\in\{0,1\}^n$ follows the stationary measure of open ASEP with $n$ locations, denoted by $\mu_n$. Assume $ABCD\notin\{q^{-\ell}:\ell\in\N_0\}$.
\begin{enumerate}[(i)]
\item
There exists a polynomial function $\Pi_n(t_1,\dots,t_n)$ (see Remark \ref{rem:DEHP}), such that for all $t_1,\dots,t_n>0$,
\equh\label{eq:MPA}
\esp\lee\prodd i1n t_i^{\tau_{n,i}}\ree = \frac{\Pi_n(t_1,\dots,t_n)}{\Pi_n(1,\dots,1)}.
\eque 
\item 
For $0<t_1\le\cdots\le t_n$ and $t_1,\dots,t_n\in\mathbb T$, we have the integral representation
\equh\label{eq:AW rep}
\Pi_n(t_1,\dots,t_n)=\int_{\R^n}\prodd j1n\lb1+t_j+2\sqrt{t_j}y_j\rb\pi_{t_1,\dots,t_n}(\d y_1,\dots,\d y_n).
\eque
\end{enumerate}
\end{theorem}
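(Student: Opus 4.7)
The plan is to treat the two parts separately, combining the matrix product ansatz of Derrida--Evans--Hakim--Pasquier for part (i) with a specific spectral representation of the associated quadratic algebra via the Askey--Wilson polynomials for part (ii).

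For part (i), I would invoke the matrix product ansatz \citep{derrida93exact}: the stationary measure $\mu_n$ admits the representation
\[
\mu_n(\tau_1,\dots,\tau_n) = \frac{\langle W|\,\prodd i1n\lb\tau_i D+(1-\tau_i)E\rb\,|V\rangle}{Z_n}, \qquad Z_n:=\langle W|(D+E)^n|V\rangle,
\]
where $D,E$ and the boundary functionals $\langle W|,|V\rangle$ satisfy the DEHP quadratic algebra
$DE-qED=D+E$, $\langle W|(\alpha E-\gamma D)=\langle W|$, $(\beta D-\delta E)|V\rangle=|V\rangle$. Since $t_i^{\tau_i}=(1-\tau_i)+\tau_i t_i$ for $\tau_i\in\{0,1\}$, summing over configurations collapses the joint moment into a single matrix expression:
\[
\esp\lee\prodd i1n t_i^{\tau_{n,i}}\ree = \frac{\langle W|\,\prodd i1n(t_i D+E)\,|V\rangle}{Z_n}.
\]
Setting $\Pi_n(t_1,\dots,t_n):=\langle W|\prodd i1n(t_i D+E)|V\rangle$ and distributing the product shows $\Pi_n$ is multilinear (hence polynomial) in $(t_1,\dots,t_n)$, and $Z_n=\Pi_n(1,\dots,1)$, yielding \eqref{eq:MPA}.

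For part (ii), I would use the spectral realization of the DEHP algebra built from the three-term recurrence of the Askey--Wilson polynomials, following \citep{uchiyama04asymmetric,bryc17asymmetric,wang24askey}. In this realization, at each scale $t$ the operator $tD+E$ acts, after a suitable rescaling and change of variable, as multiplication by $1+t+2\sqrt{t}\,y$ on functions of the spectral variable $y$; the orthogonality measure at scale $t$ is exactly $\pi_t$ and the intertwining kernel between scales $s<t$ is exactly $P_{s,t}$. Writing $\prodd j1n(t_jD+E)$ as a product and inserting spectral decompositions between consecutive factors yields the iterated integral on the right-hand side of \eqref{eq:AW rep} against the signed measure $\pi_{t_1,\dots,t_n}$ defined by \eqref{eq:pi joint}. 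The hypotheses $0<t_1\le\cdots\le t_n$ and $t_j\in\mathbb T$, together with $ABCD\notin\{q^{-\ell}:\ell\in\N_0\}$, are precisely what Lemma \ref{lem:time interval} requires in order for each Askey--Wilson signed measure appearing here to be well-defined as a finite signed measure on~$\R$.

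The main obstacle is the shock-region case $AC>1$. In the fan region $AC<1$, the $\pi_t$'s and $P_{s,t}$'s are bona fide probability measures, so the right-hand side is an expectation with respect to the Askey--Wilson Markov process and all manipulations reduce to Fubini and standard stochastic calculations. In the shock region this probabilistic interpretation disappears: the measures are genuinely signed, with atomic parts generated by parameters of modulus $\ge 1$, and each integral must be controlled by its total variation. I would handle this as in \citep{wang24askey}: verify via Lemma~\ref{lem:time interval}(i) that $(A\sqrt{t_j},B\sqrt{t_j},C/\sqrt{t_j},D/\sqrt{t_j})\in\Omega_q$ for $t_j\in\mathbb T$ so that $\pi_{t_j}$ is well-defined, and similarly for the transition kernels via part (ii) of the same lemma; then observe that both sides of \eqref{eq:AW rep} are analytic in the parameters $(A,B,C,D)$ and $(t_1,\dots,t_n)$ on a common open domain and coincide on the fan sub-region where the stochastic interpretation is available, so the identity extends to the shock region by analytic continuation.
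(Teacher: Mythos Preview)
Your treatment of part (i) via the matrix product ansatz is correct and coincides with the paper, which simply attributes this to \citet{derrida93exact}. For part (ii) the paper does not give a self-contained argument either: it cites \cite[Theorem 1.1]{wang24askey} and observes that the proof in \citep[Section 3.3]{wang24askey} goes through verbatim with the time set $\mathbb T$ here in place of the set $I$ used there. Your sketch of the Askey--Wilson spectral realization follows the same chain of references.

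Where you diverge is in how you pass from the fan region $AC<1$ to the shock region $AC>1$. The argument in \citep{wang24askey} is \emph{not} by analytic continuation: it is a direct algebraic verification that the (now signed) Askey--Wilson measures still satisfy the moment and connection identities that encode the DEHP algebra, so the integral representation holds in the shock region on its own terms. Your analytic-continuation route is a genuinely different idea, and as written it has a gap: you assert that both sides of \eqref{eq:AW rep} are analytic in $(A,B,C,D)$ on ``a common open domain'' containing both regions, but in real parameter space the fan and shock regions are separated by the hypersurface $AC=1$, and the atom structure of the Askey--Wilson signed measures changes discontinuously as parameters cross the thresholds $|\mathfrak e|=1$. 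You neither explain how to connect the two regions nor why the right-hand side remains analytic across these transitions. The gap is repairable --- for fixed $t_1,\dots,t_n$, integrals of polynomials against Askey--Wilson (signed) measures are rational functions of $(a,b,c,d)$ by the three-term recurrence, so both sides of \eqref{eq:AW rep} are rational in $(A,B,C,D)$ and agreement on the open fan region forces global agreement --- but this needs to be said. The direct approach of \citep{wang24askey} avoids the issue altogether and has the advantage of making the signed-measure structure in the shock region explicit, which is exactly what the subsequent asymptotic analysis in this paper relies on.
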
 
\begin{proof}
The first part is due to  \citet{derrida93exact}. For the second part, 
    the result is essentially \cite[Theorem 1.1]{wang24askey}.  The proof in \citep[Section 3.3]{wang24askey} concerns a specific choice of set denoted by $I$ therein, although the same proof remains to work with $I$ replaced by $\mathbb T$ introduced here.
\end{proof}

\begin{remark}\label{rem:DEHP}
The polynomial function $\Pi_n$ is given by 
\[
\Pi_n(t_1,\dots,t_n):=    (1-q)^n\aa{ W|(\mathsf E+t_1\mathsf D)\times\dots\times(\mathsf E+t_n\mathsf D)|V}
\]
for some matrices $\mathsf D$, $\mathsf E$, a row vector $\langle W|$ and a column vector $|V\rangle$ with the same (possibly infinite) 
dimensions,
 satisfying the following relation,
    \equh\label{eq:DEHP algebra} 
        \mathsf D\mathsf E-q\mathsf E\mathsf D=\mathsf D+\mathsf E, \quad
        \langle W|(\alpha\mathsf E-\gamma\mathsf D)= \langle W|, \quad
        (\beta\mathsf D-\delta\mathsf E)|V\rangle=|V\rangle,
 \eque
 commonly referred to as the DEHP algebra, named after the authors of the seminal work \citep{derrida93exact}. The representation \eqref{eq:MPA} with \eqref{eq:DEHP algebra} is also known as the matrix product ansatz. Many concrete examples satisfying \eqref{eq:DEHP algebra} have been discovered; see for example \citep{blythe00exact,derrida93exact,essler96representations,enaud04large,mallick97finite,sandow94partially,sasamoto99one,uchiyama04asymmetric}.  
  
The integral representation of \eqref{eq:MPA} using \eqref{eq:AW rep}, when $AC<1$, was first discovered by \citet{bryc17asymmetric}, where the measure $\pi_{t_1,\dots,t_n}$ is the finite-dimensional distribution of the Askey--Wilson Markov process (with parameters $A,B,C,D,q$), denoted by $Y$, at times $t_1,\dots,t_n$. This representation utilizes the matrix product ansatz and the example of  $\mathsf D$, $\mathsf E$, $\langle W|$, $|V\rangle$ satisfying  DEHP algebra   found by \citet{uchiyama04asymmetric} (related to the Askey--Wilson orthogonal polynomials \citep{askey85some}). The identity \eqref{eq:MPA} in this case can be restated as
\[ 
\esp\lee\prod_{j=1}^n t_{j}^{\tau_{n,j}
}\ree = \frac{ \esp\lee\prod_{j=1}^n(1+t_j+2\sqrt{t_j}\,Y_{t_j})\ree}{ 2^n\esp(1+Y_1)^n}, \quad 0<t_1\leq \cdots\leq t_n.
\] 
  This identity played a fundamental role in recent analysis of stationary measures of open ASEP \citep{bryc19limit,bryc23asymmetric,corwin24stationary}.
When $AC>1$, it was recently discovered in \citep{wang24askey} that the above identity \eqref{eq:MPA} with \eqref{eq:AW rep} remains to hold, and moreover, in its application to open ASEP some estimates on $\pi_t$ and $P_{s,t}$ can be borrowed from the $AC<1$ case; the analysis does become more involved at a few places, in particular when the parameters are on/approaching the coexistence line. 
\end{remark}
\begin{remark}
What is lost when passing from $AC<1$ to $AC>1$ is that now the Askey--Wilson measures are no longer probability measures and hence there is no Markov process associated to them. Nonetheless, the relations $\int P_{s,t}(x,\d y)\pi_s(\d x) = \pi_t(\d y)$ for $s\leq t$, $s,t\in\mathbb T$ and $\int P_{r,s}(x,\d y)P_{s,t}(y,\d z) = P_{r,t}(x,\d z)$ for $r\leq s\leq t$, $r,s,t\in\mathbb T$ remain to hold as in the case of Markov processes. It is worth pointing out that when $AC<1$ we do not need any Markov properties. \end{remark}
\begin{proof}[Proof of Lemma \ref{lem:time interval}]
 We first show that for $t\in\mathbb T$ we have
    \[
    \lb A\sqrt t,B\sqrt t, \frac C{\sqrt t}, \frac D{\sqrt t}\rb\in\Omega_q.
    \]
    Condition
     (i) in Assumption \ref{assump:AW} for $\Omega_q$ follows from $abcd=ABCD\notin\{q^{-\ell}:\ell\in\N_0\}$. 
     Condition (ii) holds because $a,c>0$, $b,d<0$, $bd=BD<1$ and $c/a=C/(At)\notin\{q^{\ell}:\ell\in\Z\}$ (since $t\notin\{q^{\ell}C/A:\ell\in\Z\}$ by our choice of $\mathbb T$). 
     Condition (iii) holds trivially.

    We next show that for $s<t$ in $\mathbb{T}$ and $x\in \mathbb{U}_s$ we have
    \[
    \lb A\sqrt{t},B\sqrt t,\sqrt{\frac st}\lb x+\sqrt{x^2-1}\rb,\sqrt{\frac st}\lb x-\sqrt{x^2-1}\rb\rb\in\Omega_q.
    \]
    Condition (i) in Assumption \ref{assump:AW} for $\Omega_q$ follows from $abcd=ABs\leq0$.  
    Condition (iii) holds trivially. To prove 
    Condition (ii), we observe that since $B\sqrt{s}, D/\sqrt{s}\in(-1,0]$, any possible atom in $U_s$ is generated by either $A\sqrt{s}$ or $C/\sqrt{s}$. Note also that we always have $b=B\sqrt{t}\in(-1,0]$ for $t\in \mathbb T$. We split into the following three cases:
    \begin{enumerate}[(a)]
    \item  
    Let $x\in[-1,1]$ then $c$ and $d$ are complex conjugate pairs with norm $<1$ and  
    Condition (ii) vacuously holds.
    \item  
     Let $x=\frac{1}{2}( q^jA\sqrt{s}+\lb q^jA\sqrt{s}\rb^{-1})$ for $j\in\N$ and $q^jA\sqrt{s}>1$. Then $c=q^jAs/\sqrt{t}$ and $d=1/(q^jA\sqrt{t})<1$. We have $c/a=q^js/t$. Since $\sqrt{q}<s<t<1/\sqrt{q}$ we have $s/t\in(q,1)$ and hence $c/a=q^js/t\notin\{q^{\ell}:\ell\in\Z\}$. Therefore 
    Condition (ii) holds.
    \item  
     Let $x=\frac{1}{2}( q^jC/\sqrt{s}+\lb q^jC/\sqrt{s}\rb^{-1})$ for $j\in\N$ and $q^jC/\sqrt{s}>1$. Then $c=q^jC/\sqrt{t}$ and $d=s/(q^jC\sqrt{t})<1$. We have $c/a=q^jC/(At)$. By our assumption, $t\notin\{q^{\ell}C/A:\ell\in\Z\}$ and hence $c/a=q^jC/(At)\notin\{q^{\ell}:\ell\in\Z\}$.  
    Condition (ii) holds. 
\end{enumerate} 
We conclude the proof of the lemma.
\end{proof}

\section{Representations of
 stationary measures of open KPZ fixed point}\label{sec:eta}
 In this section, we assume $\A,\C\in\R$. 
We recall the definition \eqref{eq:RN1} of the process $\eta\topp{\A,\C}$ in Section \ref{subsec:main results}: 
\begin{equation}\label{eq:def of eta again}
\frac{\d \mathbb{P}_{\eta\topp{\A,\C}}}{\d \mathbb{P}_{\rm Bm}}(\vv\omega) = \frac{\ds e^{(\A+\C)\min_{t\in[0,1]}\omega_t - \A\omega_1}}{\ds \esp_{\rm Bm}\lee e^{(\A+\C)\min_{t\in[0,1]}\omega_t - \A\omega_1}\ree},\quad \vv\omega= \lb\omega_t \rb_{t\in[0,1]}\in C([0,1]),\end{equation}
where $\mathbb{P}_{\rm Bm}$ denotes the law on $C([0,1])$ of a standard Brownian motion, and $\mathbb{E}_{\rm Bm}$ denotes the expectation with respect to $\mathbb{P}_{\rm Bm}$. 

We provide two alternative representations below, of which we first learned from 
\citet[arXiv version]{barraquand22steady}, and fill in with more details. These results are of their own interest. Then, we show that the Laplace transform of $\eta\topp{\A,\C}$ is continuous in $\A,\C$, which will be needed later.

\subsection{A two-line representation}

The first representation is known as the two-line/two-layer representation, which can be found in \citep{barraquand22steady,barraquand24stationary,bryc24two}. 
Consider the path space $C([0,1])^2$ of two real-valued stochastic processes. Each element in this space can be denoted by $(\vv\omega\topp1,\vv\omega\topp 2)$, where $\vv\omega\topp i = (\omega_t\topp i)_{t\in[0,1]}\in C([0,1])$ is called the $i$-th line, for $ i=1,2$.  We let 
 $\mathbb{P}^{\mu_1,\mu_2}_{\rm Bm}$ denote the law on $C([0,1])^2$ of two independent Brownian motions with drifts $\mu_1$ and $\mu_2$, respectively  (a Brownian motion with drift $\mu$ is the process $(\BB_t+\mu t)_{t\ge 0}$, where $\BB$ is a standard Brownian motion).
 
\begin{proposition}
    \label{prop:two line}
Let $\mathbb{P}_{\rm TL}^{\A,\C}$ denote the probability measure on $C([0,1])^2$ determined by
\equh\label{eq:RN2}
\frac{\d\mathbb{P}_{\rm TL}^{\A,\C}}{\d \mathbb{P}^{-\A,\A}_{\rm Bm}}(\vv\omega\topp1,\vv\omega\topp2) = \frac1{\calZ_{\A,\C}} e^{(\A+\C)\min_{t\in[0,1]}(\omega_t\topp 1-\omega_t\topp 2)},\quad  (\vv\omega\topp 1,\vv\omega\topp2)\in C([0,1])^2,
\eque
where $\calZ_{\A,\C}$ is a normalizing constant. 
Then, $\mathbb{P}_{\rm TL}^{\A,\C}$ is the law of two stochastic processes 
\[\lb\frac1{\sqrt 2}\lb \eta\topp{\sqrt 2\A,\sqrt 2 \C}_t+\BB_t\rb_{t\in[0,1]}, \frac1{\sqrt 2}\lb \eta\topp{\sqrt 2\A,\sqrt 2 \C}_t-\BB_t\rb_{t\in[0,1]}\rb,\]
where the process $\eta\topp{\sqrt 2\A,\sqrt 2\C}$ and the standard Brownian motion $\BB$ are independent. In particular, the law of the limit process in Theorem \ref{thm:1} is the marginal distribution of $\mathbb{P}_{\rm TL}^{\A/2,\C/2}$ 
on the first line.
\end{proposition}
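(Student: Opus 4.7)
The plan is to apply an orthogonal change of variables together with Girsanov's theorem. For $(\vv\omega\topp 1, \vv\omega\topp 2) \in C([0,1])^2$, set $U_t := \frac{1}{\sqrt 2}(\omega\topp 1_t + \omega\topp 2_t)$ and $V_t := \frac{1}{\sqrt 2}(\omega\topp 1_t - \omega\topp 2_t)$. Since the map $(x,y) \mapsto ((x+y)/\sqrt 2, (x-y)/\sqrt 2)$ is a pointwise isometry of $\R^2$, it preserves the class of pairs of independent (possibly drifted) Brownian motions.

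Under $\mathbb{P}^{-\A,\A}_{\rm Bm}$, a direct computation of means and covariances shows that $U$ is a standard Brownian motion, $V$ is a Brownian motion with drift $-\sqrt 2\A$, and $U \perp V$. The Radon--Nikodym density in \eqref{eq:RN2} can be rewritten as $\calZ_{\A,\C}^{-1} e^{(\A+\C)\sqrt 2 \min_t V_t}$ since $\omega\topp 1_t - \omega\topp 2_t = \sqrt 2\, V_t$; in particular it depends only on $V$. Thus under $\mathbb{P}_{\rm TL}^{\A,\C}$ the processes $U$ and $V$ remain independent, $U$ is still a standard Brownian motion, and $V$ has density proportional to $e^{(\A+\C)\sqrt 2 \min V}$ with respect to a Brownian motion with drift $-\sqrt 2\A$.

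To identify the law of $V$, I would apply Girsanov's theorem: a Brownian motion with drift $-\sqrt 2\A$ has density proportional to $e^{-\sqrt 2\A\, V_1}$ with respect to standard Brownian motion. Multiplying the two densities, $V$ under $\mathbb{P}_{\rm TL}^{\A,\C}$ has density proportional to $e^{(\A+\C)\sqrt 2 \min V - \sqrt 2\A\, V_1}$ with respect to $\mathbb{P}_{\rm Bm}$, which is precisely the right-hand side of \eqref{eq:def of eta again} at parameters $(\sqrt 2 \A, \sqrt 2 \C)$. Hence $V \eqd \eta\topp{\sqrt 2\A,\sqrt 2\C}$, independent of $U \eqd \BB$. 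Inverting the linear change of variables then expresses $(\omega\topp 1, \omega\topp 2)$ as the claimed linear combinations of the two independent processes $\eta\topp{\sqrt 2\A, \sqrt 2\C}$ and $\BB$.

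The final marginal statement on the limit process in Theorem~\ref{thm:1} follows by specializing the parameters to $(\A/2, \C/2)$ and reading off the first coordinate of $\mathbb{P}_{\rm TL}^{\A/2,\C/2}$, which equals $\frac{1}{\sqrt 2}(\eta\topp{\A/\sqrt 2,\C/\sqrt 2} + \BB)$. The main obstacle is essentially bookkeeping: one must carefully track the $\sqrt 2$ scaling factors through both the orthogonal transformation and the Girsanov shift (with special attention to the signs arising when inverting the linear map), and also verify that the normalizing constant $\calZ_{\A,\C}$ is finite, which reduces to finiteness of $\calH(\sqrt 2\A, \sqrt 2\C)$ given explicitly in \eqref{eq:calH}.
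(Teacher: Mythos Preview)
Your proposal is correct and follows essentially the same route as the paper's proof: an orthogonal change of variables $(\omega^{(1)},\omega^{(2)})\mapsto(U,V)$ reducing the two-line density to a functional of $V$ alone, followed by Girsanov's theorem to identify the law of $V$ with that of $\eta^{(\sqrt 2\A,\sqrt 2\C)}$. Your presentation is arguably a bit cleaner, since you name $U$ and $V$ upfront rather than carrying the substitution inside a test function $F$, but the ideas are identical.
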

\begin{proof}
Let $\mathbb{E}_{\rm TL}^{\A,\C}$ denote the expectation with respect to $\mathbb{P}_{\rm TL}^{\A,\C}$. For an arbitrary bounded continuous function $F:C([0,1])^2\to \R$, we have:
\begin{align*}
\mathbb{E}_{\rm TL}^{\A,\C}&\lee  F(\vv\omega_1,\vv\omega_2) \ree = \frac1{\calZ_{\A,\C}}\mathbb{E}_{\rm Bm}^{-\A,\A} \lee F(\vv\omega_1,\vv\omega_2) e^{(\A+\C)\min_{t\in[0,1]}(\omega\topp 1_t-\omega\topp 2_t)}\ree\\ 
& =  \frac1{\calZ_{\A,\C}}\mathbb{E}_{\rm Bm}^{-\A,\A}\Bigg[ F\pp{\frac{(\vv\omega\topp 1-\vv\omega\topp 2)+(\vv\omega\topp1+\vv\omega\topp 2)}2,\frac{(\vv\omega\topp 1+\vv\omega\topp 2)-(\vv\omega\topp1-\vv\omega\topp 2)}2 } 
\\
& \quad\quad\times  e^{(\A+\C)\min_{t\in[0,1]}(\omega\topp 1_t-\omega\topp 2_t)}\Bigg].
\end{align*}
Notice that:
\[\lb(\BB\topp1_t-\A t)- (\BB\topp 2_t +\A t),(\BB\topp 1_t-\A t)+(\BB\topp 2_t+\A t)\rb_{t\in[0,1]} \eqd \lb\sqrt 2\BB\topp 1_t-2\A t, \sqrt 2\BB\topp 2_t\rb_{t\in[0,1]}.
\]
Therefore,
\begin{align*}
\mathbb{E}_{\rm TL}^{\A,\C} &\lee F(\vv\omega_1,\vv\omega_2)\ree 
= \frac1{\calZ_{\A,\C}}\mathbb{E}_{\rm Bm}^{-\sqrt 2\A,0} \lee F\pp{\frac{\vv\omega\topp 1+\vv\omega\topp 2}{\sqrt 2},\frac{\vv\omega\topp2-\vv\omega\topp1}{\sqrt 2}}e^{\sqrt 2(\A+\C)\min_{t\in[0,1]}\omega\topp1_t}\ree\\
& = \frac 1{\calZ_{\A,\C}}\mathbb{E}_{\rm Bm}^{0,0}\lee F\pp{\frac{\vv\omega\topp1+\vv\omega\topp2}{\sqrt 2},\frac{\vv\omega\topp2-\vv\omega\topp1}{\sqrt 2}}e^{\sqrt 2(\A+\C)\min_{t\in[0,1]}\omega\topp 1_t}e^{-\sqrt 2\A\omega\topp1_1-\A^2}\ree\\
& =\frac{\calH\lb\sqrt{2}\A,\sqrt{2}\C\rb}{e^{\A^2}\calZ_{\A,\C}}\esp\lee {F\pp{\frac1{\sqrt 2}\pp{\eta\topp{\sqrt 2\A,\sqrt 2\C}+\BB },\frac1{\sqrt 2}\pp{\eta\topp{\sqrt 2\A,\sqrt 2\C}-\BB}}}\ree,
\end{align*}
where in the second step we applied Girsanov theorem to the first line, and in the third step we used the definition \eqref{eq:def of eta again} of process $\eta\topp{\A,\C}$. Set $F=1$ we have $\calH\lb\sqrt{2}\A,\sqrt{2}\C\rb=e^{\A^2}\calZ_{\A,\C}$. We conclude the proof. 
\end{proof}

\subsection{Joint probability density function}
The second representation first showed up in \citet[equation (56), arXiv version]{barraquand22steady},
which specifies the joint probability density function of the process $\eta\topp{\A,\C}$. 
For $t>0$, let
\[
\q_t(x,y) := \frac1{\sqrt {2\pi t}}\bb{\exp\pp{-\frac 1{2t}(x-y)^2} - \exp\pp{-\frac 1{2t}(x+y)^2}}\inddd{x,y>0},  
\]
denote the transition kernel of the Brownian motion killed at hitting zero, and write
\begin{align*}
\q\topp b_t(x,y)  
&:= \q_t(x-b,y-b) \\
& = \frac1{\sqrt{2\pi t}}\bb{\exp\pp{-\frac1{2t}(x-y)^2} - \exp\pp{-\frac1{2t}(x+y-2b)^2}}\inddd{x,y>b}.
\end{align*}

\begin{proposition}
    \label{prop:BLD}
The stochastic process $(\eta\topp{\A,\C}_t)_{t\in[0,1]}$ has joint probability density function at times $t_1,\dots,t_d$, with $0=t_0<t_1<\cdots<t_d = 1$, given by
\equh\label{eq:BLD}
p\topp{\A,\C}_{t_1,\dots,t_d}(z_1,\dots,z_d) = \frac1{\calH\lb\A,\C\rb} \int_{-\infty}^{\min\limits_{k=0,\dots,d}z_k} e^{(\A+\C)b-\A z_d} (-\partial b)\pp{\prodd k1d \q_{\Delta t_k}\topp b(z_{k-1},z_k)}\d b, 
\hspace{.1cm}\vv z\in\R^d,
\eque
with $z_0 = 0$ and $\Delta t_k=t_k-t_{k-1}$ for $k=1,\dots,d$.
\end{proposition}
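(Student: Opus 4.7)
My plan is to compute the joint law of the tuple $(\eta_{t_1}^{(\A,\C)},\dots,\eta_{t_d}^{(\A,\C)})$ by pairing with a bounded measurable test function $F:\R^d\to\R$. The definition \eqref{eq:def of eta again} gives
\[
\esp\bb{F\spp{\eta_{t_1}^{(\A,\C)},\dots,\eta_{t_d}^{(\A,\C)}}} = \frac{1}{\calH(\A,\C)}\esp_{\rm Bm}\bb{F(\omega_{t_1},\dots,\omega_{t_d})\,e^{(\A+\C)\min_{t\in[0,1]}\omega_t - \A\omega_1}}.
\]
Since the integrand depends on the Brownian path only through the finite-dimensional marginals $\omega_{t_1},\dots,\omega_{t_d}$ and the running minimum $M := \min_{t\in[0,1]}\omega_t$, the task reduces to identifying the joint law of $(\omega_{t_1},\dots,\omega_{t_d},M)$ under $\proba_{\rm Bm}$.

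The key step is to decompose $M = \min_{k=1,\dots,d}m_k$ with $m_k := \min_{t\in[t_{k-1},t_k]}\omega_t$ and exploit the fact that, conditional on $(\omega_{t_0},\omega_{t_1},\dots,\omega_{t_d}) = (0,z_1,\dots,z_d)$, the restrictions of $\omega$ to the subintervals $[t_{k-1},t_k]$ are independent Brownian bridges, so the $m_k$ are conditionally independent. Combined with the classical reflection-principle identity
\[
\proba_{\rm Bm}\pp{\omega_\tau\in\d y,\,\min_{s\in[0,\tau]}\omega_s\ge b\mmid \omega_0=x} = \q_\tau^b(x,y)\,\d y, \quad b\le x\wedge y,
\]
applied on each subinterval, this yields
\[
\proba_{\rm Bm}\pp{\omega_{t_k}\in\d z_k,\,1\le k\le d,\ M\ge b} = \prodd k 1 d \q_{\Delta t_k}^b(z_{k-1},z_k)\,\d z_1\cdots\d z_d,
\]
valid for $b\le \min_{k=0,\dots,d}z_k$ (with $z_0 = 0$; the indicators built into $\q_\tau^b$ automatically enforce $b\le z_{k-1}\wedge z_k$). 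Differentiating in $b$ identifies the joint density of $(\omega_{t_1},\dots,\omega_{t_d},M)$ at $(z_1,\dots,z_d,b)$ as $(-\partial_b)\prod_{k=1}^d \q_{\Delta t_k}^b(z_{k-1},z_k)$.

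To conclude, I would substitute this joint density into the Radon--Nikodym formula, integrate $b$ over $(-\infty,\min_{k=0,\dots,d}z_k]$, and read off the density in $(z_1,\dots,z_d)$ to obtain \eqref{eq:BLD}; the normalization $\calH(\A,\C)$ is already present from the definition of $\eta^{(\A,\C)}$. I expect the only mild subtlety to be the bookkeeping of the constraint that $b$ lies below \emph{all} of $z_0=0,z_1,\dots,z_d$, which is what produces the $\min_{k=0,\dots,d}z_k$ upper limit in \eqref{eq:BLD}; the reflection-principle identity and the bridge-independence decomposition are standard facts, so the remainder is a direct computation.
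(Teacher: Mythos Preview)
Your proposal is correct and follows the same overall strategy as the paper: reduce to computing the joint law of $(\omega_{t_1},\dots,\omega_{t_d},M)$ with $M=\min_{t\in[0,1]}\omega_t$ under $\proba_{\rm Bm}$, identify its density as $(-\partial_b)\prod_{k=1}^d \q_{\Delta t_k}^{(b)}(z_{k-1},z_k)$, and then apply the Radon--Nikodym definition \eqref{eq:def of eta again}.

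The organizational difference is that you compute the survival function $\proba_{\rm Bm}(\omega_{t_k}\in\d z_k,\ M\ge b)=\prod_k \q_{\Delta t_k}^{(b)}(z_{k-1},z_k)\,\d\vv z$ directly (via bridge independence and the reflection identity for each subinterval) and then differentiate in $b$, whereas the paper first writes down the full joint density of $(\omega_{t_1},\dots,\omega_{t_d},m_1,\dots,m_d)$ with $m_k=\min_{t\in(t_{k-1},t_k]}\omega_t$, fixes the index $j$ of the interval where the global minimum is attained, integrates out the remaining $m_i$ to obtain a Shepp-type formula \eqref{eq:Shepp1}, and then sums over $j$. Your route is slightly more direct and avoids the intermediate decomposition by the location of the argmin; the paper's route yields \eqref{eq:Shepp1} as a byproduct, which is of independent interest and is later reused (e.g.\ in the decomposition of $\Psi_{d,\ell}$ in Appendix~\ref{sec:duality}). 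Both arrive at the same density formula, and the remaining step (plugging into the Radon--Nikodym derivative and reading off the $\min_{k=0,\dots,d}z_k$ upper limit) is identical.
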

\begin{proof}
Introduce
\[
\q^{(b),*}_t(x,y):=(- \partial b) \q_t\topp b(x,y) = \frac{\ds \sqrt 2}{\ds \sqrt{\pi t^3}}(x+y-2b)\exp \pp{-\frac{\ds (x+y-2b)^2}{\ds 2t}}\inddd{x,y>b},   \quad t>0.
\]
Notice that $\q^{(b),*}_t(x,y)>0$ for any $x,y,b\in\R$ and $t>0$.
By the reflection principle, we have:
\[ 
    \begin{split}
        \proba\pp{\min_{s\in[0,t]}\mathbb B_s\in \d b, \mathbb B_t\in \d y\mmid \mathbb B_0 = x} & 
= \proba\pp{\min_{s\in[0,t]}\mathbb B_s\in \d (b-x), \mathbb B_t\in \d (y-x)\mmid \mathbb B_0 = 0}\\
& = \proba\pp{\max_{s\in[0,t]}\mathbb B_s\in \d (x-b), \mathbb B_t\in \d (x-y)\mmid \mathbb B_0 = 0} \\
& = \frac{\ds 2(2(x-b)-(x-y))e^{-(2(x-b)-(x-y))^2/(2t)}}{\ds \sqrt{2\pi t^3}}\inddd{x,y>b} \\& = \q_t^{(b),*}(x,y)\d b\d y.
    \end{split}
\] 
By the above and the Markov property, we have
\begin{equation}\label{eq:vaeraw}
\proba\pp{\BB_{t_i}\in \d z_i,\min_{t\in(t_{i-1},t_i]}\BB_t \in \d b_i, i=1,\dots,d}= \prodd i1d \pp{\q_{\Delta t_i}^{(b_i),*}(z_{i-1},z_i) }  \d \vv z\d \vv b.
\end{equation}
Now fix $j\in\{1,\dots,d\}$ and write $b_j = b$. Notice that for each $i\ne j$,
\[
\int_{b}^{\min\lb z_{i-1},z_i\rb}\q_{\Delta t_i}^{(b_i),*}(z_{i-1},z_i)\d b_i = \q\topp{b}_{\Delta t_i}(z_{i-1},z_i). 
\]
Integrate \eqref{eq:vaeraw} over all $i\in\{1,\dots,d\}$ and $i\neq j$, we have:
\begin{multline}\label{eq:Shepp1}
\proba\pp{\BB_{t_i}\in \d z_i, i=1,\dots,d,\min_{t\in(t_{j-1},t_j]}\BB_t \in \d b, \min_{t\in(t_{j-1},t_j]}\BB_t = \min_{t\in[0,1]}\BB_t}\\
= \prod_{\substack{i=1,\dots, d\\i\ne j}}\q\topp b_{\Delta t_i}(z_{i-1},z_i) \times \q_{\Delta t_j }^{(b),*}(z_{j-1},z_j) \d \vv z\d b.
\end{multline}
This identity can be regarded as an extension of reflection principle, which is closely related to Shepp's formula \citep{shepp79joint} and  Denisov's decomposition of Brownian motion \citep[Corollary 5]{pitman99brownian}. Summing over $j$, we have: 
\[ 
\proba\pp{\BB_{t_i}\in \d z_i, i=1,\dots,d,  \min_{t\in[0,1]}\BB_t\in\d b}\\
= (-\partial b)\pp{ \prodd i1d\q\topp b_{\Delta t_i }(z_{i-1},z_i)} \d \vv z\d b.
\] 
Then \eqref{eq:BLD} follows from the definition \eqref{eq:def of eta again} of process $\eta\topp{\A,\C}$. We conclude the proof.
\end{proof}
\begin{remark}
When restricted to the fan region $\A+\C>0$, the same process showed up in \citet{bryc23asymmetric}, presented as the increment process of a randomized Brownian meander  $(\wt\eta_t\topp{\A,\C})_{t\in[0,1]}$, namely
\[ 
\lb\eta_t\topp{\A,\C}\rb_{t\in[0,1]} = \lb\wt\eta_t\topp{\A,\C} - \wt\eta_0\topp{\A,\C}\rb_{ t\in[0,1]}.
\] 
 The randomized meander $\wt\eta\topp{\A,\C}$ is defined via the joint probability density function of its finite-dimensional distributions given by, for $0=t_0<\cdots<t_d=1$,
\equh\label{eq:wt p}
\wt p\topp{\A,\C}_{t_0,\dots,t_d}(x_0,\dots,x_d) = \frac{ \A+\C}{ \calH\lb\A,\C\rb}e^{-\C x_0} \prodd k1d \q_{\Delta t_k }(x_{k-1},x_k)e^{-\A x_d},\quad x_0,\dots,x_d>0.
\eque
(In the notation of \citep{bryc23asymmetric}, $\calH\lb\A,\C\rb = (\A+\C)\mathfrak C_{\sqrt 2\A,\sqrt 2\C}$).
Note that $\A+\C>0$ is important, otherwise the right-hand side above is a non-positive function. We next justify that this representation is equivalent to Proposition \ref{prop:BLD}.

From \eqref{eq:wt p} we derive the joint density function of $\eta\topp{\A,\C}$ at times $t_1,\dots,t_d$, with $0<t_1<\cdots<t_d = 1$:
\[
 p\topp{\A,\C}_{t_1,\dots,t_d}(z_1,\dots,z_d) = \int_0^\infty \wt p\topp{\A,\C}_{t_0,\dots,t_d}(x_0,x_0+z_1,\dots,x_0+z_d)\inddd{x_0+z_i>0, i=1,\dots,d}\d x_0.
 \]
 Then, using $\q_t(x+y,x+z) = \q_t\topp{-x}(y,z)$ and setting $x_0 = -b$, we have:
 \equh
 \label{eq:BWW}
p\topp{\A,\C}_{t_1,\dots,t_d}(z_1,\dots,z_d) =\frac{\A+\C}{\calH\lb\A,\C\rb}\int_{-\infty}^{\min\limits_{k=1,\dots,d}z_k}e^{(\A+\C)b-\A z_d} \prodd k1d \q\topp b_{\Delta t_k}(z_{k-1},z_k)\d b, \quad \A+\C>0.
\eque
By our assumption $\A+\C>0$, \eqref{eq:BWW} is equivalent to \eqref{eq:BLD} by integration by parts. 
\end{remark}

\subsection{A continuity result on the Laplace transform} 
We will need the following result.
\begin{proposition}\label{prop:continuity of Laplace}
    For any fixed $\vvc=(c_1,\dots,c_d)\in\R^d$ and $\vv t = (t_1,\dots,t_d)$ with $0<t_1<\cdots<t_d= 1$, the Laplace transform 
\[
\psi_\vvt^{(\A,\C)}(\vvc):=\esp\lee\exp\pp{-\summ k1d c_k\eta\topp{\A,\C}_{t_k}}\ree 
\]
 is a continuous function in $\A,\C\in\R$.
\end{proposition}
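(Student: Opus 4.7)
The plan is to prove continuity by unwinding the Radon--Nikodym definition \eqref{eq:def of eta again}, and then applying dominated convergence under the Brownian measure $\mathbb{P}_{\rm Bm}$. Specifically, for each fixed $\vvc$ and $\vvt$, we write
\[
\psi_\vvt^{(\A,\C)}(\vvc) \;=\; \frac{\ds \esp_{\rm Bm}\bb{\exp\pp{-\summ k1d c_k \omega_{t_k} + (\A+\C)\min_{t\in[0,1]}\omega_t - \A\omega_1}}}{\calH(\A,\C)},
\]
so it suffices to establish continuity of numerator and denominator separately, and to verify that $\calH(\A,\C)$ is bounded away from zero locally in $(\A,\C)$.

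For the denominator, I would invoke the explicit formula \eqref{eq:calH}: the function $H(x) = e^{x^2}\erfc(x)$ is smooth on $\R$, so the ratio $(\A H(\A/\sqrt 2) - \C H(\C/\sqrt 2))/(\A-\C)$ is continuous on $\{\A\ne\C\}$, and a short Taylor expansion shows that it extends continuously to the diagonal with value $(1+\A^2)H(\A/\sqrt 2) - \sqrt{2/\pi}\,\A$, matching the second branch of \eqref{eq:calH}. Strict positivity of $\calH$ follows from the fact that it is a Laplace-type integral of a positive Gaussian functional, so the denominator poses no difficulty.

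For the numerator, for fixed $(\A_0,\C_0)\in\R^2$ choose a compact neighborhood $K$ of $(\A_0,\C_0)$ and let $M := \sup_{(\A,\C)\in K}(|\A|+|\C|)$. For any $(\A,\C)\in K$ the integrand is dominated pointwise by
\[
G(\vv\omega) \;:=\; \exp\pp{\summ k1d |c_k|\, |\omega_{t_k}| + 2M\,\max_{t\in[0,1]}|\omega_t| + M\,|\omega_1|}.
\]
Since $\max_{t\in[0,1]}|\omega_t|$ has Gaussian tails under $\mathbb{P}_{\rm Bm}$ (by the reflection principle $\mathbb{P}_{\rm Bm}(\max_{t\in[0,1]}|\omega_t|>x) \le 4\,\mathbb{P}_{\rm Bm}(|\omega_1|>x)$), $G$ is $\mathbb{P}_{\rm Bm}$-integrable. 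Continuity of the integrand in $(\A,\C)$ for every fixed $\vv\omega$ together with dominated convergence then yields continuity of the numerator, completing the proof.

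The only mildly delicate point is the behavior of the denominator at $\A=\C$, which is handled by observing that \eqref{eq:calH} is defined so that its two branches agree in the limit; everything else is a standard dominated-convergence argument using exponential moment bounds for Brownian extrema, so I do not anticipate any genuine obstacle.
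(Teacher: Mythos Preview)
Your argument is correct and takes a genuinely simpler route than the paper's. You work directly with the Radon--Nikodym definition \eqref{eq:def of eta again} on the Brownian path space: the exponent $(\A+\C)\min_{t}\omega_t - \A\omega_1$ is linear in $(\A,\C)$ and is dominated in absolute value by a constant multiple of $\max_{t\in[0,1]}|\omega_t|$, which has finite exponential moments, so dominated convergence under $\mathbb P_{\rm Bm}$ gives continuity of the numerator immediately; continuity and positivity of $\calH(\A,\C)$ are then read off from \eqref{eq:calH} (or, equally, from the same dominated-convergence argument applied to the denominator itself). The paper instead passes through the explicit joint-density formula of Proposition~\ref{prop:BLD}, performs a change of variables to the running-minimum decomposition, and bounds the kernels $\q_t$ and $\q^*_t$ term by term to justify dominated convergence in $\R^{d+1}$. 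Your path-space argument is shorter and avoids the density machinery altogether; the paper's approach, while heavier, has the side benefit of exhibiting explicit integrable majorants for the density-level integrals, which could be reused if one wanted sharper quantitative control on $\psi_{\vvt}^{(\A,\C)}$.
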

\begin{proof}
Following \eqref{eq:BLD}, by a simple change of variables we only need to show that, 
\begin{multline}\label{eq:vereav}
    \psi_\vvt\topp{\A,\C}(\vvc)
    =\frac{1}{\calH(\A,\C)}\int_{\R^{d}}\int_{-\infty}^{\min\limits_{k=0,\dots,d}z_k}\exp\lb-\sum_{i=1}^dc_iz_i+(\A+\C)b-\A z_d\rb\times \\
    (-\partial b)\lb\prod_{k=1}^d\q_{\Delta t_k}\topp{b}(z_{k-1},z_k)\rb\d b\d z_1\dots\d z_d
\end{multline}
is continuous with respect to $\A,\C\in\R$. 
Since $\calH(\A,\C)$ is a continuous function in $\A$ and $\C$, we only need to show that the integral on the right-hand side is also continuous.
      In view of the fact that $\q_t^{(b)}(x,y)\neq0$ only if $x,y>b$, by a change of variables 
      \[y_0=-b,\quad y_k=z_k-b,\quad k=1,\dots,d,\]
      the integral on the right-hand side of 
      \eqref{eq:vereav}
       is equal to:
\begin{multline}\label{eq:to prove}
    \sum_{k=1}^d\int_{\R^{d+1}}\exp\lb(c_1+\dots+c_d-\C)y_0-c_1y_1-\dots-c_{d-1}y_{d-1}-(c_d+\A)y_d\rb\times\\
    \q_{\Delta t_1}(y_0,y_1) 
    \cdots
       \q^*_{\Delta t_k}(y_{k-1},y_k)   \cdots 
    \q_{\Delta t_d}(y_{d-1},y_d)\d y_0\dots\d y_d,
\end{multline}
where we recall 
\[
\q_t(x,y) = \frac1{\sqrt {2\pi t}}\bb{\exp\pp{-\frac 1{2t}(x-y)^2} - \exp\pp{-\frac 1{2t}(x+y)^2}}\inddd{x,y>0}, \quad t>0,
\]
and we denote 
\[
\q^{*}_t(x,y):= (- \partial b)|_{b=0} \q_t\topp b(x,y) = \frac{\ds \sqrt 2}{\ds \sqrt{\pi t^3}}(x+y)\exp \pp{-\frac{\ds (x+y)^2}{\ds 2t}}\inddd{x,y>0}, \quad t>0.
\]       
By the dominated convergence theorem, we only need to show that each summand of \eqref{eq:to prove} is finite for any $c_1,\dots,c_d\in\R$. Set
\[
\upsilon_0=-c_1-\dots-c_d+\C,\quad \upsilon_j=c_j, \text{ } j=1,\dots,d-1,\quad \upsilon_d=c_d+\A.
\]
We only need to show that for each $k=1,\dots,d$ and any   $\upsilon_0,\dots,\upsilon_{d}\in\mathbb{R}$,
\begin{equation}\label{eq:vare}
    \int_{\R^{d+1}}\exp\lb-\summ r0d\upsilon_ry_r\rb\q_{\Delta t_1}(y_0,y_1)
    \cdots   \q^*_{\Delta t_k}(y_{k-1},y_k)   
    \cdots \q_{\Delta t_d}(y_{d-1},y_d)\d y_0\dots\d y_d <\infty.\end{equation} 
Notice that:
\[
\q_t(x,y)\leq\frac1{\sqrt {2\pi t}}\bb{\exp\pp{-\frac 1{2t}(x-y)^2} }\inddd{x,y>0},
\]
and that:
\[
\q^{*}_t(x,y)\leq C_t\exp \pp{-\frac{(x+y)^2}{4t}}\inddd{x,y>0}, 
\]
where for each $t>0$,
\[C_t:=\frac{\sqrt 2}{\sqrt{\pi t^3}}\sup_{z>0}\lb z\exp\lb-\frac{z^2}{4t}\rb\rb\in(0,\infty).\]
Therefore, the left-hand side of \eqref{eq:vare} is bounded from above by a finite constant (involving $\Delta t_1,\dots,\Delta t_d$) times:
\begin{multline*} 
    \int_{\R_+^{k}}\exp\lb-\summ r0{k-1}\upsilon_ry_r-\summ r1{k-1}\frac{(y_{r-1}-y_r)^2}{2\Delta t_r}-\frac{y_{k-1}^2}{4\Delta t_k}\rb\d y_0\dots\d y_{k-1}\\ 
\times    \int_{\R_+^{d+1-k}}\exp\lb-\summ rkd\upsilon_ry_r-\frac{y_{k}^2}{4\Delta t_k} - \summ r{k+1}d-\frac{(y_{r-1}-y_r)^2}{2\Delta t_{r}}\rb\d y_k\dots\d y_d,
\end{multline*}
which is a finite number. We conclude the proof.
\end{proof}

\section{Proof of the main theorem}\label{sec:proof}
We start with some notations and an overview of the proof.
Recall that we consider a sequence of open ASEP, each with $n$ sites and with parameters $A_n, B_n, C_n, D_n,q$ satisfying Assumption \ref{assump:0}. 
Recall the definition of $h_n(x)$ in \eqref{eq:h_n}.  
Our goal is to prove: 
\[ 
\frac1{\sqrt n}\lb h_n(x)\rb_{x\in[0,1]}
\weakto \frac1{\sqrt 2}\lb\BB_x + \eta\topp{\A/\sqrt 2,\C/\sqrt 2}_x\rb_{x\in[0,1]} \mbox{ in $D([0,1])$.}
\]
To prove the tightness, it suffices to slightly modify the argument of \citep[Proposition 2.4]{bryc23asymmetric}, and we provide a brief summary below right before Section \ref{sec:tangent}. For the rest of this section, we focus on the proof of the convergence of finite-dimensional distributions by computing the limit Laplace transform.

We fix $x_0 = 0$, and write $\vv x = (x_1,\dots,x_d)$ with $0<x_1<\cdots<x_d= 1$.  Consider the Laplace transform:
\[ 
\varphi_{n,\vv x}(\vv c) = \varphi_{n,\vv x}^{(A_n,B_n,C_n,D_n,q)}(\vv c) := 
\esp
\lee\exp\pp{-\summ k1d c_kh_n(x_k)}\ree, \quad \vv c=(c_1,\dots,c_d) \in \R^d.
\]

By \cite[Theorem A.1]{bryc19limit}   we only need to prove
\[
\limn\varphi_{n,\vvx}\pp{\frac{\vvc}{\sqrt n}} = \varphi_\vvx(\vvc), 
\]
for $\vvc = (c_1,\dots,c_d)$ in some open subset of $\R^d$, where  
\begin{equation}\label{eq:Laplace right-hand side}
\varphi_\vvx(\vvc)=\varphi_\vvx^{(\A,\C)}(\vvc):=\esp\lee\exp\pp{-\frac1{\sqrt 2}\summ k1d c_k\lb\BB_x +\eta\topp{\A/\sqrt 2,\C/\sqrt 2}_{x_k}\rb}\ree 
\end{equation}
is the Laplace transform of the desired limit process.

Recall the Askey--Wilson signed measures reviewed in Section \ref{sec:AWP}. We write 
\begin{equation}\label{eq:measures n}
\pi_t\topp n(\d y):= \pi_t\topp{A_n,B_n,C_n,D_n,q}(\d y) \qmand P_{s,t}\topp n(x,\d y) := P_{s,t}\topp{A_n,B_n,C_n,D_n,q}(x,\d y), 
\end{equation}
and use the notation $\pi_{t_1,\dots,t_d}\topp n(\d y_1,\dots,\d y_d)$ (see \eqref{eq:pi joint}) accordingly. 
Note that for these signed measures to be well-defined, it suffices to have $s\leq t$ and $t_1\leq\dots\leq t_d$ from  (recall Lemma \ref{lem:time interval})
\[ 
 \mathbb{T}_n=\mathbb{T}\lb A_n,B_n,C_n,D_n,q\rb=\lb\max\ccbb{ \sqrt{q},D_n^2},\min\ccbb{ \frac1{\sqrt{q}},\frac1{B_n^2}}\rb\setminus\ccbb{\frac{C_n}{A_n}q^{\ell}:\ell\in\Z}.
\] 

Write $s_k = c_k+\cdots+c_d$ for $k=1,\dots, d$ and $n_k = \floor{n x_k}$ for $k=0,\dots,d$. 
The starting point of the proof is a representation of the Laplace transform as a multiple integral in the next lemma.
\begin{lemma}\label{lem:starting point}
Assume $\A\ne\C$,
$c_1,\dots,c_{d-1}>0$, and
\equh\label{eq:s_k constraint}
     s_k\neq\frac{1}{2}(\C-\A),\quad  k=1,\dots,d.
\eque
Then, for $n$ large enough 
we have
\equh\label{eq:Markov rep}
\varphi_{n,\vvx}\pp{\frac\vvc{\sqrt n}}  = 
\frac{\ds \Phi_d\topp n(\vvx, \vvc)}{\ds Z_n},
\eque
with 
\begin{equation}\label{eq:integral numerator}
   \Phi_{d}\topp n(\vvx,\vvc)= 2^{-n}\int_\Rd \prodd k1d \pp{\cosh\pp{\frac{s_k}{\sqrt n}}+y_k}^{n_k-n_{k-1}}\pi\topp n_{e^{-2s_1/\sqrt n},\dots,e^{-2s_d/\sqrt n}}(\d y_1,\dots,\d y_d).
\end{equation}
and
\begin{equation}\label{eq:denominator}
Z_n = 2^{-n}\int_\R(1+y)^n\pi_1\topp n(\d y).
\end{equation}
In particular, for $n$ large enough, $e^{-2s_1/\sqrt n},\dots,e^{-2s_d/\sqrt n},1\in \mathbb T_n$.
\end{lemma}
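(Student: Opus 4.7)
The plan is to rewrite $\varphi_{n,\vvx}(\vvc/\sqrt n)$ as a product-form expectation so that Theorem \ref{thm:AW rep} yields the claimed integral formula. The key observation is the summation-by-parts identity
\[
\sum_{k=1}^d \frac{c_k}{\sqrt n}h_n(x_k)=\sum_{k=1}^d \frac{s_k}{\sqrt n}\sum_{i=n_{k-1}+1}^{n_k}(2\tau_{n,i}-1),
\]
which lets one write $\exp(-\sum_k c_k h_n(x_k)/\sqrt n) = \prod_k e^{(n_k-n_{k-1})s_k/\sqrt n}\prod_i t_i^{\tau_{n,i}}$ with $t_i:=e^{-2s_k/\sqrt n}$ for $n_{k-1}<i\le n_k$. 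Taking expectations and invoking Theorem \ref{thm:AW rep}(i) reduces the problem to evaluating $\Pi_n(t_1,\dots,t_n)$ and $\Pi_n(1,\dots,1)$ via the integral representation \eqref{eq:AW rep}.

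To justify the use of Theorem \ref{thm:AW rep}(ii), I would verify the hypotheses for both of these $\Pi_n$'s. The ordering $0<t_1\le\cdots\le t_n$ is equivalent to $s_1>\cdots>s_d$, which is exactly the hypothesis $c_1,\dots,c_{d-1}>0$. The global condition $A_nB_nC_nD_n\notin\{q^{-\ell}:\ell\in\N_0\}$ holds for large $n$ because $A_nB_nC_nD_n\to BD\in[0,1)$. The delicate hypothesis is membership $t_i,1\in\mathbb T_n$: since both $t_i$ and $1$ tend to $1$, they lie in the open interval part of $\mathbb T_n$ for large $n$ automatically, and among the excluded discrete values $\{C_nq^\ell/A_n:\ell\in\Z\}$ only $\ell=0$ can approach $1$. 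A first-order Taylor expansion gives $\sqrt n(1-e^{-2s_k/\sqrt n})\to 2s_k$ and $\sqrt n(1-C_n/A_n)\to\C-\A$; hence the hypothesis $\A\ne\C$ is what pushes $1$ off $C_n/A_n$ for large $n$, and \eqref{eq:s_k constraint} is what pushes each $t_i$ off $C_n/A_n$ for large $n$.

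Once the integral representation applies, I would exploit $P_{t,t}\topp n(x,\d y)=\delta_x(\d y)$: consecutive sites with the same value of $t_i$ contribute a Dirac transition, so the $n$-fold integral collapses to a $d$-fold integral against $\pi\topp n_{t_{n,1},\dots,t_{n,d}}$, with each factor $(1+t_{n,k}+2\sqrt{t_{n,k}}y_k)$ raised to the multiplicity $n_k-n_{k-1}$. The algebraic identity $1+e^{-2s/\sqrt n}+2e^{-s/\sqrt n}y=2e^{-s/\sqrt n}(\cosh(s/\sqrt n)+y)$ then pulls out an exponential factor $2^n e^{-\sum_k(n_k-n_{k-1})s_k/\sqrt n}$; similarly $\Pi_n(1,\dots,1)$ simplifies to $4^n Z_n$. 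The exponential prefactor cancels exactly the one from the summation-by-parts, and the overall powers of $2$ cancel as well, leaving $\varphi_{n,\vvx}(\vvc/\sqrt n)=\Phi_d\topp n(\vvx,\vvc)/Z_n$.

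The main obstacle is the bookkeeping in the second step: the excluded discrete set defining $\mathbb T_n$ accumulates at the single point $1$, which is precisely where all the times $t_i$ and the denominator's time concentrate. The explicit Taylor expansions reveal that the hypotheses $\A\ne\C$ and \eqref{eq:s_k constraint} are sharp, needed to avoid a coexistence-line-type collision. The remaining steps are standard algebraic manipulation, essentially identical in form to the fan-region computation of \citep{bryc23asymmetric}.
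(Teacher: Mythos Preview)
Your proposal is correct and follows essentially the same approach as the paper's own proof: both rewrite the Laplace transform via summation-by-parts, invoke Theorem \ref{thm:AW rep}(i), and then verify membership in $\mathbb T_n$ by the same first-order expansions $\sqrt n(1-C_n/A_n)\to\C-\A$ and $\sqrt n(e^{-2s_k/\sqrt n}-C_n/A_n)\to\C-\A-2s_k$. You spell out two steps that the paper leaves implicit (the collapse of the $n$-fold integral to a $d$-fold one via $P_{t,t}\topp n(x,\d y)=\delta_x(\d y)$, and the identity $1+t+2\sqrt t\,y=2e^{-s/\sqrt n}(\cosh(s/\sqrt n)+y)$), but these are exactly what the paper means by ``which implies \eqref{eq:integral numerator}''.
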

\begin{proof}
In view of Assumption \ref{assump:0} we have $A_nB_nC_nD_n\rightarrow BD<1$, in particular $A_nB_nC_nD_n\notin\{q^{-\ell}:\ell\in\N_0\}$ for large enough $n$. Using the matrix product ansatz \eqref{eq:MPA} we have:
\begin{align*}
\varphi_{n,\vv x}(\vv c)
&=
\esp
\lee \exp\pp{-\summ k1d\sum_{j=n_{k-1}+1}^{n_k}s_k\pp{2\tau_{n,j}-1} } \ree\\
& =  \exp\pp{\summ k1d  s_k(n_k-n_{k-1})}
\esp
\lee \prod_{k=1}^{d}\prod_{j=n_{k-1}+1}^{n_k}\lb e^{-2 s_k}\rb^{\tau_{n,j}} \ree\\
& =  
e^{\summ k1d  s_k(n_k-n_{k-1})}
\frac{\ds 1}{\ds \Pi_n(1,\dots,1)} \Pi_n\lb\underbrace{e^{-2s_1},\dots,e^{-2s_1}}_{n_1},\dots,\underbrace{e^{-2s_d},\dots,e^{-2s_d}}_{n_d-n_{d-1}}\rb. 
\end{align*} 
So, \eqref{eq:Markov rep} holds
with
\begin{multline*}
    \Phi_{d}\topp n(\vvx,\vvc) := 2^{-2n}\exp\pp{\summ k1d  \frac{s_k}{\sqrt{n}}(n_k-n_{k-1})}\times\\
    \Pi_n\lb\underbrace{e^{-2s_1/\sqrt n},\dots,e^{-2s_1/\sqrt n}}_{n_1},\dots,\underbrace{e^{-2s_d/\sqrt n},\dots,e^{-2s_d/\sqrt n}}_{n_d-n_{d-1}}\rb 
\end{multline*}
and 
\[
Z_n = 2^{-2n}\Pi_n(1,\dots,1). 
\]

In order to express $\Phi_{d}\topp n(\vvx,\vvc)$ as an integral of the form \eqref{eq:AW rep} (which implies \eqref{eq:integral numerator}), we need:
\begin{equation}\label{eq:condition in interval}
 e^{-2s_k/\sqrt n}\in \mathbb T_n,\quad  k=1,\dots,d.
\end{equation}
In view of Assumption \ref{assump:0}  we have, as $n\rightarrow\infty$,  
\begin{equation}\label{eq:assumption implies ABCDn}
1/B_n^2\rightarrow1/B^2>1,\quad D_n^2\rightarrow D^2<1,\quad C_n/A_n\rightarrow1 \hspace{0.5em} \mbox{ and } \hspace{0.5em}\sqrt{n}(1-C_n/A_n)\rightarrow \C-\A.
\end{equation}
Therefore, under \eqref{eq:s_k constraint}
we have
\[\limn\sqrt{n}\lb e^{-2s_k/\sqrt n}-C_n/A_n\rb=\C-\A-2s_k\neq0,\quad  k=1,\dots,d,\]
hence for large enough $n$, condition \eqref{eq:condition in interval} holds, and \eqref{eq:integral numerator} follows. For \eqref{eq:denominator}, it suffices to observe that in view of \eqref{eq:assumption implies ABCDn}, the assumption $\A\ne\C$ implies that for large enough $n$ we have $1\in\mathbb T_n$.

Note that $\A\ne\C$ is needed only for \eqref{eq:denominator}, and $c_1,\dots,c_{d-1}>0$ and \eqref{eq:s_k constraint} is needed only for \eqref{eq:integral numerator}.
\end{proof}

{
 The proof of 
convergence of finite-dimensional distributions in 
 Theorem \ref{thm:1} consists of the following three steps.} 
\begin{enumerate}[(i)]
\item The first step is to show that
\[
\limn\varphi_{n,\vvx}\pp{\frac{\vvc}{\sqrt n}} = \varphi_\vvx(\vvc), 
\]
for $\vvc = (c_1,\dots,c_d)$ in some open subset of $\R^d$ (where the right-hand side is in \eqref{eq:Laplace right-hand side}). 
We shall exploit the integral representation in Lemma \ref{lem:starting point}; see 
 Proposition \ref{prop:limit Laplace} in Section \ref{sec:limit Laplace}. Some preparation is provided in Section \ref{sec:tangent}. 
Note that to use the integral representations we restrict ourselves first to the case that $\A\ne\C$ and $\A,\C\ne 0$. The restriction $\mathsf{a},\mathsf{c}\neq0$ will be introduced later in Lemma \ref{lem:Zn} to simplify the study of the asymptotics
of the denominator $Z_n$ given by \eqref{eq:denominator}.  
\item 
It turns out that the so-obtained limit function of the left-hand side is not immediately recognized as $\varphi_\vvx(\vvc)$. 
In other examples analyzed by the same method, the next step consists of establishing the so-called duality formula for Laplace transforms of Markov processes (see \citep{bryc19limit,bryc23asymmetric,bryc23markov}) 
in Proposition \ref{prop:duality}.
For this proposition we provide a soft argument thanks to the recent result by \citet{bryc24two}. 
Some further comments are collected in Appendix \ref{sec:duality}.
\item In the final step,  in Section \ref{subsec:proof for a=c} 
we get rid of the assumption $\A\ne\C$ and $\A,\C\ne 0$ utilizing a continuity argument. 
\end{enumerate} 
Most of the effort will be devoted to carry out the above three steps, in the following three subsections respectively. The proof of tightness can in fact be read from earlier results and is explained below.
\begin{proof}[Proof of tightness]
The tightness can be established by a coupling argument, as demonstrated in \citep[Propositions 2.3 and 2.4]{bryc23asymmetric}. It was stated for fan region only therein, but the argument applies to the shock region almost immediately. 

The coupling idea has been used earlier in \citep[Lemma 2.1]{gantert23mixing} and \citep[Lemma 4.1]{corwin24stationary}. In words, for every $n$ it is possible to construct two sequences of i.i.d.~Bernoulli random variables $\{\tau_{n,j}\topp i\}_{j=1,\dots,n}, i=1,2$ along with the $\{\tau_{n,j}\}_{j=1,\dots,n}$ of our interest on the same probability space such that
\[
\tau_{n,j}\topp1\le \tau_{n,j}\le\tau_{n,j}\topp 2, \quad j=1,\dots,n.
\]
Then, by Donsker's theorem we know that the partial-sum process of each sequence has tightness. 
Furthermore, because of the assumption $A_n\to 1, C_n\to 1$ (regardless of the fan/shock region), we can pick the two sequences be such that $\mu\topp i_n:=\esp \tau_{n,1}\topp i\to 1/2$ as $n\to\infty$ for both $i=1,2$. The tightness of the normalized height function based on $\{\tau_{n,j}\}_{j=1,\dots,n}$ then follows. 

More specifically, to modify the proof of \citep[Proposition 2.4]{bryc23asymmetric} for the shock region, it suffices to notice that by the same construction of $\{\wt\tau_j,\what\tau_j\}_{j=1,\dots,n}$ therein, instead of $\wt\tau_j\le \tau_j\le\what\tau_j$ ((2.10) therein) we have $\what\tau_j\le \tau_j\le \wt\tau_j$. The relation (2.14) therein needs to be modified accordingly, but eventually we need to establish the same inequality (2.15), the proof of which remains unchanged.
\end{proof}
\subsection{Asymptotics of Askey--Wilson signed measures}\label{sec:tangent}
The limits of the integrals \eqref{eq:integral numerator} and \eqref{eq:denominator} with respect to Askey--Wilson signed measures $\pi\topp n_{e^{-2s_1/\sqrt n},\dots,e^{-2s_d/\sqrt n}}(\d y_1,\dots,\d y_d)$ and $\pi_1\topp n(\d y)$ are determined by the integrals near the upper boundary $1$ of the continuous domain $[-1,1]$ 
(see Remark \ref{rem:fan} for a further explanation). We shall need the asymptotic behavior of the following signed measures
\begin{align}
\wt \pi_t\topp n(\d u) &:= \pi\topp n_{e^{2t/\sqrt n}}\pp{\d \pp{1-\frac u{2n}}}, \label{eq:mar}\\
\wt P_{s,t}\topp n(u,\d v)&:=P_{e^{2s/\sqrt n},e^{2t/\sqrt n}}\topp n\pp{1-\frac u{2n},\d\pp{1-\frac v{2n}}},\label{eq:tran}
\end{align}
and we write accordingly $\wt\pi_{t_1,\dots,t_d}\topp n(\d u_1,\dots,\d u_d)$ (recall \eqref{eq:pi joint}).
We introduce a few notations useful in studying the asymptotics. The signed measure $\wt\pi_t\topp n$ has continuous and discrete parts. The density function (of continuous part) and the mass function (of discrete part) are denoted  by $\wt\pi_t^{(n),\rmc}$ and $\wt\pi_t^{(n),\rmd}$ 
respectively.
 For $\wt P_{s,t}\topp n$, it again has two parts. We let $\wt P_{s,t}^{(n),\rmc,\rmc}(u, v)$ and $\wt P_{s,t}^{(n),\rmc,\rmd}(u, v)$ denote the density function and mass function respectively when $u\in[0,4n]$, which is the support of continuous part of $\wt P_{s,t}\topp n$. When $u\notin[0,4n]$ we let $\wt P_{s,t}^{(n),\rmc,\rmc}(u, v)=\wt P_{s,t}^{(n),\rmc,\rmd}(u, v)=0$. We use similar notations $\wt P_{s,t}^{(n),\rmd,\rmc}(u, v)$ and $\wt P_{s,t}^{(n),\rmd,\rmd}(u, v)$ when $u$ is an atom of $\wt \pi_s\topp n$, and when $u$ is not an atom  we let $\wt P_{s,t}^{(n),\rmd,\rmc}(u, v)=\wt P_{s,t}^{(n),\rmd,\rmd}(u, v)=0$.

We first examine the situations of atoms in the Askey--Wilson signed measures 
\[\pi\topp n_{e^{2t/\sqrt n}}(\d y)=\nu\lb\d y;A_ne^{t/\sqrt{n}},B_ne^{t/\sqrt{n}},C_ne^{-t/\sqrt{n}},D_ne^{-t/\sqrt{n}},q\rb\] 
for fixed $t\in\R$ and 
$n$ large enough.
 For the following discussions 
we write
\equh\label{eq:t_n}
t_n = e^{2t/\sqrt n},\quad n\in\N, \quad t\in\R.
\eque  
For $t\ne(\A-\C)/2$
and $n$ large enough
we have $t_n\in\mathbb T_n$ (see Lemma \ref{lem:starting point}), hence $\pi\topp n_{t_n}$ is well-defined. 
Recall that  $\A<-\C$.
For any fixed $t\ne (\A-\C)/2$,
 when $n$ gets large enough, the signed measure $\pi\topp n_{t_n}$ always has atoms. Specifically, if $t<-\C$, $\pi\topp n_{t_n}$ has one atom 
 generated
  by $C_n/\sqrt{t_n}$, and if $t>\A$, $\pi\topp n_{t_n}$ has one atom 
  generated
   by $A_n\sqrt{t_n}$. In particular, if $t\in(\A,-\C)$, the signed measure $\pi\topp n_{t_n}$ has exactly two atoms 
  generated by $A_n\sqrt{t_n}$ and $C_n/\sqrt{t_n}$ respectively. For $t\in\{\A,-\C\}$, the situations of atoms in $\pi\topp n_{t_n}$ may be different for each $n$ even when $n$ 
  becomes
   large, and we will avoid these cases.

We shall work with 
\[
t<\A<-\C.
\]
Note that in this case, $t\ne (\A-\C)/2$ holds. 

The next three lemmas provide 
asymptotics related to $\wt\pi\topp n_t, \wt P_{s,t}\topp n$ needed in the proofs later.
\begin{lemma}\label{lem:marginal atom}
    Assume 
$t<\A<-\C$.
    Consider the signed measure $\wt\pi_{t}\topp n$ defined by \eqref{eq:mar} and \eqref{eq:measures n}. For sufficiently large $n$, $\wt\pi_{t}\topp n$ has a single atom at a location $\wt y_t\topp n$ (see \eqref{eq:loc of atom} below) with mass $\wt \pi_t^{(n),\rmd}\spp{\wt y_t\topp n}$, 
    satisfying
\[ 
\lim_{n\rightarrow\infty}\wt y_t\topp n 
=-(t+\C)^2
=:y^{*,\C}_t,
\] 
and
\[ 
\lim_{n\rightarrow\infty}\wt \pi_t^{(n),\rmd}\pp{\wt y_t\topp n}=  \frac{\ds 2(\C+t)}{\ds 2t+\C-\A}>0.
\] 
\end{lemma}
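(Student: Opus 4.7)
The plan is to identify the unique atom of $\pi_{t_n}\topp n$ (with $t_n = e^{2t/\sqrt n}$) and extract its rescaled location and mass by Taylor-expanding the explicit Askey--Wilson formulas to leading order in $1/\sqrt n$. Recall that atoms of $\nu(\cdot;a,b,c,d,q)$ can only be generated by parameters of absolute value at least one. In our setting the four relevant parameters are $a_n := A_n e^{t/\sqrt n}$, $b_n := B_n e^{t/\sqrt n}$, $c_n := C_n e^{-t/\sqrt n}$, $d_n := D_n e^{-t/\sqrt n}$. Under Assumption~\ref{assump:0}, $b_n\to B$ and $d_n\to D$ stay in $(-1,0]$, and a direct expansion gives
\[
a_n - 1 = \frac{t - \A}{\sqrt n} + o\pp{\tfrac1{\sqrt n}}, \qquad c_n - 1 = -\frac{t+\C}{\sqrt n} + o\pp{\tfrac1{\sqrt n}},
\]
so the hypothesis $t<\A<-\C$ forces $a_n<1$ and $c_n>1$ for all large $n$. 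Moreover $c_n q^j < 1$ for $j\ge 1$ and large $n$ because $q<1$ and $c_n\to 1$, so $c_n$ generates exactly one atom, at $y_n^* := (c_n + c_n^{-1})/2$.

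Using $1 - y_n^* = -(c_n-1)^2/(2c_n)$, the rescaled location satisfies $\wt y_t\topp n = 2n(1-y_n^*) \to -(t+\C)^2 = y_t^{*,\C}$. For the mass, I invoke the atom-mass formula from Section~\ref{sec:AWP}, with the roles of $a$ and $\mathfrak e = c$ interchanged by symmetry of the Askey--Wilson measure in its four parameters:
\[
\wt\pi_t^{(n),\rmd}\pp{\wt y_t\topp n} = \frac{\qp{c_n^{-2}, a_nb_n, a_nd_n, b_nd_n}}{\qp{a_n/c_n, b_n/c_n, d_n/c_n, a_nb_nc_nd_n}}.
\]
Writing $(x;q)_\infty = (1-x)(xq;q)_\infty$ and using
\[
1-c_n^{-2} = -\frac{2(t+\C)}{\sqrt n} + o\pp{\tfrac1{\sqrt n}}, \qquad 1 - a_n/c_n = -\frac{2t+\C-\A}{\sqrt n} + o\pp{\tfrac1{\sqrt n}},
\]
the factor $(c_n^{-2};q)_\infty$ in the numerator and $(a_n/c_n;q)_\infty$ in the denominator each vanish like the above coefficient times $(q;q)_\infty/\sqrt n$. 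The six remaining factors $(a_nb_n;q)_\infty,(a_nd_n;q)_\infty,(b_nd_n;q)_\infty,(b_n/c_n;q)_\infty,(d_n/c_n;q)_\infty,(a_nb_nc_nd_n;q)_\infty$ converge respectively to the nonzero limits $(B;q)_\infty,(D;q)_\infty,(BD;q)_\infty,(B;q)_\infty,(D;q)_\infty,(BD;q)_\infty$, which cancel pairwise across numerator and denominator. The $(q;q)_\infty/\sqrt n$ factors cancel as well, yielding $\lim_{n\to\infty}\wt\pi_t^{(n),\rmd}(\wt y_t\topp n) = 2(\C+t)/(2t+\C-\A)$. This is positive because $t+\C<0$ and $2t<\A-\C$, both of which are immediate from $t<\A<-\C$.

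I do not anticipate any serious obstacle. The argument amounts to tracking which $q$-Pochhammer factors vanish at order $1/\sqrt n$ and which converge to finite nonzero limits; the assumptions $B,D\in(-1,0]$ and $t<\A<-\C$ keep the spectator parameters bounded away from $1$ and from $q^{-\ell}$ for $\ell\ge 1$, so no unexpected cancellation occurs, and only the two identified factors contribute at leading order.
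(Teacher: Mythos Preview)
Your proof is correct and follows essentially the same approach as the paper: identify that only the parameter $c_n=C_n e^{-t/\sqrt n}$ generates an atom (and exactly one), compute the rescaled location from $(c_n+c_n^{-1})/2$, and extract the mass limit by isolating the two $q$-Pochhammer factors that vanish at order $1/\sqrt n$ while the remaining six converge to nonzero limits that cancel pairwise. The paper carries out the same computation with $t_n/C_n^2$ and $A_nt_n/C_n$ in place of your $c_n^{-2}$ and $a_n/c_n$, which are identical quantities.
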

\begin{proof}
In view of the discussion below \eqref{eq:t_n}, for sufficiently large $n$, the signed measure $\pi\topp n_{t_n}$ has a single atom  
generated by $C_n/\sqrt{t_n}$, at the location
\[
y_{t_n}\topp n := \frac12\pp{\frac{C_n}{\sqrt{t_n}}+\frac{\sqrt {t_n}}{C_n}}.
\] 
Hence the signed measure $\wt\pi_{t}\topp n$ has a single atom at  
\begin{equation}\label{eq:loc of atom}
\wt y_t\topp n:=2n\pp{1-y_{t_n}\topp n}=-n \pp{\frac{C_n}{\sqrt{t_n}} + \frac{\sqrt {t_n}}{C_n}-2} \quad\text{with}\quad \lim_{n\rightarrow\infty}\wt y_t\topp n 
=-(t+\C)^2.
\end{equation}

The mass of this atom is
\begin{align*}
\wt \pi_t^{(n),\rmd}\pp{\wt y_t\topp n}& = \pi_{t_n}^{(n)}\pp{\ccbb{y_{t_n}\topp n}} = \frac{\ds \qpp{\frac{t_n}{C_n^2},A_nD_n, B_nD_n, A_nB_nt_n}}{\ds \qpp{\frac{D_n}{C_n},\frac{A_nt_n}{C_n},\frac{B_nt_n}{C_n},A_nB_nC_nD_n}} \\
&\sim \frac{\ds 1-\frac{t_n}{C_n^2}}{\ds 1-\frac{A_nt_n}{C_n}}\to \frac{\ds 2(\C+t)}{\ds 2t+\C-\A}>0, 
\end{align*}
where we have used:  
\[
    A_nD_n, \frac{D_n}{C_n}\to D\in(-1,0],\quad B_nD_n,A_nB_nC_nD_n\to BD\in[0,1),
    \]
    \[A_nB_nt_n,\frac{B_nt_n}{C_n}\to B\in(-1,0],   \left(\frac{qt_n}{C_n^2};q\right)_{\infty}, \left(\frac{qA_nt_n}{C_n};q\right)_{\infty}\rightarrow(q;q)_{\infty},
    \]
    \[
    1-\frac{t_n}{C_n^2}\sim\frac{-2(\C+t)}{\sqrt{n}}\qmand    1-\frac{A_nt_n}{C_n}\sim\frac{-(2t+\C-\A)}{\sqrt{n}}.
\]
We conclude the proof of the lemma. 
\end{proof}

\begin{lemma}\label{lem:marginal cont}
    Assume $\A+\C<0$ and $t\neq(\A-\C)/2$. For sufficiently large $n$, the continuous part of   $\wt\pi_t\topp n$ is supported over $[0,4n]$ and with 
     density function  $\wt\pi_t^{(n),\rmc}(u)$ satisfying
\begin{equation}\label{eq:lim pi c}
\limn\wt\pi_t^{(n),\rmc}(u) = \frac{\ds \A+\C}{\ds \pi} \frac{\ds \sqrt u}{\ds ((\A-t)^2+u)((\C+t)^2+u)}\leq0, \quad u\geq0.
\end{equation}
Moreover, there exist constants $N,K>0$ (possibly depend on $t$) such that for all $n>N$,
\begin{equation}\label{eq:DCT bound mar}
     \abs{\wt\pi_t^{(n),\rmc}(u)}\leq K\sqrt{u},\quad u\in[0,4n].
\end{equation}
\end{lemma}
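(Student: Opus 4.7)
The plan is to substitute the explicit Askey--Wilson density~\eqref{eq:f} with parameters $a_n = A_n e^{t/\sqrt n}$, $b_n = B_n e^{t/\sqrt n}$, $c_n = C_n e^{-t/\sqrt n}$, $d_n = D_n e^{-t/\sqrt n}$, together with the change of variable $y = 1 - u/(2n)$, and then track asymptotics carefully. The support claim $[0, 4n]$ is immediate from $y \in [-1, 1]$. First I would rewrite the density in terms of $\theta = \arccos y$, for which $\sin\theta = \sqrt{(u/n)(1 - u/(4n))}$, and use $|1 - e^{2\i\theta}|^2 = 4\sin^2\theta$ to absorb one factor of $\sin\theta$ from $\sqrt{1-y^2}$; this puts the density in the form
\[
\wt\pi_t^{(n),\rmc}(u) = \frac{\sin\theta}{n\pi}\cdot\frac{\qp{q,a_nb_n,a_nc_n,a_nd_n,b_nc_n,b_nd_n,c_nd_n}\,\abs{\qp{qe^{2\i\theta}}}^2}{\qp{a_nb_nc_nd_n}\,\abs{\qp{a_ne^{\i\theta},b_ne^{\i\theta},c_ne^{\i\theta},d_ne^{\i\theta}}}^2}.
\]

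The pointwise convergence~\eqref{eq:lim pi c} will then follow from a careful Taylor expansion of each factor. The non-obvious inputs are: $1 - A_nC_n \sim (\A+\C)/\sqrt n$ (since $t$ cancels between $a_n$ and $c_n$), giving $\qp{a_nc_n} \sim (\A+\C)\qp q/\sqrt n$; the exact algebraic identities
\[
|1 - a_n e^{\i\theta}|^2 = (1-a_n)^2 + a_n u/n,\qquad |1 - c_n e^{\i\theta}|^2 = (1-c_n)^2 + c_n u/n,
\]
combined with $\sqrt n(1-a_n)\to \A-t$ and $\sqrt n(1-c_n)\to \C+t$, yield the asymptotics $((\A-t)^2+u)/n$ and $((\C+t)^2+u)/n$ for the leading $j=0$ factors in the denominator; $\sin\theta/n \sim \sqrt u/n^{3/2}$; and every remaining Pochhammer factor converges to a positive constant. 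The ambient powers of $\qp q, \qp B, \qp D, \qp{BD}$ cancel between numerator and denominator, producing exactly~\eqref{eq:lim pi c}.

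For the uniform bound~\eqref{eq:DCT bound mar}, the strategy is to upgrade each asymptotic equivalence above to a two-sided inequality valid for all $u \in [0, 4n]$. The core is the elementary observation $(1-a_n)^2 + a_n u/n \geq c((\A - t)^2 + u)/n$ for $n$ large (using $a_n > 0$ and $\A - t \neq 0$), and analogously $(1-c_n)^2 + c_n u/n \geq c((\C+t)^2+u)/n$ using $\C + t \neq 0$; these are precisely the points where the statement implicitly requires $t \notin \{\A, -\C\}$. The remaining Pochhammer moduli are bounded below and above uniformly in $\theta$ since $|b_n|, |d_n|, |a_n|q, |c_n|q$ all stay strictly below $1$ for $n$ large, so $|1 - b_n q^j e^{\i\theta}| \geq 1 - |b_n|q^j > 0$ and similarly for the other tails. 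Collecting the $n$-powers, $\qp{a_nc_n}$ in the numerator supplies an $O(n^{-1/2})$, each of $\abs{\qp{a_ne^{\i\theta}}}^{-2}$ and $\abs{\qp{c_ne^{\i\theta}}}^{-2}$ supplies an $O(n)$, and $\sin\theta\leq\sqrt{u/n}$, yielding after cancellation
\[
|\wt\pi_t^{(n),\rmc}(u)| \leq \frac{K'\sqrt u}{((\A-t)^2+u)((\C+t)^2+u)} \leq \frac{K'\sqrt u}{(\A-t)^2(\C+t)^2} = K\sqrt u.
\]

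The principal technical obstacle is maintaining these lower bounds uniformly across the entire range $u \in [0, 4n]$, in particular near $u \approx 4n$ (where $\theta$ is close to $\pi$ and the small-$\theta$ Taylor expansion used for pointwise convergence breaks down). This is why one must rely on the exact algebraic identity $|1 - a e^{\i\theta}|^2 = (1-a)^2 + au/n$, rather than on asymptotic expansions, to control the denominator Pochhammer factors globally.
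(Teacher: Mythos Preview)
Your argument is correct and is essentially a self-contained derivation of what the paper invokes by citation: the paper's own proof simply writes down the density as $\frac{1}{2n}f(1-u/(2n);\,A_n\sqrt{t_n},B_n\sqrt{t_n},C_n/\sqrt{t_n},D_n/\sqrt{t_n},q)$ and then appeals to \cite[Lemma~4.4]{bryc23asymmetric}, noting that the sign assumption $\A+\C>0$ imposed there is irrelevant to the computation. Your exact identity $|1-ae^{\i\theta}|^2=(1-a)^2+au/n$ and the tracking of $n$-powers is precisely the content of that cited lemma.

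One point where you are in fact sharper than the paper's stated hypotheses: you correctly flag that the uniform bound~\eqref{eq:DCT bound mar} uses $t\notin\{\A,-\C\}$, so that $(\A-t)^2>0$ and $(\C+t)^2>0$. The lemma as written only assumes $t\neq(\A-\C)/2$ (which is what guarantees $e^{2t/\sqrt n}\in\mathbb T_n$), but if, say, $t=\A$, then the pointwise limit is $\frac{\A+\C}{\pi}\cdot\frac{1}{\sqrt u\,((\C+\A)^2+u)}$, which cannot be dominated by $K\sqrt u$ near $u=0$. This is a genuine (minor) imprecision in the statement; the paper is protected because every application---Lemma~\ref{lem:Zn} with $t=0$ under $\A,\C\neq 0$, and Proposition~\ref{prop:limit Laplace} with $t=-s_1$, where $-s_1\le -c_d<\A$ and $-s_1<-c_d<\A<-\C$ force $t\neq\A,-\C$---does satisfy the extra constraint. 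Your parenthetical ``the statement implicitly requires $t\notin\{\A,-\C\}$'' is therefore exactly right.
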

\begin{proof}
    The continuous part of $\wt\pi_t\topp n$ has density
\[
\wt\pi_{t}^{(n),\rmc}(u) =\frac{1}{2n} f\lb1-\frac{u}{2n}; A_n\sqrt{t_n},B_n\sqrt{t_n},\frac{C_n}{\sqrt{t_n}},\frac{D_n}{\sqrt{t_n}},q\rb,\quad u\in[0,4n],
\]
with function $f$ given by \eqref{eq:f}. 
The limits \eqref{eq:lim pi c} and the bounds \eqref{eq:DCT bound mar} both follow from \citep[Lemma 4.4]{bryc23asymmetric}. Note that, although \citep[Lemma 4.4]{bryc23asymmetric} requires in its statement that $\A+\C>0$ and $-\C<t<\A$, these constraints do not, in fact, play any role in the proofs. Moreover, the uniform bound \citep[Lemma 4.4 (ii)]{bryc23asymmetric} (which is the analogue of \eqref{eq:DCT bound mar}) assumes that $u\in[0,2n]$. This constraint is only utilized in the proof to address the special cases where $B=-1$ or $D=-1$. We have avoided these cases in Assumption \ref{assump:0}; hence, this constraint can also be dropped. We conclude the proof.
\end{proof}

\begin{lemma}\label{lem:transition}
    Assume 
   that $s<t<\A<-\C$. Consider the signed measure $\wt P_{s,t}\topp n\lb u,\d v\rb$ defined by \eqref{eq:tran} and \eqref{eq:measures n}. 
    \begin{enumerate}[(i)]
    \item When $u\in[0,4n]$, for sufficiently large $n$, $\wt P_{s,t}\topp n\lb u,\d v\rb$ is a probability measure without discrete atoms. The continuous part is supported on $v\in[0,4n]$ with density $\wt P_{s,t}^{(n),\rmc,\rmc}(u,v)$ 
    satisfying
   \begin{equation}\label{eq:trans c to c}
   \limn\wt P_{s,t}^{(n),\rmc,\rmc}(u,v)  =
\p_{t-s}(u,v)\frac{\ds (\A-s)^2+u}{\ds (\A-t)^2+v}\geq0,\quad u,v\geq0,
\end{equation}
where
\begin{equation}\label{eq:def of p}
\p_{t}(u,v) = \frac{\ds 2t\sqrt v}{\ds \pi(t^4+2(u+v)t^2+ (u-v)^2)}.
\end{equation}
\item 
For sufficiently large $n$, when $u=\wt y_s\topp n$ (defined by \eqref{eq:loc of atom}) is the single atom of $\wt\pi_s\topp n$, the signed measure $\wt P_{s,t}\topp n(\wt y_s\topp n,\d v)$ has a single atom at $v=\wt y_t\topp n$, with mass $\wt P_{s,t}^{(n),\rmd,\rmd}(\wt y_s\topp n,\wt y_t\topp n)$ 
  satisfying
\begin{equation}\label{eq:trans d to d}
    \lim_{n\rightarrow\infty}\wt P_{s,t}^{(n),\rmd,\rmd}\lb\wt y_s\topp n,\wt y_t\topp n\rb=\frac{\ds (\C+t)(2s+\C-\A )}{\ds (\C+s)(2t+\C-\A )}.
\end{equation}
The continuous part of $\wt P_{s,t}\topp n(\wt y_s\topp n,\d v)$ is supported on $v\in[0,4n]$, with density function $\wt P_{s,t}^{(n),\rmd,\rmc}(\wt y_s\topp n,v)$ 
 satisfying
\begin{equation}\label{eq:trans d to c}
\limn\wt P_{s,t}^{(n),\rmd,\rmc}\lb\wt y_s\topp n,v\rb 
 = \frac{\ds \A+\C}{\ds\pi}\frac{\ds\sqrt v}{\ds((\A-t)^2+v)((\C+t)^2+v)}  \frac{\ds(\A-\C-2s)(2t-2s)}{\ds(2s-t+\C)^2+v}, v\geq0, 
\end{equation}
and that there exist constants $N,K>0$ (possibly depend on $s$ and $t$) such that for all $n>N$,
\begin{equation}\label{eq:DCT bound trans}
    \abs{\wt P_{s,t}^{(n),\rmd,\rmc}\lb\wt y_s\topp n,v\rb }\leq K\sqrt{v},\quad v\in[0,4n].
\end{equation}
\end{enumerate}
\end{lemma}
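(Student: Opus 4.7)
The plan is direct asymptotic analysis of the explicit Askey--Wilson density \eqref{eq:f} and atom mass formulas, applied to the transition kernel at times $s_n=e^{2s/\sqrt n}$, $t_n=e^{2t/\sqrt n}$. By definition, for $x\in\mathbb U_{s_n}$,
\[
P_{s_n,t_n}\topp n(x,\d y) = \nu\pp{\d y;\, a_n,b_n,c_n,d_n,q},
\]
with
\[
a_n = A_n e^{t/\sqrt n},\quad b_n=B_n e^{t/\sqrt n},\quad c_n=e^{(s-t)/\sqrt n}\lb x+\sqrt{x^2-1}\rb,\quad d_n=e^{(s-t)/\sqrt n}\lb x-\sqrt{x^2-1}\rb.
\]
Under the change of variables $y=1-v/(2n)$ one has $\theta_y\sim\sqrt{v/n}$, so the fundamental asymptotic building block is: whenever $\alpha_n\to 1$ with $\sqrt n(1-\alpha_n)\to\alpha_\infty\in\R$, one has $|1-\alpha_n e^{i\theta_y}|^2\sim (\alpha_\infty^2+v)/n$, and analogously for $|1-\alpha_ne^{i\phi_n}|^2$ with $\phi_n\to 0$ at rate $1/\sqrt n$.

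For part (i) with $x=1-u/(2n)\in[-1,1]$, the parameters $c_n,d_n$ form a complex conjugate pair of modulus $e^{(s-t)/\sqrt n}<1$, $a_n<1$ for large $n$ (since $t<\A$), and $b_n\to B\in(-1,0]$. No parameter has modulus $\ge 1$, so no atoms arise and $\wt P_{s,t}\topp n(u,\d v)$ is a genuine probability measure with purely continuous part on $[0,4n]$. Substituting into \eqref{eq:f}, the Pochhammer factors $|(e^{2i\theta_y};q)_\infty|^2$, $(1-a_nc_n)(1-a_nd_n)$, $(1-c_nd_n)$ in the numerator and $|(\alpha_n e^{i\theta_y};q)_\infty|^2$ with $\alpha_n\in\{a_n,c_n,d_n\}$ in the denominator vanish at rates controlled by the building block, while all remaining Pochhammers tend to positive constants; together with the Jacobian $1/(2n)$, the vanishing factors combine into the four polynomial factors appearing in \eqref{eq:trans c to c} via $\p_{t-s}(u,v)$ from \eqref{eq:def of p}.

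For part (ii) at the atom $u=\wt y_s\topp n$, i.e.\ $x=y_{s_n}\topp n>1$, one verifies $x+\sqrt{x^2-1}=C_n/\sqrt{s_n}$ (the larger root) and $x-\sqrt{x^2-1}=\sqrt{s_n}/C_n$, so $c_n=C_ne^{-t/\sqrt n}$ and $d_n=e^{(2s-t)/\sqrt n}/C_n$. Under $s<t<\A<-\C$, elementary checks give $a_n<1$, $b_n\to B\in(-1,0]$, $c_n>1$ (because $\C+t<0$) and $d_n<1$ (because $s<t<(t-\C)/2$ forces $2s-t+\C<0$). Hence only $c_n$ generates an atom, at $\frac12(c_n+c_n^{-1})=y_{t_n}\topp n$, corresponding to $v=\wt y_t\topp n$. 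Applying the atom mass formula with generating parameter $c_n$ and the expansions $1-t_n/C_n^2\sim -2(\C+t)/\sqrt n$, $1-a_nt_n/C_n\sim -(2t+\C-\A)/\sqrt n$ (and analogous ones for the other vanishing Pochhammers) yields \eqref{eq:trans d to d}. For \eqref{eq:trans d to c}, substituting $y=1-v/(2n)$ into \eqref{eq:f} with the new parameters and reapplying the building block gives the limit; the extra factor $(\A-\C-2s)(2t-2s)/((2s-t+\C)^2+v)$ compared with \eqref{eq:lim pi c} arises precisely because $c_n$ and $d_n$ now both tend to $1$ at rate $1/\sqrt n$, contributing additional vanishing Pochhammers whose rearrangement produces this term.

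The uniform envelope \eqref{eq:DCT bound trans} is obtained by adapting the argument of \citep[Lemma 4.4 (ii)]{bryc23asymmetric}: one bounds every Pochhammer factor in \eqref{eq:f} uniformly on $[0,4n]$, relying on $B_n,D_n,B,D>-1$ in Assumption \ref{assump:0} to keep the relevant prefactors bounded, and combines these bounds with the $\sqrt{1-y^2}\sim\sqrt{v/n}$ behavior to produce a global bound of the form $K\sqrt v$. The main technical obstacle is the bookkeeping in part (ii): because $c_n$ and $d_n$ approach $1$ simultaneously at rate $1/\sqrt n$, several factors in both numerator and denominator of \eqref{eq:f} vanish at the same time, and they must be paired carefully to isolate the correct finite nonzero leading order in \eqref{eq:trans d to d}--\eqref{eq:trans d to c}. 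As a consistency check, the mass \eqref{eq:trans d to d} plus the $v$-integral of \eqref{eq:trans d to c} should equal $1$, which is forced by the total mass identity for Askey--Wilson signed measures.
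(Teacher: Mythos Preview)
Your proposal is correct and follows essentially the same approach as the paper's own proof: direct asymptotic analysis of the Askey--Wilson density \eqref{eq:f} and atom mass formulas at the parameters $a_n=A_n e^{t/\sqrt n}$, $b_n=B_ne^{t/\sqrt n}$, $c_n=C_ne^{-t/\sqrt n}$, $d_n=e^{(2s-t)/\sqrt n}/C_n$ (in part (ii)), with part (i) handled by citing \citep[Lemma~4.6]{bryc23asymmetric}. The only minor difference is that for the uniform bound \eqref{eq:DCT bound trans} the paper isolates an explicit quotient $W_n(s,t,v)$ capturing all the factors vanishing at rate $n^{-1/2}$ and bounds it separately, rather than simply invoking an adaptation of \citep[Lemma~4.4(ii)]{bryc23asymmetric}; the underlying mechanism is the same.
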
 
\begin{remark}
The function $\p_{t}(u,v)$ defined by \eqref{eq:def of p}
is the transitional probability density function of the square of the radical part of 3-dimensional Cauchy process (a.k.a.~$1/2$-stable Biane process \citep{kyprianou22doob,bryc16local,biane98processes}). 
\end{remark}
\begin{proof}
We 
write
 $s_n = e^{2s/\sqrt n}$ and $t_n = e^{2t/\sqrt n}$. When $u\in[0,4n]$,
 writing $x_n = 1-u/(2n)$, we have:
\begin{align*}
    \wt P_{s,t}^{(n)}&\lb u,\d v\rb = P\topp n_{s_n,t_n}\lb x_n,1-\frac{v}{2n}\rb\\
&=\nu\pp{1-\frac{v}{2n}; A_n\sqrt{t_n}, B_n\sqrt{t_n},\sqrt{\frac{s_n}{t_n}}\lb x_n+\sqrt{x_n^2-1}\rb, \sqrt{\frac{s_n}{t_n}}\lb x_n-\sqrt{x_n^2-1}\rb,q},
\end{align*}
which does not have atoms since all the four parameters are with absolute value less than 1.  The limit \eqref{eq:trans c to c} of the continuous  density follows from \citep[Lemma 4.6]{bryc23asymmetric}. Note that \citep[Lemma 4.6]{bryc23asymmetric} requires that $\A+\C>0$. As one can observe, this constraint does not play a role in its proof.

When $u=\wt y_s\topp n$ is the single atom of $\wt\pi_s\topp n$, the limit \eqref{eq:trans d to d} of the atom mass follows from:
\begin{align*} 
\wt P_{s,t}^{(n),\rmd,\rmd}\lb\wt y_s\topp n,\wt y_t\topp n\rb
&  = \frac{\ds\qpp{\frac{t_n}{C_n^2},A_nB_nt_n, \frac{A_ns_n}{C_n}, \frac{B_ns_n}{C_n}}}{\ds\qpp{\frac{A_nt_n}{C_n},\frac{B_nt_n}{C_n},\frac{s_n}{C_n^2},A_nB_ns_n}}  
 \sim \frac{\ds\lb1-\frac{t_n}{C_n^2}\rb\lb1-\frac{A_ns_n}{C_n}\rb}{\ds\lb1-\frac{A_nt_n}{C_n}\rb\lb1-\frac{s_n}{C_n^2}\rb} \\
 &\to \frac{\ds(\C+t)(2s+\C-\A )}{\ds(\C+s)(2t+\C-\A )},
\end{align*}
where we have used: 
\[A_nB_nt_n, \frac{B_ns_n}{C_n}, \frac{B_nt_n}{C_n}, A_nB_ns_n\to B\in(-1,0],\]
\[
 1-\frac{s_n}{C_n^2}\sim\frac{-2( \C+s
)}{\sqrt{n}} 
,\quad
1-\frac{A_ns_n}{C_n}\sim\frac{-(2s+\C-\A)}{\sqrt{n}} \hspace{0.5em} \mbox{ and } \hspace{0.5em}1-\frac{A_nt_n}{C_n}\sim\frac{-(2t+\C-\A)}{\sqrt{n}}.
\]  
The limit \eqref{eq:trans d to c} of the continuous density follows from (write $z_n=1-v/(2n)$):
\begin{align*}
\wt P^{(n),\rmd,\rmc}_{s,t}  \lb\wt y_s\topp n,v\rb
&= \frac{\ds1}{\ds2n}f\pp{z_n; A_n\sqrt{t_n}, B_n\sqrt{t_n},\frac{C_n}{\sqrt{t_n}},\frac{s_n}{C_n\sqrt{t_n}},q}\\
& = \frac{\ds1}{\ds2n}\frac{\ds\lb q,A_nB_nt_n,A_nC_n,\frac{A_ns_n}{C_n},B_nC_n,\frac{B_ns_n}{C_n},\frac{s_n}{t_n};q\rb_{\infty}}{\ds2\pi \qpp{A_nB_ns_n}\sqrt{1-z_n^2}}\\
& \quad\quad\times \abs{\frac{\ds\qpp{e^{2\i\theta_{z_n}}}}{\ds\qpp{A_n\sqrt{t_n}e^{\i \theta_{z_n}},B_n\sqrt{t_n}e^{\i\theta_{z_n}},\frac{C_n}{\sqrt{t_n}}e^{\i \theta_{z_n}},\frac{s_n}{C_n\sqrt {t_n}}e^{\i\theta_{z_n}}}}}^2\\
&\sim \frac1{2n}\frac{\ds(1-A_nC_n)\lb1-\frac{A_ns_n}{C_n}\rb\lb1-\frac{s_n}{t_n}\rb}{\ds2\pi\sqrt{1-z_n^2}}\\
& \quad\quad\times 
  \frac{\ds\abs{1-e^{2\i\theta_{z_n}}}^2}{\ds\abs{1-A_n\sqrt{t_n}e^{\i\theta_{z_n}}}^2\abs{1-\frac{C_n}{\sqrt{t_n}}e^{\i\theta_{z_n}}}^2\abs{1-\frac{s_n}{C_n\sqrt{t_n}}e^{\i\theta_{z_n}}}^2}\\
&\to \frac{\ds\A+\C}{\ds\pi}\frac{\ds\sqrt v}{\ds((\A-t)^2+v)((\C+t)^2+v)}  \frac{\ds(\A-\C-2s)(2t-2s)}{\ds(2s-t+\C)^2+v},
\end{align*}
where we have used:
\[ 
A_nB_nt_n, B_nC_n,\frac{B_ns_n}{C_n}, A_nB_ns_n, B_n\sqrt{t_n}e^{\i\theta_{z_n}}\to B\in(-1,0],
\] 
\begin{equation}\label{eq:asy}1-A_nC_n\sim\frac{\A+\C}{\sqrt{n}},\quad 1-\frac{A_ns_n}{C_n}\sim\frac{-(2s+\C-\A)}{\sqrt{n}},\quad1-\frac{s_n}{t_n}\sim\frac{2(t-s)}{\sqrt{n}}, \end{equation} 
\begin{multline}\label{eq:asy2}
   \abs{1-e^{2\i\theta_{z_n}}}^2 = 4(1-z_n^2),\quad \sqrt{1-z_n^2}\sim\sqrt{\frac{v}{n}},\quad  \abs{1-A_n\sqrt{t_n}e^{\i\theta_{z_n}}}^2\sim \frac{(\A-t)^2+v}{n},\\ \abs{1-\frac{C_n}{\sqrt{t_n}}e^{\i\theta_{z_n}}}^2\sim \frac{(\C+t)^2+v}{n}\hspace{0.5em} \mbox{ and } \hspace{0.5em} \abs{1-\frac{s_n}{C_n\sqrt{t_n}}e^{\i\theta_{z_n}}}^2\sim\frac{(2s-t+\C)^2+v}{n}.
\end{multline}
Some of the asymptotics above have been calculated in \cite[Proof of Lemma 4.4]{bryc23asymmetric}. 

Next we prove the bound \eqref{eq:DCT bound trans}. Denote:
\begin{equation}\label{eq: Wn}
    W_n(s,t,v):= \frac{\ds(1-A_nC_n)\lb1-\frac{A_ns_n}{C_n}\rb\lb1-\frac{s_n}{t_n}\rb\sqrt{1-z_n^2}}{\ds n \abs{1-A_n\sqrt{t_n}e^{\i\theta_{z_n}}}^2  \abs{1-\frac{C_n}{\sqrt{t_n}}e^{\i\theta_{z_n}}}^2\abs{1-\frac{s_n}{C_n\sqrt{t_n}}e^{\i\theta_{z_n}}}^2}. \end{equation}
Notice that \[0<\lb|z|;q\rb_{\infty}\leq\abs{(z;q)_{\infty}}\leq \lb-|z|;q\rb_{\infty}\quad\text{for } |z|<1.\] One can observe that, as $n\rightarrow\infty$, the quotient 
\begin{multline*}
    \frac{\wt P^{(n),\rmd,\rmc}_{s,t}  \lb\wt y_s\topp n,v\rb}{W_n(s,t,v)}=  \frac{\ds\lb q,A_nB_nt_n,qA_nC_n,q\frac{A_ns_n}{C_n},B_nC_n,\frac{B_ns_n}{C_n},q\frac{s_n}{t_n};q\rb_{\infty}}{\ds \pi \qpp{A_nB_ns_n} }\\
\times   \abs{\frac{\ds\qpp{qe^{2\i\theta_{z_n}}}}{\ds\qpp{qA_n\sqrt{t_n}e^{\i \theta_{z_n}},B_n\sqrt{t_n}e^{\i\theta_{z_n}},q\frac{C_n}{\sqrt{t_n}}e^{\i \theta_{z_n}},q\frac{s_n}{C_n\sqrt {t_n}}e^{\i\theta_{z_n}}}}}^2>0
\end{multline*}
is uniformly (in $v\in[0,4n]$) bounded from above by a finite positive constant. 
Hence we only need to show that there exist constants $N,K>0$ (possibly depend on $s$ and $t$) such that for all $n>N$,
\begin{equation}\label{eq:only need}
     \abs{W_n(s,t,v)}\leq K\sqrt{v},\quad v\in[0,4n].
\end{equation}
We analyze each term in \eqref{eq: Wn}. Notice that $\sqrt{1-z_n^2}\leq\sqrt{v/n}$.  
In view of \eqref{eq:asy} and \eqref{eq:asy2}, there exist $N>0$ (possibly depends on $s$ and $t$) such that for all $n>N$, 
 we have 
\[
    \abs{1-A_nC_n}\leq\frac{\ds2\abs{\A+\C}}{\ds\sqrt{n}},\quad \abs{1-\frac{A_ns_n}{C_n}}\leq\frac{2\abs{2s+\C-\A}}{\sqrt{n}},\quad\abs{1-\frac{s_n}{t_n}}\leq\frac{2\abs{t-s}}{\sqrt{n}},
    \]
  \[
    \abs{1-A_n\sqrt{t_n}e^{\i\theta_{z_n}}}^2\geq \frac{(\A-t)^2}{2n},
\abs{1-\frac{C_n}{\sqrt{t_n}}e^{\i\theta_{z_n}}}^2\geq \frac{(\C+t)^2}{2n},\]
and
\[
\abs{1-\frac{s_n}{C_n\sqrt{t_n}}e^{\i\theta_{z_n}}}^2\geq\frac{(2s-t+\C)^2}{2n}.
\] 
By the estimates above, we conclude the proof of  bound \eqref{eq:only need} and hence \eqref{eq:DCT bound trans}. The proof is concluded.
\end{proof}

\begin{remark}\label{rem:fan} In the case $A_nC_n<1$ studied in \citep{bryc23asymmetric}, there is an intuitive and probabilistic explanation of the scaling of our interest. Namely, it is related to the notion of tangent processes in the studies of extreme values of stochastic processes. In this case, we have
\[
\Phi_d\topp n(\vv x,\vvc) = 2^{-n}\esp \lee\prodd k1d\pp{\cosh\pp{\frac{s_k}{\sqrt n}}+Y_{e^{-2s_k/\sqrt n}}\topp n}^{n_k-n_{k-1}}\ree,
\]
and $Z_n = 2^{-n}\esp(1+Y_1\topp n)^n$, 
where $Y\topp n$ is the Askey--Wilson process (mentioned in Remark \ref{rem:DEHP}) with marginal and transitional laws $\pi_t\topp n$ and $P_{s,t}\topp n$. 
What determines the limit of Laplace transform is the behavior of the process $Y\topp n$ near the upper boundary $1$ of the continuous domain $[-1,1]$, and more precisely the behavior of the transformed process
\[
\wt Y\topp n_t := 2n\pp{1-Y\topp n_{e^{2t/\sqrt n}}},\quad  t\in\R,
\]
which is also referred to as the 
(pre-limit)
 tangent process of $Y\topp n$.
It is known that as $n\rightarrow\infty$, the tangent process $\wt Y_t\topp n$ has a limit process, which has marginal and transitional laws respectively the limits of $\wt\pi_t\topp n$ and $\wt P_{s,t}\topp n$ 
with similar formulas as in Lemmas \ref{lem:marginal cont} and \ref{lem:transition}. Moreover, it is convenient to think about the tangent process jumping between the continuous part $[0,4n]$ and the atomic trajectory: for example, an integration with respect to $\wt \pi_{t_1,\dots,t_d}\topp n$ over the domain  $[0,4n]^d$ can be interpreted as an integration over the trajectories that the Markov process remains in the continuous domain (since $\wt P_{s,t}^{(n),\rmc,\rmd} = 0$ so that it cannot jump from the continuous domain to the atoms).

When $\A+\C<0$ (so that $A_nC_n>1$), this path-integration view remains to be useful. In particular, all the integrations we shall encounter later are categorized according their paths: they are either over a path that remains in the continuous domain from the very beginning, or starts in the atomic trajectory, stays there for a while, then jumps into the continuous domain and then stays there till the end. This is behind the decomposition \eqref{eq:decomposition} below.
\end{remark} 
\subsection{Limit of the Laplace transform}

\label{sec:limit Laplace}

 We need to take the 
  limit of:
\[
\varphi_{n,\vvx}\pp{\frac\vvc{\sqrt n}}  = 
\frac{\Phi_d\topp n(\vvx, \vvc)}{Z_n},
\]
as $n\to\infty$. 
Recall the integral representations of $\Phi_d\topp n(\vvx,\vvc)$ and $Z_n$ in Lemma \ref{lem:starting point}, and also that we assume $\A\ne\C$. 
We first examine the limit of~$Z_n$, and we shall need in addition $\A,\C\ne 0$. 
\begin{lemma}\label{lem:Zn}
Assume $\A+\C<0$, $\A\neq\C$, $\A,\C\neq0$ and
Assumption \ref{assump:0}. We have:
\[ 
\limn Z_n  = \calH\pp{\frac\A{\sqrt 2}, \frac\C{\sqrt 2}}  = \frac{\ds\A H(\A/ 2)-\C H(\C/ 2)}{\ds\A-\C}.
\] 
We recall that $H(x) =e^{x^2}\erfc(x)$ with $\erfc (x) = 
(2/\sqrt \pi)x\int_x^\infty e^{-t^2}\d t$.
\end{lemma}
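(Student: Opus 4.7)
The plan is to change variables $u=2n(1-y)$, under which
\[
Z_n=2^{-n}\int_\R(1+y)^n\pi_1\topp n(\d y) = \int_\R\pp{1-\frac u{4n}}^n\wt\pi_0\topp n(\d u),
\]
where $\wt\pi_0\topp n$ is the scaled Askey--Wilson signed measure from Section~\ref{sec:tangent} at $t=0$ (and $1\in\mathbb T_n$ for large $n$ since $\A\ne\C$). The factor $(1-u/(4n))^n$ converges pointwise to $e^{-u/4}$ and is bounded above by $e^{-u/4}$ on $[0,4n]$ by $\log(1-x)\le -x$.

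Next, I would split $\wt\pi_0\topp n$ into its continuous and atomic parts. Lemma~\ref{lem:marginal cont} at $t=0$ provides both the pointwise limit $\frac{\A+\C}{\pi}\cdot\frac{\sqrt u}{(\A^2+u)(\C^2+u)}$ and the uniform envelope $K\sqrt u$ on $[0,4n]$, so dominated convergence yields
\[
\int_0^{4n}\pp{1-\frac u{4n}}^n\wt\pi_0^{(n),\rmc}(u)\d u\longrightarrow\frac{\A+\C}{\pi}\int_0^\infty\frac{e^{-u/4}\sqrt u}{(\A^2+u)(\C^2+u)}\d u.
\]
For the atoms, whose existence is determined by the signs of $\A$ and $\C$ individually (the $A_n$-atom is present iff $A_n>1$ iff $\A<0$, the $C_n$-atom iff $\C<0$; at least one is present under $\A+\C<0$ and $\A\C\ne 0$), Lemma~\ref{lem:marginal atom} at $t=0$ gives, when $\C<0$, a $C_n$-atom with limiting location $-\C^2$ and mass $2\C/(\C-\A)$; its proof goes through whenever the atom exists, independently of the auxiliary condition $t<\A$ imposed in the statement. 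The same computation with $A\leftrightarrow C$ (permissible by the permutation symmetry of $\nu(\d y;a,b,c,d,q)$ in its four parameters) produces, when $\A<0$, an $A_n$-atom with limiting location $-\A^2$ and mass $2\A/(\A-\C)$. Summing these contributions,
\[
\limn Z_n = \frac{\A+\C}{\pi}\int_0^\infty\frac{e^{-u/4}\sqrt u}{(\A^2+u)(\C^2+u)}\d u + \frac{2\A\,e^{\A^2/4}}{\A-\C}\mathbf{1}_{\A<0} + \frac{2\C\,e^{\C^2/4}}{\C-\A}\mathbf{1}_{\C<0}.
\]

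The last step is to verify that this equals $(\A H(\A/2)-\C H(\C/2))/(\A-\C)$. Via $u=t^2$ and the classical identity $\int_0^\infty e^{-t^2/4}/(a^2+t^2)\d t=\frac{\pi}{2|a|}H(|a|/2)$, one obtains
\[
\int_0^\infty\frac{e^{-u/4}\sqrt u}{a^2+u}\d u = 2\sqrt\pi-\pi|a|H(|a|/2),
\]
and partial fractions $(\A^2-\C^2)^{-1}[(\A^2+u)^{-1}-(\C^2+u)^{-1}]\cdot(\A^2-\C^2)$ (times appropriate coefficients) reduce the continuous integral to terms in $|\A|H(|\A|/2)$ and $|\C|H(|\C|/2)$, with the two $2\sqrt\pi$ contributions cancelling. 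Applying $H(-x)=2e^{x^2}-H(x)$ to convert $|\A|H(|\A|/2)$ into $\pm\A H(\A/2)$ (and similarly for $\C$) generates exactly the compensating $2e^{\A^2/4}$ and $2e^{\C^2/4}$ terms that match the atom contributions. I expect this sign-case bookkeeping to be the main obstacle: three sub-cases (both $\A,\C<0$; only one negative) must be handled, and the exclusion $\A\C\ne 0$ is used here to prevent an atom location from colliding with $0$, which would otherwise require sharpening the uniform bound on the continuous density.
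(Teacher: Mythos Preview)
Your proposal is correct and follows essentially the same approach as the paper: change variables $u=2n(1-y)$, split into continuous and atomic parts, apply dominated convergence via Lemma~\ref{lem:marginal cont} for the continuous density, compute atom masses directly (the paper does this by hand in three sign cases rather than invoking Lemma~\ref{lem:marginal atom}), and then combine using partial fractions together with $H(-x)=2e^{x^2}-H(x)$. The only cosmetic difference is that you package the three cases $0<\A<-\C$, $\A<-\C<0$, $\A<0<-\C$ into indicators $\mathbf 1_{\A<0},\mathbf 1_{\C<0}$, whereas the paper writes them out separately; the algebra and the identification of the final expression with $\calH(\A/\sqrt2,\C/\sqrt2)$ are identical.
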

\begin{proof} 
We first compute the limit of the integral over continuous domain $[-1,1]$:
\begin{align}
    \limn 2^{-n}\int_{[-1,1]} (1+y)^n\pi_1\topp n(\d y) &  =\limn \int_0^\infty  \inddd{u\in[0,4n]}\lb1-\frac{u}{4n}\rb^n \wt \pi_0^{(n),\rmc}(u)\d u 
    \nonumber\\
&  = \int_0^\infty e^{-u/4}\frac{\A+\C}\pi\frac{\ds\sqrt u}{(\ds\A^2+u)(\C^2+u)}\d u 
\nonumber\\
& \nonumber= \int_0^\infty \pp{\frac1{\A^2+u}-\frac1{\C^2+u}}\frac{\ds\sqrt u}{\ds\pi(\C-\A)}e^{-u/4}\d u  \\&= \frac{\ds|\C| H(|\C|/2)-|\A|H(|\A|/2)}{\ds\C-\A}.\label{eq:continuous part} 
\end{align} 
The first step uses the transformation  $u=2n(1-y)$. The second step combines $\limn(1-u/(4n))^n= e^{-u/4}$,
\[ \lim_{n\rightarrow\infty}\wt \pi_0^{(n),\rmc}(u)=\frac{\ds\A+\C}{\ds\pi}\frac{\ds\sqrt u}{\ds(\A^2+u)(\C^2+u)}\]
(which follows from taking $t=0$ in \eqref{eq:lim pi c} in Lemma \ref{lem:marginal cont}) and the dominated convergence theorem 
using the bound \eqref{eq:DCT bound mar} in Lemma \ref{lem:marginal cont}. The last step uses the identity \citep[equation (4.38)]{bryc23asymmetric}:  
\[\int_0^\infty \frac{\sqrt u}{\alpha^2+u}e^{-u/4}du = \pi\pp{\frac2{\sqrt \pi} - |\alpha| H(|\alpha|/2)},\quad\text{ for any }\alpha\in\R.\]

Next we consider the contribution  from atoms. Recall the discussion after \eqref{eq:t_n}. There are three cases (we are with $t=0$ and hence $t_n = 1$):
\begin{enumerate}[(i)]
\item Case $0<\A<-\C$: for sufficiently large $n$ there is an atom generated by $C_n$ with value 
\[
y_1\topp{C_n}:=\frac{1}{2}\lb C_n+\frac{1}{C_n}\rb,\hspace{.15cm}\text{hence}\hspace{.15cm} 2^{-n}\lb1+y_1\topp{C_n}\rb^n=\lb1+\frac{1}{4C_n}(C_n-1)^2\rb^n\to e^{\C^2/4}. 
\]
This atom has mass
\begin{equation}\label{eq:mass case 1}
    \pi^{(n)}_1\lb \ccbb{y_1\topp{C_n}}\rb=\frac{\ds \qpp{\frac{1}{C_n^2},A_nD_n, B_nD_n, A_nB_n}}{\ds \qpp{\frac{D_n}{C_n},\frac{A_n}{C_n},\frac{B_n}{C_n},A_nB_nC_nD_n}}\sim\frac{\ds1-\frac{1}{C_n^2}}{\ds1-\frac{A_n}{C_n}} \rightarrow\frac{\ds2\C}{\ds\C-\A}.\end{equation}
\item Case $\A<-\C<0$: for sufficiently large $n$ there is an atom generated by $A_n$ with value
\[
y_1\topp{A_n}=\frac{1}{2}\lb A_n+\frac{1}{A_n}\rb,
\]
and we have
\[
 2^{-n}\lb1+y_1\topp{A_n}\rb^n=\lb1+\frac{1}{4A_n}(A_n-1)^2\rb^n\to e^{\A^2/4}.
 \] 
This atom has mass 
\begin{equation}\label{eq:mass case 2}
    \pi^{(n)}_1\lb \ccbb{y_1\topp{A_n}}\rb=\frac{\ds \qpp{\frac{1}{A_n^2},C_nD_n, B_nD_n, C_nB_n}}{\ds \qpp{\frac{D_n}{A_n},\frac{C_n}{A_n},\frac{B_n}{A_n},A_nB_nC_nD_n}}\sim\frac{\ds1-\frac{1}{A_n^2}}{\ds1-\frac{C_n}{A_n}} \rightarrow\frac{\ds-2\A}{\ds\C-\A}.
\end{equation} 
\item Case $\A<0<-\C$: both atoms above exist and with the same mass value \eqref{eq:mass case 1} and \eqref{eq:mass case 2} respectively.  
\end{enumerate} 
Therefore, it follows that
\begin{equation}\label{eq:contribution from atoms}
    \limn 2^{-n}\int_{\R\setminus[-1,1]} (1+y)^n\pi_1\topp n(\d y)=  
\begin{cases}
\displaystyle \frac{2\C}{\C-\A}e^{\C^2/4},& \mbox{ if } 0<\A<-\C,\\\\
\displaystyle \frac{-2\A}{\C-\A}e^{\A^2/4}, & \mbox{ if } \A<-\C<0,\\\\
\displaystyle \frac{2\C e^{\C^2/4}-2\A e^{\A^2/4}}{\C-\A},& \mbox{ if } \A<0<-\C.
\end{cases}
\end{equation}  
Combining \eqref{eq:continuous part} and \eqref{eq:contribution from atoms}, we have:
\[ 
\limn Z_n=\limn 2^{-n}\int_{\R}(1+y)^n \pi_1\topp n(\d y)  = \frac{\ds\A H(\A/ 2)-\C H(\C/ 2)}{\ds\A-\C}.
\] 
 We conclude the proof of the lemma.
\end{proof}
\begin{proposition}\label{prop:limit Laplace}
Assume $\A+\C<0$, $\A\neq\C$, $\A,\C\neq0$ and
Assumption \ref{assump:0}. 
For any $\vvc=(c_1,\dots,c_d)\in\R_+^d$  satisfying $-c_d<\A<-\C$  and any $\vv x = (x_1,\dots,x_d)$ satisfying $0=x_0<x_1<\cdots<x_d= 1$,
we have:
\[ 
\limn\varphi_{n,\vvx}\pp{\frac\vvc{\sqrt n}} = \limn
\frac{\Phi_d\topp n(\vvx, \vvc)}{Z_n} = \PhiB(\vvx,\vvc) \frac{\summ \ell0d \Phi_{d,\ell}(\vv x,\vvc)}{\calH\lb\A/\sqrt 2,\C/\sqrt 2\rb},
\] 
where
\[ 
\PhiB(\vvx,\vvc) = \exp\pp{\frac14\summ k1d s_k^2 (x_k-x_{k-1})}=\esp\lee\exp\pp{-\frac1{\sqrt 2}\summ k1d c_k\BB_{x_k}}\ree
\] 
is the Laplace transform of finite-dimensional distributions of a Brownian motion, 
and the quantities $\Phi_{d,\ell}$ for $\ell=0,\dots,d$ are given by \eqref{eq:wt dd}, \eqref{eq:wt d0}  and \eqref{eq:wt dl}   below. 
We recall that $s_k = c_k+\cdots+c_d$ for $k=1,\dots, d$. 
\end{proposition}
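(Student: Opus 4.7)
The plan is to prove Proposition \ref{prop:limit Laplace} by analyzing the integral representation \eqref{eq:integral numerator} of $\Phi_d^{(n)}(\vvx,\vvc)$ via the decomposition indicated in Remark \ref{rem:fan}. With the substitution $y_k = 1 - u_k/(2n)$, the relevant tangent measures are $\wt\pi_{-s_k}^{(n)}$ and $\wt P_{-s_{k-1},-s_k}^{(n)}$ (with times $t_k = -s_k$ in the formulas of Section \ref{sec:tangent}). The assumption $-c_d < \A < -\C$ guarantees $-s_d < -s_{d-1} < \cdots < -s_1 < \A < -\C$, so Lemmas \ref{lem:marginal atom}, \ref{lem:marginal cont}, and \ref{lem:transition} all apply in the desired form. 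A standard preliminary step shows that the contribution from any $y_k$ bounded away from $1$ is negligible, since $((\cosh(s_k/\sqrt n)+y_k)/2)^{n_k-n_{k-1}} \to 0$ exponentially there; this reduces the problem to the ``tangent'' scale.

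Next, I would split the domain of integration into $d+1$ disjoint pieces according to the trajectory type. Since Lemma \ref{lem:transition}(i) implies that $\wt P^{(n),\rmc,\rmd}_{s,t} \equiv 0$, once a coordinate lies in the continuous support $[0,4n]$, all subsequent coordinates must also lie in $[0,4n]$. Hence a trajectory is classified by the first index $\ell+1$ at which the coordinate is continuous, giving
\[
\Phi_d^{(n)}(\vvx,\vvc) = \sum_{\ell=0}^d \Phi_{d,\ell}^{(n)}(\vvx,\vvc),
\]
where in $\Phi_{d,\ell}^{(n)}$ the first $\ell$ coordinates sit on the atoms $\wt y_{-s_k}^{(n)}$ and the remaining $d-\ell$ coordinates are integrated over $[0,4n]$. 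For the asymptotics of each integrand factor I would use that, as $n\to\infty$,
\[
\Bigl(\frac{\cosh(s_k/\sqrt n)+y_k}{2}\Bigr)^{n_k-n_{k-1}} \longrightarrow
\begin{cases}
e^{(s_k^2-u_k)(x_k-x_{k-1})/4}, & y_k = 1-u_k/(2n),\\
e^{(s_k^2+(\C-s_k)^2)(x_k-x_{k-1})/4}, & y_k = y_{e^{-2s_k/\sqrt n}}^{(n)}.
\end{cases}
\]
The Brownian exponent $(1/4)\sum_k s_k^2(x_k-x_{k-1})$ then factors out cleanly to produce $\Phi^{\BB}(\vvx,\vvc)$ in each $\ell$-term.

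For each fixed $\ell$, combining these pointwise limits with the appropriate Askey--Wilson asymptotics gives $\Phi_{d,\ell}$. Specifically, Lemma \ref{lem:marginal atom} provides the limit of the initial atomic mass, Lemma \ref{lem:transition}(ii) (the atom-to-atom piece) gives the limit of $\wt P^{(n),\rmd,\rmd}_{-s_{k-1},-s_k}$ for the first $\ell$ transitions, Lemma \ref{lem:transition}(ii) (the atom-to-continuous piece) gives the ``bridging'' factor at step $\ell{+}1$ when $\ell \geq 1$ while Lemma \ref{lem:marginal cont} gives it when $\ell=0$, and Lemma \ref{lem:transition}(i) gives the continuous-to-continuous transitions on the remaining interval. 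The bounds \eqref{eq:DCT bound mar} and \eqref{eq:DCT bound trans}, together with the decaying factor $\exp(-\sum_{k>\ell} u_k(x_k-x_{k-1})/4)$, allow interchanging limit and integral via dominated convergence. Finally, dividing by $Z_n$ and using Lemma \ref{lem:Zn} yields the claimed expression in terms of $\calH(\A/\sqrt 2,\C/\sqrt 2)$.

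The main obstacle is the combinatorial and analytic bookkeeping of the decomposition into the $d+1$ strata: each $\Phi_{d,\ell}$ involves a mixture of atomic products and continuous $(d-\ell)$-fold integrals against the transition densities \eqref{eq:trans c to c}, \eqref{eq:trans d to d}, \eqref{eq:trans d to c}, and one must carefully track all the multiplicative constants arising from the atomic limits and the ``bridging'' factor at the jump from atom to continuum so that they assemble into the expressions \eqref{eq:wt dd}, \eqref{eq:wt d0}, \eqref{eq:wt dl}. In particular, one has to verify that the domination hypothesis holds uniformly in $n$ across all strata and all $d-\ell$ continuous coordinates, and that the cross terms produced when substituting the tangent-scale expansions of $\cosh(s_k/\sqrt n) + y_k$ are correctly absorbed into the Brownian prefactor $\Phi^{\BB}$ rather than being counted toward $\Phi_{d,\ell}$.
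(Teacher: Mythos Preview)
Your outline follows essentially the same strategy as the paper: decompose the tangent integral according to how many initial coordinates sit on the atom, use Lemmas \ref{lem:marginal atom}--\ref{lem:transition} for the pointwise limits, factor out $\Phi^{\BB}$, and divide by $Z_n$ via Lemma \ref{lem:Zn}.

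There is, however, a genuine gap in your justification of the limit exchange. You propose to pass to the limit in the $(d-\ell)$-fold continuous integral by dominated convergence, citing the bounds \eqref{eq:DCT bound mar} and \eqref{eq:DCT bound trans}. But neither of these controls the continuous--to--continuous kernels $\wt P^{(n),\rmc,\rmc}_{-s_{k-1},-s_k}(u_{k-1},u_k)$: Lemma \ref{lem:transition}(i) supplies only the pointwise limit \eqref{eq:trans c to c}, not a uniform integrable majorant for the product over $k=\ell+2,\dots,d$. So the domination hypothesis you invoke is not actually available from the stated lemmas, and the ``main obstacle'' you flag in your last paragraph is in fact unresolved.

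The paper circumvents this by exploiting a structural feature you do not mention: for $u\in[0,4n]$ the signed measure $\wt P^{(n)}_{s,t}(u,\cdot)$ is a \emph{genuine probability measure} (no atoms, nonnegative density, total mass one). Consequently, for fixed $u_{\ell+1}$, the product $\prod_{k=\ell+2}^d \wt P^{(n),\rmc,\rmc}_{-s_{k-1},-s_k}(u_{k-1},u_k)\,\d\vvu_{\ell+2:d}$ is a probability measure on $\R_+^{d-\ell-1}$, which by Scheff\'e's lemma converges weakly. One then applies Lemma \ref{lem:cite} (a Billingsley-type extension handling simultaneously varying functions and measures) to pass to the limit in the inner $(d-\ell-1)$-fold integral, obtaining a function $\mathsf G_{d,\ell}(u_{\ell+1})$ with the bound \eqref{eq:bound on Gn}. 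Only then is DCT applied, in the single variable $u_{\ell+1}$, against the density controlled by \eqref{eq:DCT bound mar} (when $\ell=0$) or \eqref{eq:DCT bound trans} (when $\ell\ge 1$). This two-layer argument---weak convergence inside, DCT outside---is the missing ingredient in your sketch.
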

\begin{proof}
We have computed the limit of $Z_n$ in Lemma \ref{lem:Zn}. It remains to examine the limit of $\Phi_d\topp n(\vvx,\vvc)$. 

Denote  $\vvu=(u_1,\dots,u_d)$.
We write:
\begin{align*}
\Phi_{d}\topp n(\vvx,\vvc) &=
2^{-n}\int_\Rd\prodd k1d\pp{\cosh\pp{\frac{s_k}{\sqrt n}}+y_k}^{n_k-n_{k-1}} \pi\topp n_{e^{-2s_1/\sqrt n},\dots,e^{-2s_d/\sqrt n}}(\d y_1,\dots,\d y_d)
\\
& =  \int_{\R^d}  G_{\vvx,\vvc,n}(\vvu)\wt\pi\topp n_{-s_1,\dots,-s_d}(\d\vv u),
\end{align*}
where $n_k = \floor{n x_k}$ for $k=0,\dots,d$ and
\[
G_{\vvx,\vvc,n}(\vvu) = \prodd k1d\pp{1+\sinh^2\pp{\frac{s_k}{2\sqrt n}}-\frac{u_k}{4n}}^{n_k-n_{k-1}}.
\]  
 
We next write $\Phi_d\topp n$ as a sum of multiple integrals by decomposing the domain of the integral, $\Rd$:
\equh\label{eq:decomposition}
\Phi_d\topp n(\vvx,\vvc) =\summ \ell0d\Phi_{d,\ell}\topp n(\vvx,\vvc),
\eque
with
\begin{align*}
\Phi_{d,0}\topp n(\vvx,\vvc)& := \int_{\R^d}\prodd k1d G_{\vvx,\vvc,n}(\vvu)\inddd{u_1\geq0}\wt\pi\topp n_{-s_1,\dots,-s_d}(\d\vv u),\\
\Phi_{d,\ell}\topp n(\vvx,\vvc)& := \int_{\R^d}\prodd k1d G_{\vvx,\vvc,n}(\vvu)\inddd{u_1,\dots,u_\ell<0,u_{\ell+1}\geq0}\wt\pi\topp n_{-s_1,\dots,-s_d}(\d\vv u),\quad \ell=1,\dots,d-1,\\
\Phi_{d,d}\topp n(\vvx,\vvc)& := \int_{\R^d}\prodd k1d G_{\vvx,\vvc,n}(\vvu)\inddd{u_1,\dots,u_d<0}\wt\pi\topp n_{-s_1,\dots,-s_d}(\d\vv u).
\end{align*}
Recall from Lemma \ref{lem:marginal atom} that for each $k=1,\dots,d$, when $n$ gets large enough,  $\wt\pi_{-s_k}\topp n$ has a single atom at $\wt y\topp n_{-s_k}<0$. Moreover, by Lemma \ref{lem:transition}, for large enough $n$,  $\wt P_{-s_{k-1},-s_k}^{(n)}$ does not have atoms. 
Denote $\vvu_{\ell:d}=(u_{\ell},\dots,u_d)$ for $\ell=1,\dots,d$, where $\vvu_{1:d}=\vvu$.
We have the explicit expressions:
\begin{align}
      \label{eq:alt form d0}  \Phi_{d,0}\topp n(\vvx,\vvc)   &= \int_{\R_+^d}G_{\vvx,\vvc,n}(\vvu)\wt \pi_{-s_1}^{(n),\rmc}(u_1)\prodd k2d\wt P_{-s_{k-1},-s_k}^{(n),\rmc,\rmc}(u_{k-1},u_k)\d\vv u,\\
      \label{eq:alt form dl}\Phi_{d,\ell}\topp n(\vvx,\vvc) &=\wt \pi_{-s_1}^{(n),\rmd}\lb y_{-s_1}\topp n\rb\prodd j2\ell \wt P^{(n),\rmd,\rmd}_{-s_{j-1},s_j}\lb\wt y_{-s_{j-1}}\topp n,\wt y_{-s_j}\topp n\rb\int_{\R_+^{d-\ell}}G_{\vvx,\vvc,n}\pp{\wt y\topp n_{-s_1},\dots,\wt y\topp n_{-s_\ell},\vvu_{\ell+1:d}} \\
    \nonumber  &\quad\times\wt P_{-s_\ell,-s_{\ell+1}}^{(n),\rmd,\rmc}\lb\wt y\topp n_{-s_{\ell}},u_{\ell+1}\rb 
\prodd k{\ell+2} d \wt P_{-s_{k-1},-s_k}^{(n),\rmc,\rmc}\lb u_{k-1},u_k\rb\d \vvu_{\ell+1:d},\hspace{.1cm} \ell=1,\dots,d-1,
\end{align} 
and
\[
\Phi_{d,d}\topp n(\vvx,\vvc) = G_{\vvx,\vvc,n}\pp{\wt y\topp n_{-s_1},\dots,\wt y\topp n_{-s_d}} \wt \pi_{-s_1}^{(n),\rmd}\lb\wt y\topp n_{-s_1}\rb\prodd k2d \wt P_{-s_{k-1},-s_k}^{(n),\rmd,\rmd}\lb\wt y_{-s_{k-1}}\topp n,\wt y_{-s_k}\topp n\rb.
\]
A helpful way of thinking of decomposition \eqref{eq:decomposition} and the explicit forms above was provided in Remark \ref{rem:fan}.

We first analyze the limit of $\Phi_{d,d}\topp n(\vvx,\vvc)$ as $n\to\infty$. 
We have (see \cite[Proof of Lemma 4.7]{bryc23asymmetric}):
\[ 
\limn G_{\vvx,\vvc,n}(\vvu) = G_{\vvx,\vvc}(\vvu), \quad u\in[0,\infty)^d,
\] 
with
\[ 
 G_{\vvx,\vvc}(\vvu) := \exp\pp{\frac14\summ k1d (s_k^2-u_k)(x_k-x_{k-1})} = \PhiB(\vvx,\vv c)  \exp\pp{-\frac14\summ k1d u_k\Delta x_k },
\] 
and 
\[
\Delta x_k = x_k-x_{k-1},\quad k=1,\dots,d.
\]
We also note a slightly stronger convergence:
\begin{equation}\label{eq:lim of G strong}
\limn G_{\vvx,\vvc,n}(\vvu_n) = G_{\vvx,\vvc}(\vvu),\quad \text{for }\vvu_n\to\vvu\in[0,\infty)^d \text{ as }n\to\infty. 
\end{equation}
Moreover, we have \cite[equation (4.32)]{bryc23asymmetric}: for some $K>0$ independent of $n$,
\begin{equation}\label{eq:bound on Gn}
    G_{\vvx,\vvc,n}(\vvu) \le K \prodd k1d\exp\pp{-\frac{\ds n_k-n_{k-1}}{\ds 4n}u_k},\quad \vv u \in[0,4n]^d.
\end{equation}
In view of the limit of $\wt y\topp n_{t}$ and  $\wt \pi_{t}^{(n),\rmd}(\wt y\topp n_{t})$ in Lemma \ref{lem:marginal atom}, the limit  
 of $\wt P_{s,t}^{(n),\rmd,\rmd}(\wt y_{s}\topp n,\wt y_{t}\topp n)$ in Lemma \ref{lem:transition}, and the limit of  $G_{\vvx,\vvc,n}(\vvu)$ in \eqref{eq:lim of G strong}, we have: 
\[\limn\Phi_{d,d}\topp n(\vvx,\vvc)=G_{\vvx,\vvc}\pp{y^{*,\C}_{-s_1},\dots,y^{*,\C}_{-s_d}}\frac{2\C-2s_d}{\C-2s_d-\A}   = \PhiB(\vvx,\vvc)
\Phi_{d,d}
(\vvx,\vvc),\]
where we recall $y^{*,\C}_t=-(t+\C)^2$, and denote:
\equh\label{eq:wt dd}
\Phi_{d,d}(\vvx,\vvc) :=  \exp\pp{\frac14\summ k1d \Delta x_k(s_k-\C)^2}  \frac{2\C-2s_d}{\C-2s_d-\A}.
\eque

To analyze the limits of $\Phi_{d,0}\topp n(\vvx,\vvc)$ and $\Phi_{d,\ell}\topp n(\vvx,\vvc)$, $\ell=1,\dots,d-1$, we need the following fact.
\begin{lemma}[Lemma 4.8 in \citep{bryc23asymmetric}, see also Theorem 5.5 in \citep{billingsley99convergence}]
\label{lem:cite}
    Suppose a sequence of probability measures $\mu\topp n$ on  $\R_+^r$  converges weakly to a probability measure $\mu$. Let  $G_{n}$ be a   sequence of uniformly bounded measurable functions, $G_n:\R_+^r\to [-K,K]$ for some $K>0$,   such that for all $\vvu_n\to \vv u\in\R_+^r$ we have
\[\limn G_{n}(\vv u_n) = G(\vv u). 
\]
Then we have:
\[ 
\limn \int G_n(\vv u)\mu\topp  n(\d \vv u)= \int G(\vv u)\mu(\d \vv u).
\] 
\end{lemma}

We next analyze the limit as $n\to\infty$ of $\Phi_{d,0}\topp n(\vvx,\vvc)$ given by \eqref{eq:alt form d0}. We write:
\[ 
    \begin{split}
        \Phi_{d,0}\topp n(\vvx,\vvc) & =  \int_{\R_+}\wt \pi_{-s_1}^{(n),\rmc}(u_1)\d u_1\int_{\R_+^{d-1}}G_{\vvx,\vvc,n}(\vvu)\prodd k2d\wt P_{-s_{k-1},-s_k}^{(n),\rmc,\rmc}(u_{k-1},u_k)\d
        \vvu_{2:d} \\
        &= \int_{\R_+}\inddd{u_1\in[0,4n]}\wt \pi_{-s_1}^{(n),\rmc}(u_1)\mathsf{G}_{d,0}\topp n(u_1)\d u_1,
    \end{split}
\] 
where 
\begin{equation}\label{eq:def of Gn0}
    \mathsf{G}_{d,0}\topp n(u_1):=\int_{\R_+^{d-1}}G_{\vvx,\vvc,n}(\vvu)\mu_{d,0,u_1}\topp n\lb\d\vvu_{2:d}\rb,
\end{equation}
with
\[\mu_{d,0,u_1}\topp n\lb\d\vvu_{2:d}\rb:=\prodd k2d\wt P_{-s_{k-1},-s_k}^{(n),\rmc,\rmc}(u_{k-1},u_k)\d \vvu_{2:d},\quad u_1\in[0,4n]\]
is a probability measure on $\R_+^{d-1}$ for any fixed $u_1$. Using Scheffé's lemma and the limit of $\wt P_{s,t}^{(n),\rmc,\rmc}(u,v)$ from Lemma \ref{lem:transition}, as $n\to\infty$, probability measures  $\mu_{d,0,u_1}\topp n$ weakly converge to $\mu_{d,0,u_1}$ given by:
\[
\mu_{d,0,u_1}\lb\d\vvu_{2:d}\rb=\frac{\ds (\A+s_1)^2+u_1}{\ds (\A+s_d)^2+u_d}\prodd k2d \p_{c_{k-1}}(u_{k-1},u_k) \d\vvu_{2:d},\quad u_1\in[0,4n],
\]
where the function $\p_t(u,v)$ is defined in \eqref{eq:def of p}.
Using Lemma \ref{lem:cite} for $r=d-1$, in view of \eqref{eq:lim of G strong}, we have:
\[
    \limn\mathsf{G}_{d,0}\topp n(u_1)=\mathsf{G}_{d,0}(u_1):=\int_{\R_+^{d-1}}G_{\vvx,\vvc}(\vvu)\mu_{d,0,u_1} \lb\d\vvu_{2:d}\rb,\quad u_1\in[0,4n].
\]
In view of \eqref{eq:def of Gn0}, using the bound \eqref{eq:bound on Gn},  we have:
\begin{equation}
    \label{eq:bound Gd0}
 \mathsf{G}_{d,0}\topp n(u_1)\leq K\exp\pp{-\frac{n_1}{4n}u_1},\quad   u_1 \in[0,4n].\end{equation}
By the pointwise limit of $\wt\pi_t^{(n),\rmc}(u)$ in Lemma \ref{lem:marginal cont}, \eqref{eq:bound Gd0}, and the uniform bound of $\abs{\wt\pi_t^{(n),\rmc}(u)}$ in Lemma \ref{lem:marginal cont}, one can use the dominated convergence theorem to  show 
\begin{align*}
\Phi_{d,0}\topp n(\vvx,\vvc) &=\int_{\R_+}\inddd{u_1\in[0,4n]}\wt \pi_{-s_1}^{(n),\rmc}(u_1)\mathsf{G}_{d,0}\topp n(u_1)\d u_1\\
& \to \int_{\R_+}\frac{\A+\C}\pi \frac{\ds \sqrt{u_1}}{\ds ((\A+s_1)^2+u_1)((\C-s_1)^2+u_1)}\mathsf{G}_{d,0}(u_1)\d u_1=\PhiB(\vvx,\vvc)
\Phi_{d,0}
(\vvx,\vvc),
\end{align*} 
as $n\to\infty$,
where
\begin{multline}\label{eq:wt d0}
    \Phi_{d,0}(\vvx,\vvc) \\
    :=   \frac{\ds \A+\C}{\ds \pi}  \int_{\R_+^d}\d\vvu\exp\lb-\frac14\summ k1d u_k\Delta x_k \rb\frac{\ds \sqrt{u_1}}{\ds (\C-s_1)^2+u_1}\frac{\ds 1}{\ds (\A+s_d)^2+u_d}\prodd k2d \p_{c_{k-1}}(u_{k-1},u_k).
\end{multline}

It remains to analyze the limits as $n\to\infty$ of $\Phi_{d,\ell}\topp n(\vvx,\vvc)$, $\ell=1,\dots,d-1$ given by \eqref{eq:alt form dl}.
This procedure is similar to the previous case.
We write, first assuming in addition $\ell\le d-2$,
\begin{align}
        \Phi_{d,\ell}\topp n(\vvx,\vvc) &=\wt \pi_{-s_1}^{(n),\rmd}\lb y_{-s_1}\topp n\rb\prodd j2\ell \wt P^{(n),\rmd,\rmd}_{-s_{j-1},s_j}\lb\wt y_{-s_{j-1}}\topp n,\wt y_{-s_j}\topp n\rb\int_{\R_+} \wt P_{-s_\ell,-s_{\ell+1}}^{(n),\rmd,\rmc}\lb\wt y\topp n_{-s_{\ell}},u_{\ell+1}\rb\d u_{\ell+1}\nonumber\\ 
   &\quad\quad\times \int_{\R_+^{d-\ell-1}}  G_{\vvx,\vvc,n}\pp{\wt y\topp n_{-s_1},\dots,\wt y\topp n_{-s_\ell},\vvu_{\ell+1:d}}
\prodd k{\ell+2} d \wt P_{-s_{k-1},-s_k}^{(n),\rmc,\rmc}\lb u_{k-1},u_k\rb\d \vvu_{\ell+2:d} \nonumber\\
&=\wt \pi_{-s_1}^{(n),\rmd}\lb y_{-s_1}\topp n\rb\prodd j2\ell \wt P^{(n),\rmd,\rmd}_{-s_{j-1},s_j}\lb\wt y_{-s_{j-1}}\topp n,\wt y_{-s_j}\topp n\rb  \label{eq:analyze l}\\
 &\quad\quad\times \int_{\R_+}\inddd{u_{\ell+1}\in[0,4n]}\mathsf{G}_{d,\ell}\topp n(u_{\ell+1}) \wt P_{-s_\ell,-s_{\ell+1}}^{(n),\rmd,\rmc}\lb\wt y\topp n_{-s_{\ell}},u_{\ell+1}\rb\d u_{\ell+1},\nonumber
\end{align} 
where 
\begin{equation}\label{eq:def of Gnl}
    \mathsf{G}_{d,\ell}\topp n(u_{\ell+1}):=\int_{\R_+^{d-\ell-1}}G_{\vvx,\vvc,n}\pp{\wt y\topp n_{-s_1},\dots,\wt y\topp n_{-s_\ell},\vvu_{\ell+1:d}} \mu_{d,\ell,u_{\ell+1}}\topp n\lb\d \vvu_{\ell+2:d}\rb, 
\end{equation}
and
\[\mu_{d,\ell,u_{\ell+1}}\topp n\lb\d\vvu_{\ell+2:d}\rb=\prodd k{\ell+2} d \wt P_{-s_{k-1},-s_k}^{(n),\rmc,\rmc}\lb u_{k-1},u_k\rb\d \vvu_{\ell+2:d},\quad u_{\ell+1}\in[0,4n]\]
is a probability measure on $\R_+^{d-\ell-1}$ for all $u_{\ell+1}$. Using the Scheffé's lemma and the limit of $\wt P_{s,t}^{(n),\rmc,\rmc}(u,v)$ from Lemma \ref{lem:transition}, as $n\to\infty$, probability measures  $\mu_{d,\ell,u_{\ell+1}}\topp n$ weakly converge to $\mu_{d,\ell,u_{\ell+1}}$ given by:
\[\mu_{d,\ell,u_{\ell+1}}\lb\d \vvu_{\ell+2:d}\rb=\frac{\ds (\A+s_{\ell+1})^2+u_{\ell+1}}{\ds (\A+s_d)^2+u_d}\prodd k{\ell+2}d \p_{c_{k-1}}(u_{k-1},u_k) \d \vvu_{\ell+2:d},\quad u_1\in[0,4n].\]
Using Lemma \ref{lem:cite} for $r=d-\ell-1$, in view of \eqref{eq:lim of G strong}, we have:
\[
    \limn\mathsf{G}_{d,\ell}\topp n(u_{\ell+1})=\mathsf{G}_{d,\ell} (u_{\ell+1}),\quad u_{\ell+1}\in[0,4n],
\]
where 
\[
\mathsf{G}_{d,\ell} (u_{\ell+1}):=\int_{\R_+^{d-\ell-1}}G_{\vvx,\vvc}\pp{\wt y\topp n_{-s_1},\dots,\wt y\topp n_{-s_\ell},\vvu_{\ell+1:d}}\mu_{d,\ell,u_{\ell+1}} \lb\d \vvu_{\ell+2:d}\rb.
\]
In view of \eqref{eq:def of Gnl}, using the bound \eqref{eq:bound on Gn},  we have:
\begin{equation}
    \label{eq:bound Gdl}
 \mathsf{G}_{d,\ell}\topp n(u_{\ell+1})\leq K\exp\pp{-\frac{\ds n_{\ell+1}-n_{\ell}}{\ds 4n}u_{\ell+1}},\quad   u_{\ell+1} \in[0,4n].\end{equation}
By \eqref{eq:bound Gdl} and the uniform bound of $\sabs{\wt P_{s,t}^{(n),\rmd,\rmc}(\wt y\topp n_{s},v)}$ in Lemma \ref{lem:transition}, one can use the dominated convergence theorem to evaluate the limit of  integral in \eqref{eq:analyze l}. By the pointwise limits in Lemma \ref{lem:marginal atom} and Lemma \ref{lem:transition}:

\begin{align*} 
        \limn &\Phi_{d,\ell}\topp n(\vvx,\vvc)
     \\
       &=\limn\wt \pi_{-s_1}^{(n),\rmd}\lb y_{-s_1}\topp n\rb\prodd j2\ell\limn \wt P^{(n),\rmd,\rmd}_{-s_{j-1},s_j}\lb\wt y_{-s_{j-1}}\topp n,\wt y_{-s_j}\topp n\rb  \\
 &\quad\quad\times \limn\int_{\R_+}\inddd{u_{\ell+1}\in[0,4n]}\mathsf{G}_{d,\ell}\topp n(u_{\ell+1}) \wt P_{-s_\ell,-s_{\ell+1}}^{(n),\rmd,\rmc}\lb\wt y\topp n_{-s_{\ell}},u_{\ell+1}\rb\d u_{\ell+1}\\
 &= \frac{\A+\C}\pi\frac{\ds 2(\C-s_1)}{\ds -2s_1+\C-\A}\prodd j2\ell\frac{\ds (\C-s_j)(-2s_{j-1}+\C-\A )}{\ds (\C-s_{j-1})(-2s_j+\C-\A )}\\
 &\quad\quad\times  \int_{\R_+^{d-\ell}}G_{\vvx,\vvc}\pp{y^{*,\C}_{-s_{1}},\dots,y^{*,\C}_{-s_\ell},\vvu_{\ell+1:d}}
\frac{\ds (\A+s_{\ell+1})^2+u_{\ell+1}}{\ds (\A+s_d)^2+u_d}\prodd k{\ell+2}d \p_{c_{k-1}}(u_{k-1},u_k)\\
&\quad\quad\times\frac{\ds \sqrt{u_{\ell+1}}}{\ds ((\A+s_{\ell+1})^2+u_{\ell+1})((\C-s_{\ell+1})^2+u_{\ell+1})}  \frac{\ds (\A-\C+2s_{\ell})(2s_{\ell}-2s_{\ell+1})}{\ds (-2s_{\ell}+s_{\ell+1}+\C)^2+u_{\ell+1}}\d \vvu_{\ell+1} \\
&=\PhiB(\vvx,\vvc)\Phi_{d,\ell}(\vvx,\vvc), 
\end{align*}
where 
\begin{multline}\label{eq:wt dl}
    \Phi_{d,\ell}(\vvx,\vvc) :=      \frac{\A+\C}\pi\int_{\R_+^{d-\ell}}
    e^{-\frac14\summ k{\ell+1}d\Delta x_ku_k+\frac14\summ k1\ell\Delta x_k(s_k-\C)^2} 
   \prodd k{\ell+2}d \p_{c_{k-1}}(u_{k-1},u_k)\\
   \times  
 \sqrt{u_{\ell+1}}\frac1{(\A+s_d)^2+u_d}  
   \frac{\ds 4(\C-s_\ell)(s_{\ell+1}-s_\ell)}{\ds ((\C-s_{\ell+1})^2+u_{\ell+1})((\C-2s_\ell+s_{\ell+1})^2+u_{\ell+1})}\d\vvu_{\ell+1:d}.
\end{multline}
It remains to prove the case $\ell = d-1$. In this case we have in place of \eqref{eq:def of Gnl}
\[
\mathsf G_{d,d-1}\topp n:=G_{\vvx,\vvc,n}\pp{\wt y_{-s_1}\topp n,\dots,\wt y_{-s_{d-1}}\topp n,u_d}.
\]
The rest of the analysis is similar. We conclude the proof.
\end{proof}
\subsection{\texorpdfstring{Proof of the main theorem for $\A+\C<0$,  $\A\neq\C$ and $\A,\C\neq0$}{}} \label{subsec:proof a neq c} 
We recall that the tightness part of Theorem \ref{thm:1} was established right before Section \ref{sec:tangent}. It remains to be shown that, as $n\to\infty$,
\equh\label{eq:fdd again}
\frac1{\sqrt n}\lb h_n(x)\rb_{x\in[0,1]}\fddto \frac1{\sqrt 2}\lb\BB_x + \eta\topp{\A/\sqrt 2,\C/\sqrt 2}_x\rb_{x\in[0,1]}.
\eque
We have computed the limit Laplace transform of finite-dimensional distributions of the left-hand side above in Proposition \ref{prop:limit Laplace} as
\begin{equation}
    \label{eq:coafwe}
\limn\varphi_{n,\vvx}^{(A_n,B_n,C_n,D_n,q)}\pp{\frac\vvc{\sqrt n}}  = 
\PhiB(\vvx,\vvc) \frac{\summ \ell0d \Phi_{d,\ell}(\vv x,\vvc)}{\calH\lb\A/\sqrt 2,\C/\sqrt 2\rb}.
\end{equation}
The desired \eqref{eq:fdd again} will follow right away from the convergence of corresponding Laplace transforms: that is, \eqref{eq:coafwe} and the following proposition.
\begin{proposition}\label{prop:duality}
With $c_1,\dots,c_d>0, -c_d<\A<-\C$, $0=x_0<x_1<\cdots<x_d=1$, we have
\equh\label{eq:conjecture}
\frac1{\calH(\A/\sqrt 2,\C/\sqrt 2)}\summ \ell0d\Phi_{d,\ell}(\vvx,\vvc) =\mathbb{E}\lee\exp\pp{-\frac{1}{\sqrt{2}}\summ k1d c_k \eta\topp{\A/\sqrt 2,\C/\sqrt 2}_x}\ree.
\eque
\end{proposition}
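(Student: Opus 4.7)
The plan is a soft argument that leverages the fact that the open TASEP case of Theorem \ref{thm:1} has already been established by \citet{bryc24two} via a completely different route (a discrete two-line representation). The key point is that the identity \eqref{eq:conjecture} is purely analytic in $\A,\C,\vv{x},\vv{c}$ and does not involve the remaining open ASEP parameters $q,B,D$; hence it suffices to verify the formula by specializing our asymptotic analysis to any one family of open ASEP models realizing the prescribed $\A,\C$, and open TASEP is such a family.

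To implement this, I would first fix a sequence of open TASEP parameters (so $q=\gamma_n=\delta_n=0$, giving $B_n=D_n=0$) with $\alpha_n,\beta_n$ chosen so that the induced $A_n,C_n$ from \eqref{eq:defining ABCD} satisfy Assumption \ref{assump:0} with the prescribed $\A,\C$ subject temporarily to $\A\ne\C$ and $\C\ne 0$ (the remaining condition $\A\ne 0$ is automatic since $-c_d<\A<0$). For this TASEP sequence, \citep{bryc24two} establishes the convergence $h_n/\sqrt{n}\weakto \frac{1}{\sqrt{2}}(\BB+\eta\topp{\A/\sqrt{2},\C/\sqrt{2}})$ in $D([0,1])$. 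Combined with the convergence of finite-dimensional Laplace transforms already supplied by Proposition \ref{prop:limit Laplace} (applicable for any $q\in[0,1)$, including $q=0$), and with a standard uniform integrability argument (obtained by a small perturbation of $\vv{c}$ within the open range where Proposition \ref{prop:limit Laplace} applies, giving a uniform $L^{1+\eps}$ bound), one identifies the two limits:
\[
\PhiB(\vv{x},\vv{c})\,\frac{\summ \ell 0 d \Phi_{d,\ell}(\vv{x},\vv{c})}{\calH(\A/\sqrt{2},\C/\sqrt{2})}
= \PhiB(\vv{x},\vv{c})\, \esp\lee\exp\pp{-\frac{1}{\sqrt{2}}\summ k1d c_k\eta\topp{\A/\sqrt{2},\C/\sqrt{2}}_{x_k}}\ree.
\]
Cancelling the nonzero factor $\PhiB$ then yields \eqref{eq:conjecture} in the restricted range $\A\ne\C$, $\C\ne 0$.

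For the boundary cases $\A=\C$ or $\C=0$ still allowed by the hypothesis $-c_d<\A<-\C$, I would extend the identity by continuity in $(\A,\C)$. The right-hand side is continuous by Proposition \ref{prop:continuity of Laplace}. For the left-hand side, inspection of the explicit formulas \eqref{eq:calH}, \eqref{eq:wt dd}, \eqref{eq:wt d0}, and \eqref{eq:wt dl} shows that the apparent singularity at $\A=\C$ is removable and that the expression extends continuously to $\C=0$.

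The main obstacle, and the reason an indirect approach is needed, is that the multiple Askey--Wilson integrals defining $\summ \ell 0 d \Phi_{d,\ell}$ do not appear to admit a direct evaluation matching the right-hand side for $d\ge 2$; only the case $d=1$ yields to head-on computation, see Proposition \ref{prop:d=1} and Appendix \ref{sec:duality}. The logical cost of invoking \citep{bryc24two} is slight: any proof of the TASEP case by any method would serve equally well here.
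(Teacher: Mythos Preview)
Your proposal is correct and follows essentially the same approach as the paper: specialize to open TASEP, invoke \citet{bryc24two} for convergence in distribution, upgrade to convergence of Laplace transforms via uniform integrability (the paper does this by replacing $c_k$ with $pc_k$ for some $p>1$, which is exactly your ``small perturbation'' idea), compare with Proposition \ref{prop:limit Laplace}, and finish by continuity. One minor slip: your claim that ``$\A\ne 0$ is automatic since $-c_d<\A<0$'' is incorrect, as the hypothesis $-c_d<\A<-\C$ does not force $\A<0$ (e.g.\ $\A=1$, $\C=-3$); you should temporarily assume $\A\ne 0$ as well and absorb this case into the continuity argument, just as the paper does.
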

\begin{remark}\label{rem:duality}
A corresponding identity to \eqref{eq:conjecture} showed up in closely related investigations of open ASEP in \citep{bryc19limit,wang24askey,bryc23asymmetric,bryc23markov}, where each side can be interpreted as a Laplace transform of certain stochastic process (possibly with respect to an infinite measure). Here, the right-hand side is already a Laplace transform while whether the left-hand side has such an interpretation is much less clear. Nevertheless, it is clear that the right-hand side involves integrals with respect to transitional kernel $\q$ and $\q^{(b),*}$, while the left-hand side with respect to $\p$. The kernels $\p$ and $\q$ are in a dual relation in an appropriate sense as in all other examples: see \citep{kuznetsov24dual} for a general framework. We therefore refer to \eqref{eq:conjecture} as a duality formula. 

We actually managed to rewrite $\Phi_{d,\ell}$ as an integral involving the kernel $\q$ too, but we still could not prove \eqref{eq:conjecture} directly. See Appendix \ref{sec:duality} and Proposition \ref{prop:duality1} for more details. Instead, our proof of Proposition \ref{prop:duality} makes use of a recent key development by \citet{bryc24two}. 
\end{remark}

\begin{proof}[Proof of Proposition \ref{prop:duality}]
We only prove the case under the additional assumptions that $\A\ne \C$ and $\A,\C\ne 0$. These conditions can then be dropped by a continuity argument and we omit the details.

The idea is to first show that \eqref{eq:fdd again} and \eqref{eq:coafwe} imply \eqref{eq:conjecture}, {\em for a specific choice of parameters of $A_n, B_n, C_n, D_n,q$}. The key observation is that the identity \eqref{eq:conjecture} depends only on $\A,\C$ (see  \eqref{eq:wt dd}, \eqref{eq:wt d0}  and \eqref{eq:wt dl}), and that with $\A,\C$ fixed we still have some freedom on the choice of $A_n, B_n, C_n, D_n, q$ (in order to prove \eqref{eq:fdd again}).

Indeed, the convergence \eqref{eq:fdd again} has been shown in \citep[Theorem 3.1]{bryc24two} for open TASEP. Specifically, for any $\A,\C\in\R$, 
the convergence \eqref{eq:fdd again} holds with 
\[ 
    A_n=e^{-\A/\sqrt{n}},\quad B_n=0,\quad C_n=e^{-\C/\sqrt{n}},\quad D_n=0, \quad q=0.
    \] 
(In fact, they showed this convergence in the Skorokhod’s space $D([0,1])$.)
It is immediate that this choice of parameters satisfies Assumption \ref{assump:0}. 
Now, restricting further to  $\A+\C<0, \A\ne \C, \A,\C\ne 0$, 
and recalling our assumptions that $c_1,\dots,c_d>0$ and $-c_d<\A$,
we also have \eqref{eq:coafwe} following Proposition \ref{prop:limit Laplace}.

It remains to show that convergence in distribution here implies convergence of the corresponding Laplace transforms. For this purpose it remains to check uniform integrability. 
It follows from \citep[Theorem 3.1]{bryc24two}  that 
\begin{equation}\label{eq:seqce}
\exp\pp{-\frac{1}{\sqrt{n}}\summ k1d c_k h_n(x_k) } \weakto\exp\pp{-\frac{1}{\sqrt{2}}\summ k1d c_k \lb\BB_x + \eta\topp{\A/\sqrt 2,\C/\sqrt 2}_x\rb}.
\end{equation}
In view of \eqref{eq:coafwe} but changing $c_k$ to $pc_k$ for $k=1,\dots,d$ and for some $p>1$, 
 we justify the uniform integrability of sequence of left-hand side of \eqref{eq:seqce} indexed by $n\in\N$ 
 (e.g.~\cite[Theorem 4.6.2]{durrett10probability}). 
This change of variables is valid because $pc_1, \dots, pc_d > 0$ and $-pc_d < \A$, which follow from our assumptions that $c_1, \dots, c_d > 0$, $-c_d < \A$, and $p > 1.$ 
 Therefore,  using \cite[Theorem 3.5]{billingsley99convergence} we have:
\[
        \limn\varphi_{n,\vvx}^{(A_n,B_n,C_n,D_n,q)}\pp{\frac\vvc{\sqrt n}}  = 
        \mathbb{E}\lee\exp\pp{-\frac{1}{\sqrt{2}}\summ k1d c_k \lb\BB_x + \eta\topp{\A/\sqrt 2,\C/\sqrt 2}_x\rb}\ree.
\]
Comparing the above with \eqref{eq:coafwe}, we have proved the desired \eqref{eq:conjecture}.
\end{proof}

\subsection{Proof of the main theorem for the remaining cases} \label{subsec:proof for a=c}
In this subsection we will prove Theorem \ref{thm:1} for the remaining special cases: $\A+\C<0$, and either $\A=\C$ or 
of one $\A,\C$ equals zero,
 using 
 a
 continuity argument. 

The following result is known as the `stochastic sandwiching' of stationary measures of open ASEP.
\begin{lemma}\label{lem:sandwiching}
Consider the open ASEP on the lattice $\{1,\dots,n\}$ with parameters $(A',B',C',D',q)$ and $(A'',B'',C'',D'',q)$, where $q\in[0,1)$ and
\begin{equation}
    \label{eq:agewr}
0\leq A'\leq A'',\quad  B'=B''\in(-1,0],\quad   C'\geq C''\geq0,\quad  D'=D''\in(-1,0].
\end{equation}
The stationary measures are denoted respectively as $\mu'$ and $\mu''$, both with occupation variables $(\tau_{n,1},\dots,\tau_{n,n})$. 
Then for any $0<f_1,\dots,f_n\leq1$, we have 
$\mathbb{E}_{\mu'_n}[\prod_{i=1}^nf_i^{\tau_{n,i}}]\geq\mathbb{E}_{\mu''_n}[\prod_{i=1}^nf_i^{\tau_{n,i}}].$
\end{lemma}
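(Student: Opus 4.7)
The plan is to deduce the inequality from a classical basic coupling that realizes $\mu'_n$ and $\mu''_n$ on a common probability space in such a way that $\tau'_i \le \tau''_i$ for every $i$ almost surely.

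First I translate the hypothesis \eqref{eq:agewr} back into the original rates via the inverse of \eqref{eq:defining ABCD}. Holding $B, D \in (-1, 0]$ fixed and using Vieta's relations ($A+B = (1-q-\beta+\delta)/\beta$, $AB = -\delta/\beta$, and analogously for $C, D$), one finds
\begin{equation*}
\alpha = \frac{1-q}{(1+C)(1+D)}, \quad \gamma = \frac{-CD(1-q)}{(1+C)(1+D)}, \quad \beta = \frac{1-q}{(1+A)(1+B)}, \quad \delta = \frac{-AB(1-q)}{(1+A)(1+B)}.
\end{equation*}
Since $-B, -D \in [0, 1)$, each of these is monotone in the respective parameter $A$ or $C$, and the assumptions $A' \le A''$ and $C' \ge C''$ translate into $\alpha' \le \alpha''$, $\gamma' \ge \gamma''$, $\beta' \ge \beta''$, and $\delta' \le \delta''$. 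All four inequalities push the primed system to carry fewer particles than the double-primed one, so the pointwise order $\tau' \le \tau''$ is the right candidate to be preserved.

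Next I build the basic coupling on a common probability space. All bulk right-jump clocks (rate $1$) and left-jump clocks (rate $q$) are shared between the two systems, and each ring implements the corresponding move in each system whenever it is legal there (source occupied, target empty). At the left boundary, use a shared entry clock of rate $\alpha''$, always implemented in the double-primed system and implemented in the primed system only on an independent Bernoulli-$\alpha'/\alpha''$ thinning; symmetrically, a shared exit clock of rate $\gamma'$ is always implemented in the primed system and implemented in the double-primed system only on a Bernoulli-$\gamma''/\gamma'$ thinning. The right boundary is handled analogously using rates $\beta'$ for exit and $\delta''$ for entry. A routine case analysis over each ring type shows that the order $\tau'_i \le \tau''_i$ is preserved by every update: the boundary thinnings are designed precisely so that any move that would break the order is unavailable to the process that would violate it. Starting from $\tau'(0) = \tau''(0) = 0$ one therefore has $\tau'(t) \le \tau''(t)$ a.s.~for all $t \ge 0$, and by irreducibility and ergodicity of the finite-state continuous-time chains the time-$t$ law converges to a coupling of $\mu'_n$ and $\mu''_n$ under which the pointwise order holds almost surely.

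Finally, for $0 < f_i \le 1$ the map $\tau \mapsto \prod_i f_i^{\tau_i}$ is coordinatewise non-increasing, so under this coupling $\prod_i f_i^{\tau'_i} \ge \prod_i f_i^{\tau''_i}$ a.s., and taking expectations gives the lemma. The main (and only non-routine) step is the exhaustive verification that the basic coupling preserves monotonicity; this is a standard bookkeeping check in this setting, following exactly the template already used for the coupling-based tightness arguments, cf.~\citep[Proposition 2.3]{bryc23asymmetric}, \citep[Lemma 2.1]{gantert23mixing}, and \citep[Lemma 4.1]{corwin24stationary}.
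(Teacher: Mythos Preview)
Your proof is correct and follows essentially the same approach as the paper: translate the parameter inequalities back to the jump rates via the inverse of \eqref{eq:defining ABCD}, obtain $\alpha'\le\alpha''$, $\beta'\ge\beta''$, $\gamma'\ge\gamma''$, $\delta'\le\delta''$, and then invoke the monotone basic coupling (which the paper simply cites as \citep[Lemma 2.1]{gantert23mixing}) to conclude $\tau'\le\tau''$ almost surely. The only difference is that you spell out the coupling construction and the ergodicity step, whereas the paper absorbs both into the citation.
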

\begin{proof}
The open ASEP jump rates $\alpha,\beta,\gamma,\delta$ can be expressed in terms of parameters $A,B,C,D$ through the following inverse transformation of \eqref{eq:defining ABCD} (see \cite[equation (2.4)]{bryc17asymmetric}):
    \[\alpha=\frac{1-q}{(1+C)(1+D)},\quad \beta=\frac{1-q}{(1+A)(1+B)},\quad \gamma=\frac{-(1-q)CD}{(1+C)(1+D)},\quad \delta=\frac{-(1-q)AB}{(1+A)(1+B)}.\]
It then follows from \eqref{eq:agewr} that:
\[0<\alpha' \leq\alpha'',\quad\beta' \geq\beta''>0,\quad\gamma' \geq\gamma''\geq0,\quad0\leq\delta' \leq\delta''.\]
By the stochastic sandwiching result \cite[Lemma 2.1]{gantert23mixing}, there exists a coupling of $\mu'_n$ and $\mu_n''$ such that almost surely $\tau'_{n,i}\leq\tau_{n,i}''$ for $i=1,\dots,n$, where $\tau'_{n,i}$ and $\tau''_{n,i}$ respectively denote the occupation variables of $\mu_n'$ and $\mu_n''$. The inequality of the Laplace transform directly follows from this coupling. We conclude the proof. 
\end{proof}

\begin{proof}[Proof of Theorem \ref{thm:1}]
We have shown in Section \ref{subsec:proof a neq c} that for $\A\neq\C$ and $\A,\C\neq0$ 
the desired convergence. That is, under Assumption \ref{assump:0} and 
assuming $\vvc=(c_1,\dots,c_d)\in\R_+^d$ satisfies $-c_d<\A<-\C$,
the Laplace transform
\[ \varphi_{n,\vvx}^{(A_n,B_n,C_n,D_n,q)}\pp{\frac\vvc{\sqrt n}}  =  
\esp
\lee\exp\pp{-\frac{1}{\sqrt{n}}\summ k1d c_k h_n(x_k) }\ree\]
converges to the Laplace transform $\varphi_\vvx^{(\A,\C)}(\vvc)$ of the limit process (defined in \eqref{eq:Laplace right-hand side}).
We now prove 
 the remaining cases of $\A$ and $\C$ (i.e., $\A+\C<0$, and either $\A=\C$ or $\A\C=0$). We choose four sequences $(\A'_j), (\A''_j),(\C'_j),(\C''_j)$ in $\R$ such that 
\begin{equation}
    \label{eq:avre}\lim_{j\to\infty}\A'_j=\lim_{j\to\infty}\A''_j=\A,\quad \lim_{j\to\infty}\C'_j=\lim_{j\to\infty}\C''_j=\C,
    \end{equation}
and that for $j=1,2,\dots$,
\begin{equation} 
    \label{eq:ass'}\A_j'>\A>\A_j'',\hspace{.15cm}\C_j'<\C<\C_j'',\hspace{.15cm} \A_j'+\C_j'<0,\hspace{.15cm} \A_j''+\C_j''<0,\hspace{.15cm} \A_j'\neq\C_j',\hspace{.15cm} \A_j''\neq\C_j'',\hspace{.15cm} \A_j',\A_j'',\C_j',\C_j''\neq0.\end{equation}

Suppose that the sequence $A_n,B_n,C_n,D_n$ satisfy Assumption \ref{assump:0}, 
that is,
\[
1-A_n\sim\frac{\A}{\sqrt{n}},\quad B_n\sim B,\quad 1-C_n\sim\frac{\C}{\sqrt{n}},\quad D_n\sim D,
\]
where $B,D\in(-1,0]$. For each $j\in\N$, we choose sequences $(A_{n,j}'), (A_{n,j}''),(C_{n,j}'),(C_{n,j}'')$ satisfying:
\[
1-A_{n,j}'\sim\frac{\A_j'}{\sqrt{n}},\quad1-A_{n,j}''\sim\frac{\A_j''}{\sqrt{n}},\quad1-C_{n,j}'\sim\frac{\C_j'}{\sqrt{n}},\quad 1-C_{n,j}''\sim\frac{\C_j''}{\sqrt{n}}\quad \text{ as }n\to\infty.
\]
Since $\A_j'>\A>\A_j''$ and $\C_j'<\C<\C_j''$, one can assume that for each $n,j\geq1$,
\[
A_{n,j}'\leq A_n\leq A_{n,j}'',\quad C_{n,j}'\geq C_n\geq C_{n,j}''.
\]
We also let:
\[
B_{n,j}'=B_{n,j}'':=B_n,\quad D_{n,j}'=D_{n,j}'':=D_n.
\]
By the stochastic sandwiching argument stated as Lemma \ref{lem:sandwiching}, for any $\vvc\in\R_+^d$ and $n,j\geq1$, we have:
\begin{equation}\label{eq:compare}
    \varphi_{n,\vvx}^{(A'_{n,j},B'_{n,j},C'_{n,j},D'_{n,j},q)}\pp{\frac\vvc{\sqrt n}}\geq
    \varphi_{n,\vvx}^{(A_n,B_n,C_n,D_n,q)}\pp{\frac\vvc{\sqrt n}}\geq
    \varphi_{n,\vvx}^{(A''_{n,j},B''_{n,j},C''_{n,j},D''_{n,j},q)}\pp{\frac\vvc{\sqrt n}}.
\end{equation}
On the other hand, observe that for each $j=1,2,\dots$, the sequence $A'_{n,j}$, $B'_{n,j}$, $C'_{n,j}$, $D'_{n,j}$ (resp. $A''_{n,j}$, $B''_{n,j}$, $C''_{n,j}$, $D''_{n,j}$) satisfy Assumption \ref{assump:0} for $\A'_j$, $\C'_j$ (resp. $\A''_j$, $\C''_j$). In view of our assumption \eqref{eq:ass'}, the convergence of Laplace transforms has been proved for $(\A'_j, \C'_j)$ and $(\A''_j, \C''_j)$,  
that is, for each $j=1,2,\dots$ we have:
\begin{align*}
\limn\varphi_{n,\vvx}^{(A'_{n,j},B'_{n,j},C'_{n,j},D'_{n,j},q)}\pp{\frac\vvc{\sqrt n}}&=\varphi_\vvx^{(\A_j',\C_j')}(\vvc),\\
\limn\varphi_{n,\vvx}^{(A''_{n,j},B''_{n,j},C''_{n,j},D''_{n,j},q)}\pp{\frac\vvc{\sqrt n}}&=\varphi_\vvx^{(\A_j'',\C_j'')}(\vvc), 
\end{align*} 
for any $\vvc=(c_1,\dots,c_d)\in\R_+^d$ satisfying  $-c_d<\inf\{a_j',a_j'':j=1,2,\dots\}\neq-\infty$.
Therefore, in view of \eqref{eq:compare}, 
\begin{align*}
\varphi_\vvx^{(\A_j'',\C_j'')}(\vvc)
&\leq
\liminf_{n\to\infty}\varphi_{n,\vvx}^{(A_n,B_n,C_n,D_n,q)}\pp{\frac\vvc{\sqrt n}}
\leq
\limsup_{n\to\infty}\varphi_{n,\vvx}^{(A_n,B_n,C_n,D_n,q)}\pp{\frac\vvc{\sqrt n}}\\
&
\leq
\varphi_\vvx^{(\A_j',\C_j')}(\vvc).
\end{align*}
Taking $j\to\infty$ in the above inequality, in view of \eqref{eq:avre} and Proposition \ref{prop:continuity of Laplace}, we have:
\[\limn\varphi_{n,\vvx}^{(A_{n},B_{n},C_{n},D_{n},q)}\pp{\frac\vvc{\sqrt n}}=\varphi_\vvx^{(\A,\C)}(\vvc)\]
for any $\vvc=(c_1,\dots,c_d)\in\R_+^d$ satisfying  $-c_d<\inf\{a_j',a_j'':j=1,2,\dots\}\neq-\infty$. By \cite[Theorem A.1]{bryc19limit} we conclude the proof of Theorem \ref{thm:1} for parameters $\A$ and $\C$ such that $\A+\C<0$.
\end{proof}

\appendix
  \section{Explicit formula for the normalization constant}\label{app:normalization}
  We prove \eqref{eq:calH}, which for convenience we re-stated in the following lemma.
\begin{lemma}
    For any $\A,\C\in\R$, we have:
    \begin{equation}\label{eq:vrerr}
    \esp_{\rm Bm}\lee e^{(\A+\C)\min_{t\in[0,1]}\omega_t - \A\omega_1}\ree=\calH(\A,\C)= \begin{cases}
\displaystyle \frac{\A H(\A/\sqrt 2)-\C H(\C/\sqrt 2)}{\A-\C}, & \mbox{ if } \A\ne \C,\\\\
\displaystyle (1+\A^2)H\pp{\frac {\A}{\sqrt 2}}-\sqrt{\frac2\pi}\A, & \mbox{ if } \A=\C,
\end{cases}
\end{equation}
where for $x\in\R$, $H(x) =e^{x^2}\erfc(x)$, where $\erfc (x) = \frac 2{\sqrt\pi}\int_x^\infty e^{-t^2}\d t$.
\end{lemma}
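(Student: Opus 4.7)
The plan is to evaluate the expectation by integrating against the explicit joint density of $(m,\omega_1)$, where $m:=\min_{t\in[0,1]}\omega_t$, for a standard Brownian motion starting at $0$. Recall that by the reflection principle,
\[
\proba(m\in\d b,\omega_1\in\d x)=\frac{2(x-2b)}{\sqrt{2\pi}}\exp\pp{-\frac{(x-2b)^2}{2}}\inddd{b\le 0,\,b\le x}\d b\,\d x.
\]
Plugging this into the definition gives
\[
\calH(\A,\C)=\int_{-\infty}^0\int_b^\infty e^{(\A+\C)b-\A x}\,\frac{2(x-2b)}{\sqrt{2\pi}}e^{-(x-2b)^2/2}\,\d x\,\d b.
\]
First I would substitute $u=x-2b$ (so $x=u+2b$, $u\ge -b\ge 0$) and then $v=-b\ge 0$, which yields
\[
\calH(\A,\C)=\int_0^\infty e^{-(\C-\A)v}\int_v^\infty \frac{2u}{\sqrt{2\pi}}e^{-u^2/2-\A u}\,\d u\,\d v.
\]

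The key step is to swap the order of integration ($0\le v\le u$) and reduce everything to a one-dimensional integral. The inner $v$-integral is $(1-e^{-(\C-\A)u})/(\C-\A)$ when $\A\ne\C$, so
\[
\calH(\A,\C)=\frac{I(\A)-I(\C)}{\C-\A},\quad I(\alpha):=\int_0^\infty \frac{2u}{\sqrt{2\pi}}e^{-u^2/2-\alpha u}\,\d u.
\]
To evaluate $I(\alpha)$, I would complete the square via $w=u+\alpha$, split into $\int w\,e^{-w^2/2}$ (explicit) and $\alpha\int e^{-w^2/2}$ (an erfc). A direct computation gives
\[
I(\alpha)=\sqrt{\tfrac{2}{\pi}}-\alpha\,e^{\alpha^2/2}\erfc\pp{\alpha/\sqrt 2}=\sqrt{\tfrac{2}{\pi}}-\alpha H(\alpha/\sqrt 2),
\]
using the identity $e^{\alpha^2/2}\erfc(\alpha/\sqrt 2)=H(\alpha/\sqrt 2)$. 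Substituting back, the constants $\sqrt{2/\pi}$ cancel and we recover the first branch of \eqref{eq:vrerr}.

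For the diagonal case $\A=\C$, rather than redoing the integral I would pass to the limit from the case $\A\ne\C$ by L'Hôpital's rule: it reduces to computing $\frac{\d}{\d x}[x H(x/\sqrt 2)]$ at $x=\A$. Using the elementary identity $H'(y)=2yH(y)-2/\sqrt\pi$, which follows by differentiating $H(y)=e^{y^2}\erfc(y)$, a short computation gives $(1+\A^2)H(\A/\sqrt 2)-\A\sqrt{2/\pi}$, matching the second branch of \eqref{eq:vrerr}. The whole argument is computational; the only mild subtlety is bookkeeping the change of variables and the relation between $\erfc$ and $H$, but no obstacle of substance arises.
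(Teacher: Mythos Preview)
Your proof is correct and follows essentially the same route as the paper: both start from the reflection-principle joint density of $(\min_{t\in[0,1]}\omega_t,\omega_1)$, perform the same change of variables and Fubini swap to reduce to the one-dimensional integral $I(\alpha)=\int_0^\infty \sqrt{2/\pi}\,u\,e^{-u^2/2-\alpha u}\d u$, and identify $I(\alpha)=\sqrt{2/\pi}-\alpha H(\alpha/\sqrt 2)$. The only cosmetic differences are that the paper quotes this last identity from a table (Prudnikov) while you derive it by completing the square, and for the diagonal case $\A=\C$ the paper invokes another tabulated identity whereas your L'H\^opital argument is more direct.
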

\begin{proof}
    By the reflection principle, we have:
\begin{equation*} 
    \begin{split}
        \proba\pp{\min_{s\in[0,1]}\mathbb B_s\in \d b, \mathbb B_1\in \d y\mmid \mathbb B_0 = 0}  
& = \proba\pp{\max_{s\in[0,1]}\mathbb B_s\in \d (-b), \mathbb B_1\in \d (-y)\mmid \mathbb B_0 = 0} \\
& = \sqrt{\frac{2}{\pi}}(y-2b)e^{-(y-2b)^2/2}  \inddd{b<0,b<y}.  
    \end{split}
\end{equation*} 
Formula \eqref{eq:vrerr} then follows from the following result. 
\end{proof}

\begin{lemma}\label{lem:berar}
For all $\alpha,\beta\in\R$, 
\begin{multline*} 
    \sqrt{\frac{2}{\pi}} \int_{-\infty}^0\int_b^\infty e^{(\alpha+\beta)b-\beta x}   (x-2b)e^{-(x-2b)^2/2} \d x\d b 
\\     =  
    \begin{cases}
\ds \frac1{\alpha-\beta}  \pp{\alpha H\pp{\frac\alpha{\sqrt 2}} - \beta H\pp{\frac\beta{\sqrt 2}}}, & \mbox{ if } \alpha\ne\beta, \\
\ds (1+\alpha^2)H\pp{\frac\alpha{\sqrt 2}} - \sqrt{\frac 2\pi}\alpha, & \mbox{ if } \alpha=\beta.
\end{cases}
\end{multline*} 
\end{lemma}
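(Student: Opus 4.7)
The plan is to evaluate the double integral by reducing the inner $x$-integration to a closed form involving $\erfc$, then handling the resulting $b$-integration via integration by parts. First I would substitute $u = x - 2b$ inside the $x$-integral; this turns the factor $(x - 2b)$ into $u$ and the domain $\{x > b\}$ into $\{u > -b\}$, leaving the integrand $u e^{-\beta u - u^2/2}$ together with $e^{(\alpha-\beta)b}$ in the outer measure. Completing the square via $v = u + \beta$ and splitting $(v - \beta)\,e^{-v^2/2}$ into its two pieces, the inner integral evaluates explicitly to
$$\int_{-b}^\infty u e^{-\beta u - u^2/2}\,du = e^{\beta^2/2}\Bigl[e^{-(\beta-b)^2/2} - \beta\sqrt{\pi/2}\,\erfc\bigl((\beta - b)/\sqrt 2\bigr)\Bigr],$$
using $\int_a^\infty v e^{-v^2/2}\,dv = e^{-a^2/2}$ and the definition of $\erfc$.

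Substituting this back, I would split the remaining integral as $I = I_1 + I_2$. For the Gaussian piece $I_1$, completing the square in $b$ via $(\alpha-\beta)b - (\beta-b)^2/2 = (\alpha^2 - \beta^2)/2 - (b-\alpha)^2/2$ makes the exponentials $e^{\pm\beta^2/2}$ cancel, and the remaining integral $\int_{-\infty}^0 e^{-(b-\alpha)^2/2}\,db$ equals $\sqrt{\pi/2}\,\erfc(\alpha/\sqrt 2)$, yielding $I_1 = H(\alpha/\sqrt 2)$ exactly. For $I_2$, assuming $\alpha\neq\beta$, I would integrate by parts in $b$ using the key identity $\tfrac{d}{db}\erfc((\beta-b)/\sqrt 2) = \sqrt{2/\pi}\,e^{-(\beta-b)^2/2}$ with antiderivative $(\alpha-\beta)^{-1} e^{(\alpha-\beta)b}$. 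The boundary term at $b=0$ produces (up to prefactors) a multiple of $H(\beta/\sqrt 2)/(\alpha-\beta)$, the boundary term at $b = -\infty$ vanishes by the superexponential decay of $\erfc$, and the resulting Gaussian is precisely the one already handled in $I_1$.

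Combining all pieces gives
$$I = H(\alpha/\sqrt 2) + \frac{\beta}{\alpha-\beta}\bigl(H(\alpha/\sqrt 2) - H(\beta/\sqrt 2)\bigr) = \frac{\alpha H(\alpha/\sqrt 2) - \beta H(\beta/\sqrt 2)}{\alpha - \beta},$$
settling the case $\alpha\neq\beta$. For $\alpha = \beta$, both sides of the target identity are continuous in $\beta$, so I would take the limit $\beta\to\alpha$ by L'H\^opital's rule on the divided difference; using $H'(x) = 2xH(x) - 2/\sqrt\pi$ one obtains $\tfrac{d}{d\alpha}\bigl(\alpha H(\alpha/\sqrt 2)\bigr) = (1+\alpha^2)H(\alpha/\sqrt 2) - \sqrt{2/\pi}\,\alpha$, matching the stated formula. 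I do not anticipate any deep obstacle: the computation is a chain of standard Gaussian manipulations, and the only technical point worth care is verifying that the boundary term at $b = -\infty$ vanishes during the integration-by-parts step, which is controlled by the classical asymptotic $\erfc(x) \sim e^{-x^2}/(\sqrt\pi\,x)$ as $x\to\infty$.
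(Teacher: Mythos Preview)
Your argument is correct and complete; the Gaussian manipulations all check out, and the boundary term at $b=-\infty$ indeed vanishes by the asymptotic you cite. The continuity/L'H\^opital argument for $\alpha=\beta$ is also sound, since the original double integral is manifestly continuous in $(\alpha,\beta)$ by dominated convergence.

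However, your route differs from the paper's. The paper never evaluates the inner $x$-integral in closed form; instead it starts from the single-integral identity $pH(p/\sqrt 2)=\sqrt{2/\pi}\int_0^\infty(1-e^{-px})xe^{-x^2/2}\,dx$ (quoted from a table), forms the divided difference $\frac{e^{-\beta x}-e^{-\alpha x}}{\alpha-\beta}=e^{-\beta x}\int_0^x e^{-(\alpha-\beta)r}\,dr$, and then uses Fubini plus two changes of variables to morph the resulting double integral into the target one. For $\alpha=\beta$ the paper invokes a second tabulated identity for $\int_0^\infty e^{-\alpha x}x^2e^{-x^2/2}\,dx$ rather than taking a limit. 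Your approach is more self-contained (no external integral tables) and proceeds by direct evaluation plus integration by parts, at the cost of slightly more bookkeeping; the paper's approach is shorter once the Prudnikov identities are accepted, and the divided-difference trick avoids the integration-by-parts step entirely.
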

\begin{proof}
Recall first \citep[2.2.1.8]{prudnikov92integrals}, which states that for any $p\in\R$,
\begin{equation}\label{eq:2.2.1.8}
    p H\pp{\frac p{\sqrt 2}}
 =  p\sqrt{\frac 2\pi}\int_0^\infty e^{-x^2/2-px}\d x =\sqrt{\frac 2{\pi }}\int_0^\infty (1-e^{-px})xe^{-x^2/2}\d x.
\end{equation}  
Notice also that, for any $\alpha,\beta\in\R$,
\begin{equation}\label{eq:bvea}
    \begin{split}
        \int_0^\infty e^{-\beta x}\int_0^x e^{-(\alpha-\beta)r}  xe^{-x^2/2}\d r\d x  
& = \int_0^\infty e^{-(\alpha-\beta)r}\int_r^\infty e^{-\beta x}xe^{-x^2/2}\d x \d r \\
& = \int_{-\infty}^0 e^{(\alpha-\beta)b} \int_{-b}^\infty e^{-\beta x}xe^{-x^2/2}\d x\d b \\
& = \int_{-\infty}^0 e^{(\alpha+\beta)b} \int_{b}^\infty e^{-\beta x} (x-2b)e^{-(x-2b)^2/2}   \d x\d b.
    \end{split}
\end{equation} 
When $\alpha\neq\beta$, combining \eqref{eq:2.2.1.8} and \eqref{eq:bvea}, we have:
\begin{align*}
    \frac1{\alpha-\beta}  \pp{\alpha H\pp{\frac\alpha{\sqrt 2}} - \beta H\pp{\frac\beta{\sqrt 2}}}
& = \sqrt{\frac 2{\pi }}\int_0^\infty\frac{e^{-\beta x}-e^{-\alpha x}}{\alpha-\beta} xe^{-x^2/2}\d x \\
&= \sqrt{\frac 2{\pi }}\int_0^\infty e^{-\beta x}\int_0^x e^{-(\alpha-\beta)r}  xe^{-x^2/2}\d r\d x  \\
& = \sqrt{\frac{2}{\pi}} \int_{-\infty}^0\int_b^\infty e^{(\alpha+\beta)b-\beta x} (x-2b)e^{-(x-2b)^2/2}   \d x\d b.
\end{align*} 
When $\alpha= \beta$, by the identity \citep[(2.2.1.7)]{prudnikov92integrals}, we have:
\begin{multline*}
    (1+\alpha^2)H\pp{\frac\alpha{\sqrt 2}} - \sqrt{\frac 2\pi}\alpha=\sqrt{\frac 2\pi} \int_0^\infty e^{-\alpha x}x^2 e^{-x^2/2}\d x\\
    =\sqrt{\frac 2\pi}\int_0^\infty e^{-\alpha x}\int_0^x  xe^{-x^2/2}\d r\d x= \sqrt{\frac{2}{\pi}} \int_{-\infty}^0\int_b^\infty e^{2\alpha b-\beta x} (x-2b)e^{-(x-2b)^2/2}   \d x\d b.
\end{multline*}
where the last step uses \eqref{eq:bvea}. 
\end{proof}

\section{Duality formula of Laplace transforms of Markov processes}\label{sec:duality}

As mentioned in Remark \ref{rem:duality}, Proposition \ref{prop:duality} can be viewed as a duality formula for the Laplace transforms of Markov processes; see \citep{kuznetsov24dual} for a general framework. Unlike in several other cases involving limits of the open ASEP \citep{bryc19limit,wang24askey,bryc23asymmetric,bryc23markov} that we have encountered in the past, it is unexpected that we are unable to prove the duality formula by a direct calculation, except for the special case $d=1$.   
Here, we provide a direct and quick proof of the case $d=1$, and explain why even the case $d=2$ is a non-trivial task to prove directly, despite its simple formulation.

We first restate Proposition \ref{prop:duality} in the case $d=1$, and provide a proof by direct calculation. In the case $d=1$, we have $x_1=1$ and $s:=c_1>0$ with $-s<\A<-\C$.  Recall for convenience
\begin{align}
    \label{eq:psi11}
 \Phi_{1,1}(1,s) &  =  \frac{\ds 2\C-2s}{\ds \C-2s-\A} e^{(s-\C)^2/4},\\
    \label{eq:psi10}
\Phi_{1,0}(1,s) &  =  \frac{\ds \A+\C}{\ds \pi}\int_{\R_+} \frac{\ds \sqrt u}{\ds (\C-s)^2+u)((\A+s)^2+u)}e^{-u/4}\d u,
\end{align}
and that from Proposition \ref{prop:BLD} the random variable $\eta\topp{\A/\sqrt2,\C/\sqrt 2}_1$ has probability density function:
\begin{equation}
    \label{eq:d=1 density}
 p\topp{\A/\sqrt2,\C/\sqrt 2}_1(z)=\frac{ 1}{ \calH(\A/\sqrt2,\C/\sqrt 2)}\int_{-\infty}^0e^{(\A+\C)b/\sqrt{2}-\A z/\sqrt{2}}\q_1^{(b),*}(0,z)\d b,\end{equation}
where
\begin{equation}\label{eq:qd1}
\q_1^{(b),*}(0,z)=\sqrt{\frac 2{\pi }} (z-2b)e^{-(z-2b)^2/2}\inddd{b<0,b<z}.\end{equation}
We have the following.  
\begin{proposition}\label{prop:d=1}
Assume $\A+\C<0$ and $-s<\A<-\C$, we have:
\begin{equation}
    \label{eq:eebrvatbe}
\frac{\Phi_{1,0}(x,s)+\Phi_{1,1}(x,s)}{\calH\pp{\A/\sqrt{2},\C/\sqrt{2}}} = \frac{\calH\pp{(\A+s)/\sqrt{2}, (\C-s)/\sqrt{2}}}{\calH\pp{\A/\sqrt{2},\C/\sqrt{2}}} = \esp\lee e^{-s\eta\topp{\A/\sqrt2,\C/\sqrt 2}_1}\ree.
\end{equation}
\end{proposition}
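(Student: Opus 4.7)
The plan is to prove the two equalities separately, both by direct computation.

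For the right-hand equality, I would simply unfold the Radon--Nikodym representation \eqref{eq:def of eta again} of the law of $\eta\topp{\A/\sqrt 2,\C/\sqrt 2}$:
\begin{equation*}
\esp\lee e^{-s\eta\topp{\A/\sqrt 2,\C/\sqrt 2}_1}\ree = \frac{\esp_{\rm Bm}\lee e^{((\A+\C)/\sqrt 2)\min_{t\in[0,1]}\omega_t - (\A/\sqrt 2 + s)\omega_1}\ree}{\calH(\A/\sqrt 2,\C/\sqrt 2)},
\end{equation*}
and recognize the numerator as $\calH(\A/\sqrt 2 + s,\C/\sqrt 2 - s)$ by using the identity $A+C = \A/\sqrt 2+\C/\sqrt 2$ and $A = \A/\sqrt 2 + s$ (so $C = \C/\sqrt 2 - s$) in the definition of $\calH$. (Remark: with the $1/\sqrt 2$ scaling appearing in Proposition~\ref{prop:duality}, the relevant Laplace variable is $s/\sqrt 2$, yielding $\calH((\A+s)/\sqrt 2,(\C-s)/\sqrt 2)$.)

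For the left-hand equality, I would apply partial fractions to the integrand of \eqref{eq:psi10}: using $(\C-s)^2-(\A+s)^2=(\A+\C)(\C-\A-2s)$,
\begin{equation*}
\frac{1}{((\C-s)^2+u)((\A+s)^2+u)}=\frac{1}{(\A+\C)(\C-\A-2s)}\left(\frac{1}{(\A+s)^2+u}-\frac{1}{(\C-s)^2+u}\right).
\end{equation*}
Then invoke the identity $\int_0^\infty \sqrt u\,(\alpha^2+u)^{-1}e^{-u/4}\d u = \pi(2/\sqrt\pi - |\alpha|H(|\alpha|/2))$ (equation (4.38) of \citep{bryc23asymmetric}) term-by-term. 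Since $\A+s>0$ and $\C-s<0$ under the hypothesis $-s<\A<-\C$, we get $|\A+s|=\A+s$ and $|\C-s|=s-\C$, so after cancellation
\begin{equation*}
\Phi_{1,0}(1,s) = \frac{(\A+s)H((\A+s)/2) - (s-\C)H((s-\C)/2)}{\A-\C+2s}.
\end{equation*}

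The final step is to match this with the explicit form of $\calH((\A+s)/\sqrt 2,(\C-s)/\sqrt 2) = [(\A+s)H((\A+s)/2) - (\C-s)H((\C-s)/2)]/(\A-\C+2s)$. The discrepancy
\begin{equation*}
\calH((\A+s)/\sqrt 2,(\C-s)/\sqrt 2) - \Phi_{1,0}(1,s) = \frac{(s-\C)\bigl[H((s-\C)/2)+H((\C-s)/2)\bigr]}{\A-\C+2s}
\end{equation*}
is resolved using the reflection identity $H(x)+H(-x)=2e^{x^2}$ (immediate from $\erfc(x)+\erfc(-x)=2$), giving the discrepancy as $2(s-\C)e^{(s-\C)^2/4}/(\A-\C+2s)$, which coincides with $\Phi_{1,1}(1,s)$ in \eqref{eq:psi11} after rewriting $2\C-2s = -2(s-\C)$ and $\C-2s-\A = -(\A-\C+2s)$. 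This closes the computation. The whole argument is a routine symbolic manipulation; no step is genuinely difficult once one notices the crucial $H(x)+H(-x)=2e^{x^2}$ identity — it is the mechanism by which the continuous integral $\Phi_{1,0}$ and the atomic contribution $\Phi_{1,1}$ combine into a single $\calH$ expression. The difficulty alluded to in Remark~\ref{rem:duality} concerns higher $d$, where one cannot analogously reduce $\Phi_{d,\ell}$ (with $\ell\geq 1$) to closed-form $\calH$-like objects by elementary manipulations.
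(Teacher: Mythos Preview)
Your proof is correct and follows essentially the same route as the paper's: evaluate $\Phi_{1,0}$ via the integral identity from \citep[(4.38)]{bryc23asymmetric} (you do partial fractions first and apply the single-factor version twice, the paper applies the two-factor version directly --- these are equivalent), then use the reflection identity $H(x)+H(-x)=2e^{x^2}$ to reconcile with $\Phi_{1,1}$; for the right-hand equality you go directly through the Radon--Nikodym definition whereas the paper routes through Lemma~\ref{lem:berar} and the density~\eqref{eq:d=1 density}, which amounts to the same computation. Your parenthetical remark is well-observed: the statement as written carries a typo in the Laplace variable (it should be $e^{-(s/\sqrt 2)\eta_1}$ to agree with Proposition~\ref{prop:duality}), and both your argument and the paper's proof actually establish the corrected version.
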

\begin{proof}
Assume additionally $\A\neq\C$ and $\A,\C\neq0$. These constraints can be dropped by a continuity argument, and we omit the details.

Using the identity $H(x)+H(-x) = 2e^{x^2}$, by \eqref{eq:psi11} we have: 
\begin{align*}
\Phi_{1,1}(1,s)   =   \frac{2\C-2s}{\C-2s-\A}\pp{H\pp{\frac{s-\C}2} + H\pp{-\frac{s-\C}2}},
\end{align*}
Using the identity \citep[the equation after (4.38)]{bryc23asymmetric}:
\[
\int_0^\infty \frac{\ds \sqrt u}{\ds (a^2+u)(c^2+u)}e^{-u/4}\d u = \pi\frac{\ds cH(c/2)-aH(a/2)}{\ds c^2-a^2},\quad  \mfa a,c>0,
\]
by \eqref{eq:psi10} we have: 
\begin{equation*}
\Phi_{1,0}(1,s)
 =  \frac{\ds (s-\C)H((s-\C)/2)-(\A+s)H((\A+s)/2)}{\ds \C-\A-2s},
\end{equation*}
Combining the above identities, the first equality of \eqref{eq:eebrvatbe} follows. Set $\alpha = (\C-s)/\sqrt 2$ and $\beta = (\A+s)/\sqrt 2$ in Lemma \ref{lem:berar}, in view of \eqref{eq:d=1 density} and \eqref{eq:qd1}, the second equality follows. We conclude the proof.
\end{proof}

We next  provide a variation of Proposition \ref{prop:duality}. Recall expressions of $\Phi_{d,\ell}$ in \eqref{eq:wt dd}, \eqref{eq:wt d0}, and \eqref{eq:wt dl}, which are integrals involving transition kernel $\p$. We also introduce
\begin{multline*}
\Psi_{d,\ell}(\vvx,\vvc) 
:= 
\int_{-\infty}^0 e^{(\A+\C)b/\sqrt 2}\int_{(b,\infty)^d} e^{-\summ k1dc_kz_k/\sqrt 2-\A z_2/\sqrt 2}\\
\times \q_{\Delta x_\ell}^{(b),*}(z_{\ell-1},z_\ell)\prod_{\substack{k=1,\dots,d\\k\ne\ell}}\q_{\Delta x_k}\topp b(z_{k-1},z_k)\d \vvz_{1:d}\d b,
\end{multline*}
with $\ell = 1,\dots,d$. These are integrals involving transition kernels $\q$ and $\q^{(b),*}$. Note that by \eqref{eq:BLD} and \eqref{eq:Shepp1}, we have
\[ 
\frac{\Psi_{d,\ell}(\vvx,\vvc)}{\calH(\A/\sqrt 2,\C/\sqrt 2)} = \esp\pp{\exp\pp{-\frac1{\sqrt{2}}\summ k1d c_k\eta\topp{\A/\sqrt 2,\C/\sqrt 2}_{t_k}}\inddd{\argmin_{t\in[0,1]}\eta\topp{\A/\sqrt 2,\C/\sqrt 2}_t\in (t_{\ell-1},t_\ell]}}.
\] 
Now, we can rewrite Proposition \ref{prop:duality} as the following.
\begin{proposition}\label{prop:duality1}With $c_1,\dots,c_d>0, -c_d<\A<-\C$, $0=x_0<x_1<\cdots<x_d=1$, 
\equh\label{eq:conjecture1}
\summ \ell0d\Phi_{d,\ell}(\vvx,\vvc) = \summ \ell1d\Psi_{d,\ell}(\vvx,\vvc).
\eque
In the above, we have $\Phi_{d,\ell}$ given as in \eqref{eq:wt dd}, \eqref{eq:wt d0}, and \eqref{eq:wt dl}, and the last two expressions can be replaced by  \eqref{eq:Phi d0} and \eqref{eq:Phi dl} respectively below.
\end{proposition}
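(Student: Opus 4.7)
My plan is to prove \eqref{eq:conjecture1} as a direct consequence of Proposition \ref{prop:duality} by identifying both sides with $\calH\pp{\A/\sqrt 2,\C/\sqrt 2}$ times the Laplace transform
$$L(\vvx,\vvc):=\esp\lee\exp\pp{-\frac{1}{\sqrt{2}}\summ k1d c_k\,\eta\topp{\A/\sqrt 2,\C/\sqrt 2}_{x_k}}\ree.$$
Proposition \ref{prop:duality} already supplies $\summ \ell0d\Phi_{d,\ell}(\vvx,\vvc)=\calH\pp{\A/\sqrt 2,\C/\sqrt 2}\,L(\vvx,\vvc)$, so it suffices to establish the analogous identity $\summ \ell1d\Psi_{d,\ell}(\vvx,\vvc)=\calH\pp{\A/\sqrt 2,\C/\sqrt 2}\,L(\vvx,\vvc)$.

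\textbf{Matching the right-hand side.} Starting from the definition of $\Psi_{d,\ell}$, I apply Fubini to pull the sum inside the integrals, obtaining
\begin{multline*}
\summ \ell1d\Psi_{d,\ell}(\vvx,\vvc)=\int_{-\infty}^{0}\int_{(b,\infty)^d}e^{(\A+\C)b/\sqrt 2-\A z_d/\sqrt 2-\summ k1d c_k z_k/\sqrt 2}\\
\times\bb{\summ \ell1d\q_{\Delta x_\ell}^{(b),*}(z_{\ell-1},z_\ell)\prod_{k\ne \ell}\q_{\Delta x_k}\topp b(z_{k-1},z_k)}\d\vvz\,\d b.
\end{multline*}
Shepp's formula \eqref{eq:Shepp1}, summed over the interval $(x_{j-1},x_j]$ on which the global minimum is attained, shows that the bracketed expression equals $(-\partial_b)\prod_{k=1}^{d}\q_{\Delta x_k}\topp b(z_{k-1},z_k)$ (this identity is exactly the step just before \eqref{eq:BLD} in the proof of Proposition \ref{prop:BLD}). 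Since $\q_t\topp b(x,y)$ vanishes whenever $x\le b$ or $y\le b$, with the convention $z_0=0$ the constraint $b<\min_{k=0,\dots,d}z_k$ is automatically enforced by the supports of the kernels. Plugging this in and comparing with the density formula \eqref{eq:BLD} of Proposition \ref{prop:BLD}, applied with $(\A,\C)$ replaced by $(\A/\sqrt 2,\C/\sqrt 2)$, the right-hand side collapses to $\calH\pp{\A/\sqrt 2,\C/\sqrt 2}\,L(\vvx,\vvc)$. Combined with Proposition \ref{prop:duality}, this yields \eqref{eq:conjecture1}.

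\textbf{Alternative formulas for $\Phi_{d,0}$ and $\Phi_{d,\ell}$.} For the additional claim that \eqref{eq:wt d0} and \eqref{eq:wt dl} may be rewritten as \eqref{eq:Phi d0} and \eqref{eq:Phi dl}, the plan is to convert each $\p$-kernel integral into an integral against $\q$-kernels through a Laplace-type identity for the transition density of the squared radial part of the three-dimensional Cauchy process (see the remark following \eqref{eq:def of p}): namely, $\p_t(u,v)$ admits an integral representation as a mixture of absorbed-Brownian densities $\q\topp b$. After substituting such a representation into \eqref{eq:wt d0} and \eqref{eq:wt dl} and performing the change of variables $u_k=(\text{appropriate linear combination of boundary-shifted coordinates})^2$, the rational and exponential factors should regroup into the required form.

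\textbf{Main obstacle.} The substantive difficulty is not in this proposition but in Proposition \ref{prop:duality} it invokes. As the authors emphasize, a direct analytic proof of the duality formula escapes us for $d\ge 2$, forcing the soft argument based on the two-line representation of open TASEP from \citep{bryc24two}. If one wished instead to prove \eqref{eq:conjecture1} analytically and then deduce Proposition \ref{prop:duality} from it, one would face the task of matching two very differently structured multiple integrals over the $\p$- and $\q$-kernels, which is exactly the obstruction alluded to in Remark \ref{rem:duality}.
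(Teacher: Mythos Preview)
Your proof of the identity \eqref{eq:conjecture1} is correct and is essentially the paper's own argument: the paper simply notes (just before stating Proposition \ref{prop:duality1}) that \eqref{eq:Shepp1} and \eqref{eq:BLD} give
\[
\frac{\Psi_{d,\ell}(\vvx,\vvc)}{\calH(\A/\sqrt 2,\C/\sqrt 2)}
=\esp\bb{e^{-\frac1{\sqrt 2}\sum_k c_k\eta_{x_k}}\inddd{\argmin_{t}\eta_t\in(x_{\ell-1},x_\ell]}},
\]
so summing over $\ell$ and invoking Proposition \ref{prop:duality} yields \eqref{eq:conjecture1}. Your Fubini-plus-product-rule presentation is exactly this.

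Where your proposal is too loose is the ``additional claim'' that \eqref{eq:wt d0} and \eqref{eq:wt dl} can be rewritten as \eqref{eq:Phi d0} and \eqref{eq:Phi dl}. The paper does \emph{not} proceed by writing $\p_t(u,v)$ as a mixture of $\q^{(b)}$-densities followed by a quadratic change of variables; it invokes the explicit duality identity \cite[Proposition 4.4]{bryc23asymmetric} for the pair $(\p,\q)$. For $\Phi_{d,0}$ this is a direct application. For $\Phi_{d,\ell}$ with $1\le\ell\le d-1$ there is an extra step you are missing: one first performs the partial-fraction decomposition
\[
\frac{4(\C-s_\ell)(s_{\ell+1}-s_\ell)}{((\C-s_{\ell+1})^2+u_{\ell+1})((\C-2s_\ell+s_{\ell+1})^2+u_{\ell+1})}
=\frac1{(\C-s_{\ell+1})^2+u_{\ell+1}}-\frac1{(\C-2s_\ell+s_{\ell+1})^2+u_{\ell+1}},
\]
which splits $\Phi_{d,\ell}$ into two $(d-\ell)$-fold $\p$-integrals; the duality formula is then applied separately to each, producing $\Phi_{d,\ell,+}$ and $\Phi_{d,\ell,-}$. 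Your sketch (``mixture representation'' plus an unspecified change of variables) does not capture either ingredient and would not, as written, lead to \eqref{eq:Phi dl}.
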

\begin{remark}It would be nice to have a direct proof of \eqref{eq:conjecture1}, which can be viewed as an identity for the Laplace transform of the process $\eta\topp{\A,\C}$. We were hoping that it might be easier to prove the identity using \eqref{eq:Phi d0} and \eqref{eq:Phi dl} because of the following observation. The right-hand side are integrals involving transitional kernels $\q$ and $\q^{(b),*}$, the expressions of $\Phi_{d,\ell}$ we first obtained are in terms of kernels $\p$, and the expressions in \eqref{eq:Phi d0} and \eqref{eq:Phi dl} are in terms of $\q$. Hence in this view new expressions are {\em closer} to those expressions of $\Psi_{d,\ell}$.
However, we are still unable to prove \eqref{eq:conjecture1} directly, even with $d=2$. 
\end{remark}

We explain how to rewriting \eqref{eq:wt d0} and \eqref{eq:wt dl} by \eqref{eq:Phi d0} and \eqref{eq:Phi dl} respectively.
 For this purpose we will use a version of the general duality formula, namely \citep[Proposition 4.4]{bryc23asymmetric},  that was helpful in the fan region. The statement of this formula is notationally long and hence omitted.
 
 For $\Phi_{d,0}$ by duality formula and \eqref{eq:wt d0} we have
\equh\label{eq:Phi d0}
\Phi_{d,0}(\vvx,\vvc) = \frac{\A+\C}{\sqrt 2}\int_{\R^{d+1}}e^{-\summ k1d c_kz_k/\sqrt 2 + (\C-s_1)z_0/\sqrt 2 - \A z_d/\sqrt 2}\prodd k1d \q_{\Delta x_k}(z_{k-1},z_k)\d\vvz_{0:d}.
\eque
For $\Phi_{d,\ell}$, we first notice that in \eqref{eq:wt dl}, we can re-write 
\begin{multline*}
\frac{4(\C-s_\ell)(s_{\ell+1}-s_\ell)}{((\C-s_{\ell+1})^2+u_{\ell+1})((\C-2s_\ell+s_{\ell+1})^2+u_{\ell+1})} \\
= \frac1{(\C-s_{\ell+1})^2+u_{\ell+1}} - \frac1{(\C-2s_\ell+s_{\ell+1})^2+u_{\ell+1}}. 
\end{multline*}
We then write \eqref{eq:wt dl} as the difference of two $(d-\ell)$-multiple integrals, and apply the duality formula respectively. We end up with
\equh\label{eq:Phi dl}
\Phi_{d,\ell}(\vvx,\vvc) = \Phi_{d,\ell,+}(\vvx,\vvc)- \Phi_{d,\ell,-}(\vvx,\vvc),
\eque
with
\begin{align*}
\Phi_{d,\ell,+}(\vvx,\vvc)& = \frac{\A+\C}{\sqrt 2}\exp\pp{\frac14\summ k1\ell\Delta {x_k}(s_k-\C)^2}\\
& \quad \times \int_{\R_+^{d-\ell+1}}e^{-\sum_{k=\ell+1}^d c_kz_k/\sqrt 2 + (\C-s_{\ell+1})z_\ell/\sqrt 2 - \A z_d/\sqrt 2}\prod_{k=\ell+1}^d\q_{\Delta x_k}(z_{k-1},z_k)\d \vv z_{\ell:d},
\end{align*}
and\begin{align*}
\Phi_{d,\ell,-}(\vvx,\vvc)& = \frac{\A+\C}{\sqrt 2}\exp\pp{\frac14\summ k1\ell\Delta {x_k}(s_k-\C)^2}\\
& \quad \times\int_{\R_+^{d-\ell+1}}e^{- 
\sum\limits_{k=\ell+1}^d
 c_kz_k/\sqrt 2 + (\C+s_{\ell+1}-2s_\ell)z_\ell/\sqrt 2 - \A z_d/\sqrt 2}\prod_{k=\ell+1}^d\q_{\Delta x_k}(z_{k-1},z_k)\d \vv z_{\ell:d},\\
\end{align*}

For convenience, we also provide the identity we are unable to prove {\em by a direct calculation} when $d=2$ (see Proposition \ref{prop:duality} for the constraints on $\A,\C,\vvx,\vvc$). The identity of which we would like to have a direct proof is the following
\[
\Phi_{2,0}+\Phi_{2,1,+}-\Phi_{2,1,-}+\Phi_{2,2} = \Psi_{2,1}+\Psi_{2,2},
\]
with
\begin{align*}
\Phi_{2,0}(\vvx,\vvc) &= \frac{\A+\C}{\sqrt 2}\int_{\R_+^{3}}e^{-\summ k12 c_kz_k/\sqrt 2 + (\C-s_1)z_0/\sqrt 2 - \A z_2/\sqrt 2}\prodd k12 \q_{\Delta x_k}(z_{k-1},z_k)\d\vvz_{0:2},\\
\Phi_{2,1,+}(\vvx,\vvc)& = \frac{\A+\C}{\sqrt 2}e^{\Delta {x_1}(s_1-\C)^2/4} \int_{\R_+^{2}}e^{-c_2z_2/\sqrt 2 + (\C-s_2)z_1/\sqrt 2 - \A z_2/\sqrt 2}\q_{\Delta x_2}(z_{1},z_2)\d \vv z_{1:2},\\
\Phi_{2,1,-}(\vvx,\vvc)& = \frac{\A+\C}{\sqrt 2}e^{\Delta {x_1}(s_1-\C)^2/4}\int_{\R_+^{2}}e^{- c_2z_2/\sqrt 2 + (\C+s_2-2s_1)z_1/\sqrt 2 - \A z_2/\sqrt 2}\q_{\Delta x_2}(z_{1},z_2)\d \vv z_{1:2},\\
\Phi_{2,2}(\vvx,\vvc) &=   \frac{2\C-2s_2}{\C-2s_2-\A}e^{\summ k12 \Delta x_k(s_k-\C)^2/4},
\end{align*}
and
\begin{align*}
\Psi_{2,1}(\vvx,\vvc) &=\int_{-\infty}^0 e^{(\A+\C)b/\sqrt 2}\int_{(b,\infty)^2} e^{-c_1z_1/\sqrt 2}\q_{\Delta {x_1}}^{(b),*}(0,z_1)e^{-(\A+c_2)z_2/\sqrt 2}\q_{\Delta x_2}\topp b(z_1,z_2)\d z_1\d z_2\d b,\\
\Psi_{2,2}(\vvx,\vvc)& =\int_{-\infty}^0 e^{(\A+\C)b/\sqrt 2}\int_{(b,\infty)^2} e^{-c_1z_1/\sqrt 2}\q_{\Delta {x_1}}^{(b)}(0,z_1) e^{-(\A+c_2)z_2/\sqrt 2}\q_{\Delta x_2}^{(b),*}(z_1,z_2)\d z_1\d z_2\d b.
\end{align*}

\bibliographystyle{apalike}   
\def\cprime{$'$} \def\polhk#1{\setbox0=\hbox{#1}{\ooalign{\hidewidth \lower1.5ex\hbox{`}\hidewidth\crcr\unhbox0}}} \def\polhk#1{\setbox0=\hbox{#1}{\ooalign{\hidewidth \lower1.5ex\hbox{`}\hidewidth\crcr\unhbox0}}}

\begin{acks}
We would like to thank W\l odek Bryc and Alexey Kuznetsov for several helpful and stimulating
 discussions. 
\end{acks}

\end{document}